\newtheorem{theorem}{Theorem}[section]
\newtheorem{corollary}[theorem]{Corollary}
\newtheorem{proposition}[theorem]{Proposition}
\newtheorem{lemma}[theorem]{Lemma}
\theoremstyle{definition} 
\newtheorem{remark}[theorem]{Remark}
\DeclareMathOperator{\Ker}{Ker}
\newcommand{\N}{\mathbb{N}}
\newcommand{\Z}{\mathbb{Z}}
\newcommand{\R}{\mathbb{R}}
\newcommand{\T}{\mathbb{T}}
\title{The \L{}ojasiewicz-Simon inequality related to grain boundary
motion and its applications}
\author{Masashi Mizuno}
\address[Masashi Mizuno]%
{Department of Mathematics, College of Science
and Technology, Nihon University, Tokyo 101-8308 JAPAN}
\email{mizuno.masashi@nihon-u.ac.jp}
\author{Ayumi Sakiyama}
\address[Ayumi Sakiyama]%
{Nissay Information Technology Co., Ltd.}
\author{Keisuke Takasao}
\address[Keisuke Takasao]%
{Department of Mathematics, Graduate School of Science, Kyoto University, Kitashirakawa-Oiwakecho Sakyo Kyoto 606-8502, Japan}
\email{k.takasao@math.kyoto-u.ac.jp}
\keywords{\L{}ojasiewicz-Simon inequality, grain boundary motion}
\begin{document}

\begin{abstract}
In this paper, we study the \L{}ojasiewicz-Simon gradient inequality for
the mathematical model of grain boundary motion. We first derive a curve
shortening equation with time-dependent mobility, which guarantees the
energy dissipation law for the grain boundary energy, including the
difference between orientations of the constituent grains as a state
variable. Next, we discuss the \L{}ojasiewicz-Simon gradient inequality
for the grain boundary energy. Finally, we give applications of the
inequality to the energy.
\end{abstract}

\maketitle

\section{Introduction}
\label{sec:Introduction}
Let $\mathbb{T}=\R/\Z$ be a torus. We consider the following energy
functional:
\begin{equation}
 \label{eq:1.GBE}
 E[u,\alpha]
  =
  \sigma(\alpha)\int_{0}^{1}\sqrt{1+|u_{x}(x)|^{2}}dx.
\end{equation}
Here, $\alpha\in\R$ is a constant, $u:\mathbb{T}\rightarrow\R$ is a
function on $\T$, and $\sigma:\R\rightarrow\R$ is a function on $\R$.
Throughout this paper, we assume that there is a positive constant
$\Cl{const:GBE_positivity}>0$ such that
\begin{equation}
 \label{eq:1.Assumption_positivity}
 \sigma(\alpha)
  \geq
  \Cr{const:GBE_positivity}
\end{equation}
for $\alpha\in\R$.

The energy functional \eqref{eq:1.GBE} is related to the mathematical
model of grain boundary motion. \cite{MR4263432} proposed a mathematical
model of grain boundary motion that includes the evolution of
misorientations and triple junction mobility. Here, misorientation of a
grain boundary is the difference between the orientations of two grains
constituting the grain boundary. In this model, the normal velocity of
the grain boundary depends not only on its curvature but also on the
misorientation of its grain boundary. The energy dissipation by the
triple junction determines the angle at the edge point of the three
grain boundaries. Note that in \cite{MR1833000}, the misorientation
effect was included in the grain boundary energy as constant parameters.

Curve shortening flow, or mean curvature flow, is well-known as a
mathematical model of grain boundary motion (\cites{MR1770892,
MR0078836, doi:10.1063/1.1722742}). Although grain boundaries are formed
by the misorientation and orientation mismatch, the effect of the
misorientation on the grain boundary motion is not clear. To understand
the interaction of the misorientation with the grain boundary motion, we
derive a mathematical model for one grain boundary moved by its
curvature and misorientation. Let $u=u(x,t)$ be a representation of the
grain boundary (i.e., we regard $\{(x,u(x,t)):0\leq x\leq 1\}$ as the
grain boundary) and $\alpha=\alpha(t)$ be a misorientation on the grain
boundary. Similar to \cite{MR4263432}, we compute the energy dissipation
rate as
\begin{equation}
 \begin{split}
  \frac{d}{dt}E[u,\alpha]
  &=
  \sigma'(\alpha(t))
  \alpha_t(t)
  \int_0^1
  \sqrt{1+(u_x(x,t))^2}
  \,dx
  \\
  &\quad
  +
  \sigma(\alpha(t))
  \int_0^1
  \frac{u_x(x,t)}{\sqrt{1+(u_x(x,t))^2}}u_{xt}(x,t)
  \,dx \\
  &=
  \sigma'(\alpha(t))
  \alpha_t(t)
  \int_0^1
  \sqrt{1+(u_x(x,t))^2}
  \,dx
  \\
  &\quad
  -
  \sigma(\alpha(t))
  \int_0^1
  \left(
  \frac{u_x(x,t)}{\sqrt{1+(u_x(x,t))^2}}
  \right)_x
  u_{t}(x,t)
  \,dx \\
  &\quad
  +
  \sigma(\alpha(t))
  \left(
  \frac{u_x(1,t)}{\sqrt{1+(u_x(1,t))^2}}
  u_{t}(1,t)
  -
  \frac{u_x(0,t)}{\sqrt{1+(u_x(0,t))^2}}
  u_{t}(0,t)
  \right).
 \end{split}
\end{equation}
In order to ignore the triple junction effect, we assume the periodic
boundary condition at the grain boundary $u$. Then, we obtain
\begin{equation}
  \begin{split}
  \frac{d}{dt}E[u,\alpha]
  &=
  \sigma'(\alpha(t))
  \alpha_t(t)
  \int_0^1
  \sqrt{1+(u_x(x,t))^2}
  \,dx
  \\
  &\quad
  -
  \sigma(\alpha(t))
  \int_0^1
  \left(
   \frac{u_x(x,t)}{\sqrt{1+(u_x(x,t))^2}}
  \right)_x
  u_{t}(x,t)
  \,dx.
  \end{split}
\end{equation}
To guarantee the energy dissipation law of the following form with
positive constants $\mu$, $\gamma>0$
\begin{equation}
 \label{eq:1.EnergyLaw}
 \begin{split}
  \frac{d}{dt}E[u,\alpha](t)
  =
  -\frac{1}{\gamma}|\alpha_t(t)|^{2}
  -
  \frac{1}{\mu}
  \int_0^1
  \left|
  \frac{u_{t}(x,t)}{\sqrt{1+u_{x}^{2}(x,t)}}
  \right|^2
  \sqrt{1+u_{x}^{2}(x,t)}dx,
 \end{split}
\end{equation}
we obtain the system of differential equations
\begin{equation}
 \label{eq:1.GeomEvolEq}
  \left\{
   \begin{aligned}
    \frac{u_{t}(x,t)}{\sqrt{1+u_{x}^{2}(x,t)}}
    &=
    \mu\sigma(\alpha(t))\left(\frac{u_{x}(x,t)}{\sqrt{1+u_{x}^{2}(x,t)}}\right)_{x},
    \\
    \alpha_t(t)
    &=-\gamma\sigma'(\alpha(t))\int_{0}^{1}\sqrt{1+u_{x}^{2}(x,t)}dx.
   \end{aligned}
  \right.
\end{equation}
In other words, \eqref{eq:1.GeomEvolEq} is a sufficient condition to
obtain \eqref{eq:1.EnergyLaw} with the periodic boundary condition. Note
that we may obtain other systems if we consider a different type of energy
law or energy functional.

We are interested in a sufficient condition for obtaining the
\L{}ojasiewicz-Simon gradient inequality and analyzing long-time
behavior of solutions of \eqref{eq:1.GeomEvolEq}. If $\sigma$ has the
following convexity condition
\begin{equation}
 \label{eq:1.ConvexityAssumption}
 \alpha\sigma'(\alpha)\geq0,\qquad \alpha\in\R,
\end{equation}
then long-time behavior of solutions to \eqref{eq:1.GeomEvolEq} was
studied by \cite{MR4292952}. The convexity assumption
\eqref{eq:1.ConvexityAssumption} guarantees the maximum principle for
$\alpha$. However, from the viewpoint of grain boundary motion, $\sigma$
should be periodic with respect to $\alpha$. For instance, as shown in
Figure \ref{fig:1.GB-Orientation}, consider a grain boundary composed of
two grains with different lattice angles.  Even if one of the grains
rotates by an angle of $\frac{\pi}2$, the structure of the grain
boundary does not change. This means that the grain boundary energy
density $\sigma$ should be periodic with respect to $\alpha$, so we may
not assume the convexity assumption \eqref{eq:1.ConvexityAssumption}(See
also \cite{yang2025curvatureflownetworkstriple}). Furthermore, as
discussed in \cites{MR4263432, MR4283537}, a typical example for grain
boundary energy density $\sigma$ is trigonometric functions. Thus, we
generally do not assume the convexity condition
\eqref{eq:1.ConvexityAssumption} from the viewpoint of applications to
grain boundary motion. Therefore, we want to study long-time asymptotics
for solutions of \eqref{eq:1.GeomEvolEq} without assuming the convexity
condition \eqref{eq:1.ConvexityAssumption}. In this paper, we study
long-time behavior of \eqref{eq:1.GeomEvolEq} by establishing the
\L{}ojasiewicz-Simon gradient inequality for the grain boundary energy
\eqref{eq:1.GBE}.

We note that, from the mathematical model of grain boundary motion,
$\alpha=0$ means that the grain boundary is not formed and the grain
boundary energy should vanish. Thus, it is natural to assume that
$\sigma(0)=0$. The main results of this study cannot address this
consideration, but when \eqref{eq:1.Assumption_positivity} is not
assumed, then exponential decay of the solution cannot be expected as in
\cite{MR4292952} in general. Therefore, we consider the
\L{}ojasiewicz-Simon inequality that can handle the algebraic decay of
solutions for a broad class of partial differential equations.
On the other hand, as suggested in \cite{MR4292952}*{(2.10)}, the grain
boundary energy density $\sigma$ may be depending not only on the
misoerientation $\alpha$ but also depending on the unit normal vector of
the graph of $u$. Furthermore, \cite{MR4263432}*{(2.17)} suggested the
triple junction drag, which is essentially same as the dynamic boundary
problem to \eqref{eq:1.GeomEvolEq}.
In the theory of partial differential equations, the case of higher
space dimensions and other boundary conditions, such as the Dirichlet or
the Neumann boundary condition, is an interesting extension of the
problem \eqref{eq:1.GeomEvolEq}. In this paper, as a first step, we
assume that $\sigma$ depends only on $\alpha$ and satisfies
\eqref{eq:1.Assumption_positivity}, find suitable function spaces
containing periodic functions to derive the \L{}ojasiewicz-Simon
inequality for the energy \eqref{eq:1.GBE}, and study its applications.

\begin{figure}
 \includegraphics[bb=0 0 499 163,width=.9\textwidth]{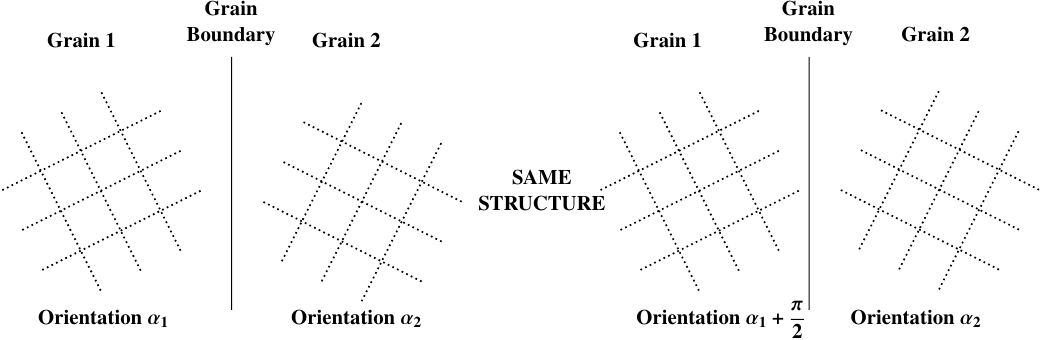}
 \label{fig:1.GB-Orientation}
 \caption{The left figure is a schematic of two grains (Grain 1, Grain
2) and a single grain boundary. The right figure is the one with
$\frac{\pi}{2}$ rotation for the crystal lattice of Grain 1. As can be
seen, the structure of the grain boundary does not change by
$\frac{\pi}{2}$ rotation, so the grain boundary energy density $\sigma$
should be periodic in misorientation.  }
\end{figure}

The organization of this paper is as follows. In section \ref{sec:LS},
we introduce the \L{}ojasiewicz-Simon inequality on abstract function
spaces by Yagi~\cite{MR4274456}, Chill~\cite{MR1986700}, and apply their
results to the grain boundary energy~\eqref{eq:1.GBE}. Applications of
the \L{}ojasiewicz-Simon inequality to the evolution of many grains can
be found in \cites{MR3501845, MR4753074}. In this paper, we provide full
details for deriving the \L{}ojasiewicz-Simon inequality for a single
grain boundary. In section \ref{sec:Application}, we analyze long-time
asymptotic behavior of solutions of the system \eqref{eq:1.GeomEvolEq}
by applying the \L{}ojasiewicz-Simon inequality. Further, we explain the
relationships among grain boundary length, curvature, and
misorientation.

\section{The \L ojasiewicz-Simon inequality}
\label{sec:LS}

In this section, we will derive the \L ojasiewicz-Simon inequality for
the grain boundary energy functional \eqref{eq:1.GBE}. To derive the
inequality, we first present Yagi's abstract theory of the
\L{}ojasiewicz-Simon inequality \cite{MR4274456}. Next, we investigate a
suitable function space for the grain boundary energy \eqref{eq:1.GBE} in order
to apply the abstract theory. Finally, we state and give a proof of the
\L ojasiewicz-Simon inequality to the functional \eqref{eq:1.GBE}.

\subsection{Abstract theory of the \L ojasiewicz-Simon inequality}
In \cite{MR4274456}, Yagi gave an abstract theory of the
\L{}ojasiewicz-Simon inequality. Here, we present the sufficient
condition to derive the \L ojasiewicz-Simon inequality in abstract
function spaces.

To state the theorem, we give some notation. Let $X$ be a real Hilbert
space with inner product $(\cdot, \cdot)_X$. The function
$\Phi:X\rightarrow\R$ is continuously differentiable if for each $f\in
X$, there exists $\dot{\Phi}(f)\in X$ such that
\begin{equation}
 \Phi(f+h)
  -
  \Phi(f)
  -
  (\dot{\Phi}(f),h)_X
  =
  o(\|h\|_X),
  \quad
  \text{as}
  \
  h\rightarrow0\
  \text{in}\ X
\end{equation}
and
\begin{equation}
 X\ni f\mapsto
  \dot{\Phi}(f)
\in X\
\text{is continuous from}\
X\
\text{into itself.}
\end{equation}
The vector $\dot{\Phi}(f)$ is called the Fr\'{e}chet derivative of $\Phi$
at $f$. A vector $\overline{u}\in X$ is called a critical point of
$\Phi$ if $\dot{\Phi}(\overline{u})=0$.

More generally, for two real Banach spaces $X$, $Y$, and for a map $F$
from $X$ to $Y$, we call $F$ Fr\'{e}chet differentiable at $u\in X$ if
there exists bounded linear operator $F'(u):X\rightarrow Y$ such that
\begin{equation}
 F(u+h)
  -
  F(u)
  -
  F'(u)h
  =
  o(\|h\|_X)
  \quad
  \text{as}\
  h\rightarrow0\
  \text{in}\
  X.
\end{equation}
The map $F$ is called G\^{a}teaux differentiable at $u\in X$ if
$\R\ni\theta\mapsto F(u+\theta h)\in Y$ is differentiable at $\theta=0$
for any $h\in X$. We denote
\begin{equation}
 DF(u,h):=\frac{d}{d\theta}\bigg|_{\theta=0}F(u+\theta h)
\end{equation}
and call $DF(u,\cdot)$ the G\^{a}teaux derivative of $F$ at $u$. A
bounded linear operator $T: X\rightarrow Y$ is called a Fredholm
operator if its kernel is a subspace of $X$ of finite dimension and its
range is a closed subspace of $Y$ of finite codimension.
See \cite{MR0223930} about the Fredholm operator, for instance.

\begin{theorem}[Theorem 3.1 in {\cite{MR4274456}}]
 \label{thm:LS.LS} Let $X$ be a real Hilbert space. Let
 $\Phi:X\rightarrow\R$ be a continuously differentiable function with
 the derivative $\dot{\Phi}:X\rightarrow X$, and let $\bar{u}$ be its
 critical point. Assume that $u\mapsto\dot{\Phi}(u)$ is G\^{a}teaux
 differentiable at $\bar{u}$ with derivative
 $L=D\dot{\Phi}(\bar{u},\cdot)$ and $L$ is a Fredholm operator of
 $X$. Assume also that there exists a Banach space $Y$ satisfying the
 following five conditions:
 \begin{equation}
  \label{eq:LS.LS.Abstract_Assumption}
  \begin{aligned}
   & Y\subset X\
   \text{with dense and continuous embedding;}
   \\
   & \bar{u}\in Y;
   \\
   &L^{-1}(Y)\subset Y,\
   \text{namely, if}\
   Lu\in Y,\ \text{then}\ u\in Y;
   \\
   &\dot{\Phi}\
   \text{maps}\ Y\
   \text{into itself;}
   \\
   &\dot{\Phi}:Y\rightarrow Y\
   \text{is continuously Fr\'{e}chet differentiable in}
   \ Y.
  \end{aligned}
 \end{equation}
 Define the critical
 manifold $S$ as
 \begin{equation}
  S:=
   \{u\in Y:L(\dot{\Phi}(u))=0\}
 \end{equation}
 and assume that $\Phi$ is analytic on $S$. Then, in a
 neighborhood $U$ of $\bar{u}$ in $Y$, $\Phi(u)$ satisfies the gradient
 inequality
 \begin{equation}
  \|\dot{\Phi}(u)\|_Y
   \geq
   \Cr{const:LS_abstract}
   |\Phi(u)-\Phi(\bar{u})|^{1-\theta}
 \end{equation}
 for all $u\in U$, where $0<\theta\leq\frac{1}{2}$ and
 $\Cl{const:LS_abstract}>0$ is a constant.
\end{theorem}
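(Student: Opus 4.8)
The plan is to follow the classical \L{}ojasiewicz--Simon scheme (Simon's method as refined by Chill~\cite{MR1986700}, which is precisely what Yagi~\cite{MR4274456} carries out): perform a Lyapunov--Schmidt reduction based on the Fredholm property of $L$ to reduce the gradient inequality to the finite--dimensional \L{}ojasiewicz inequality, and then transport that reduced inequality back to $\Phi$ using the regularity hypotheses on $\dot\Phi$ in $Y$.

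First I would set up the splitting. Since $\dot\Phi$ is a gradient, $L=D\dot\Phi(\bar u,\cdot)$ is symmetric, hence --- being bounded and Fredholm --- self-adjoint of index $0$, so $X=N\oplus R$ with $N:=\Ker L$ finite-dimensional and $R:=\Ran L=N^{\perp}$ closed; let $P,Q=I-P$ be the orthogonal projections onto $N,R$. I would note that the hypothesis $L^{-1}(Y)\subset Y$ forces $N\subset Y$ (apply it to $L0=0\in Y$) and makes $L$ restrict to a bijection of the closed subspace $R\cap Y$ of $Y$, hence an isomorphism of $R\cap Y$ by the open mapping theorem; in particular $P$ and $Q$ preserve $Y$. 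Then I would apply the implicit function theorem to $G(\xi,v):=Q\dot\Phi(\bar u+\xi+v)$ on $N\times(R\cap Y)$ --- legitimate since $G(0,0)=0$, $D_{v}G(0,0)=L|_{R\cap Y}$ is invertible, and $\dot\Phi:Y\to Y$ is $C^{1}$ --- to produce a $C^{1}$ map $\xi\mapsto\Gamma(\xi)\in R\cap Y$ near $0$ with $\Gamma(0)=0$, $D\Gamma(0)=0$ and $Q\dot\Phi(\bar u+\xi+\Gamma(\xi))=0$. Since $Q\dot\Phi(u)=0\Leftrightarrow\dot\Phi(u)\in N\Leftrightarrow L\dot\Phi(u)=0$, the graph $m(\xi):=\bar u+\xi+\Gamma(\xi)$ then coincides with $S$ near $\bar u$.

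Next I would form the reduced function $g(\xi):=\Phi(m(\xi))$, which is real-analytic near $0$ in $N\cong\R^{d}$ by the assumption that $\Phi$ is analytic on $S$. Differentiating and using $D\Gamma(\xi)\eta\in R$ together with $Q\dot\Phi(m(\xi))=0$, I would get $\nabla g(\xi)=P\dot\Phi(m(\xi))$ (identifying $N^{*}$ with $N$ via $(\cdot,\cdot)_{X}$), so the finite--dimensional \L{}ojasiewicz gradient inequality supplies $\theta\in(0,\tfrac12]$ and $c_{0}>0$ with $\|P\dot\Phi(m(\xi))\|_{X}\geq c_{0}|\Phi(m(\xi))-\Phi(\bar u)|^{1-\theta}$ for small $\xi$. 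To reach an arbitrary $u$ near $\bar u$ in $Y$ I would write $\xi:=P(u-\bar u)$, $v:=Q(u-\bar u)$, $w:=v-\Gamma(\xi)\in R\cap Y$, so $u=m(\xi)+w$, and use the $C^{1}$-regularity of $\dot\Phi:Y\to Y$ (with derivative $L$ at $\bar u$) together with the vanishing of $Q\dot\Phi$ on $S$ to obtain, after shrinking the neighbourhood,
\begin{equation}
 \|w\|_{Y}\leq c_{1}\|Q\dot\Phi(u)\|_{Y}\leq c_{1}\|\dot\Phi(u)\|_{Y},
 \qquad
 \|P\dot\Phi(u)-P\dot\Phi(m(\xi))\|_{Y}\leq c_{2}\|w\|_{Y}.
\end{equation}
These yield $\|P\dot\Phi(m(\xi))\|_{X}\leq C\|\dot\Phi(u)\|_{Y}$ (using boundedness of $P$ on $Y$, equivalence of norms on $N$, and $Y\hookrightarrow X$). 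Finally a first-order Taylor expansion of $\Phi$ along the segment from $m(\xi)$ to $u$, together with the local Lipschitz bound for $\dot\Phi:Y\to Y$, gives $|\Phi(u)-\Phi(m(\xi))|\leq C(\|\dot\Phi(u)\|_{Y}\|w\|_{Y}+\|w\|_{Y}^{2})\leq C'\|\dot\Phi(u)\|_{Y}^{2}$, and combining everything with the subadditivity of $t\mapsto t^{1-\theta}$, the bound $2(1-\theta)\geq 1$, and the smallness of $\|\dot\Phi(u)\|_{Y}$ on $U$ one gets
\begin{equation}
 |\Phi(u)-\Phi(\bar u)|^{1-\theta}
 \leq
 |\Phi(m(\xi))-\Phi(\bar u)|^{1-\theta}+|\Phi(u)-\Phi(m(\xi))|^{1-\theta}
 \leq
 C\|\dot\Phi(u)\|_{Y},
\end{equation}
which is the asserted inequality.

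The hard part, and the reason the five structural conditions in \eqref{eq:LS.LS.Abstract_Assumption} are imposed, is that the target inequality is measured in the $Y$-norm whereas the Fredholm and spectral structure of $L$ lives in $X$, so the Lyapunov--Schmidt reduction and every estimate above must be carried out at the level of $Y$. Concretely, $L^{-1}(Y)\subset Y$ is what gives $\Ker L\subset Y$ and the isomorphism $L|_{R\cap Y}$ needed both for the implicit function theorem and for the lower bound on $\|Q\dot\Phi(u)\|_{Y}$, while the $C^{1}$-regularity of $\dot\Phi:Y\to Y$ powers that implicit function theorem and the Taylor estimate; the analyticity of $\Phi$ on $S$ is used only to invoke the finite--dimensional \L{}ojasiewicz inequality, which enters as a black box providing the exponent $\theta$.
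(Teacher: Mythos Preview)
The paper does not prove this statement: Theorem~\ref{thm:LS.LS} is quoted verbatim from Yagi~\cite{MR4274456}*{Theorem~3.1} and used as a black box, with only the paragraph after the statement explaining how analyticity on $S$ is to be understood via the local parametrisation of $S$ over $\Ker L$. Your sketch is a correct outline of the Lyapunov--Schmidt proof that Yagi (following Chill~\cite{MR1986700}) actually gives: the self-adjointness of $L$, the splitting $X=\Ker L\oplus\Ran L$, the use of $L^{-1}(Y)\subset Y$ to obtain $\Ker L\subset Y$ and the isomorphism $L|_{\Ran L\cap Y}$, the implicit function theorem in $Y$ producing $S$ as a graph, the reduction to the finite-dimensional \L{}ojasiewicz inequality for $g(\xi)=\Phi(m(\xi))$, and the transport back to $\Phi$ via Taylor expansion --- all of this matches the argument in the cited reference, so there is nothing to correct.
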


We note the analyticity of $\Phi$ on the critical manifold $S$. We refer to
\cite{MR4274456}*{\S 3.3} for a precise explanation. Let $P$ be the
orthogonal projection from $X$ to $\Ker L$. By the implicit function
theorem, for any $\overline{u}\in S$, there are neighborhoods $U_0$,
$U_1$ of $P\overline{u}$, $(I-P)\overline{u}$ in $\Ker L$, $L(Y)$
respectively and a map $g:U_0\rightarrow U_1$ such that
\begin{equation}
 S\cap U
  =
  \{u_0+g(u_0):u_0\in U_0\}.
\end{equation}
Since $L$ is a Fredholm operator, $\Ker L$ is a finite-dimensional
subspace, thus we can take a basis $v_1,\ldots,v_N$ of $\Ker L$. Then,
we can identify $\Ker L$ with $\R^N$ by the correspondence
\begin{equation}
 u_0=\sum_{k=1}^N\xi_kv_k \in \Ker L
  \longleftrightarrow
  (\xi_1,\ldots,\xi_N)\in\R^N.
\end{equation}
Then $\Phi$ is analytic on $S\cap U$ if for any $\overline{u}\in S$,
\begin{equation}
 (\xi_1,\ldots,\xi_N)\in\Omega
  \mapsto
  \Phi
  \left(
  \sum_{k=1}^N\xi_kv_k
  +
  g\left(
   \sum_{k=1}^N\xi_kv_k
   \right)
  \right)
\end{equation}
is analytic, where $\Omega\subset\R^N$ is an open neighborhood
corresponding to $U_0$.

\subsection{Function spaces and the \L ojasiewicz-Simon inequality}

We next investigate the function space in order to apply the abstract
theory of the \L ojasiewicz-Simon inequality to the energy functional
$E$.

Let $X_u$ be a Hilbert space for the function $u$ of the energy $E$. We
will find the suitable domain $X=X_u\times\R$ of the grain boundary
energy $E$. Since $E$ is the integral of $u_x$, $X_u$ should be a subset
of the Sobolev space $H^1(0,1)$. In addition, since $u$ is a function of
$\T$, we should impose $u_x\in L^2(\T)$.

The first equation of \eqref{eq:1.GeomEvolEq} can be turned into a
divergence form, namely
\begin{equation}
 u_{t}(x,t)
  =
  \mu\sigma(\alpha(t))(\arctan(u_{x}(x,t)))_{x}.
\end{equation}
By integrating with respect to $x\in(0,1)$, we have for any solution
$(u,\alpha)$ of \eqref{eq:1.GeomEvolEq}
\begin{equation}
 \frac{d}{dt}
  \int_0^1
  u(x,t)
  \,dx
  =
  \mu\sigma(\alpha(t))
  \int_0^1
  (\arctan(u_{x}(x,t)))_{x}
  \,dx
  =0,
\end{equation}
which gives the conservation of the integral of $u$ along the flow
\eqref{eq:1.GeomEvolEq}. Adding a constant to the initial data implies
the parallel transform along the $y$ axis, and the shape of the curve
$\{(x,u(x,t)):x\in(0,1)\}$ does not change. Thus, we define the
following function spaces for $k=1,2$,
\begin{align*}
 H_{\mathrm{per.ave}}^{k}(0,1)
 :=
 \left\{u\in H^{k}(0,1) 
 :
 u_{x}\in H^{k-1}(\T),\int_{0}^{1}u(x)dx=0\right\}.
\end{align*}

\begin{remark}
 We emphasize that we \emph{do not} need to impose $u\in L^2(\T)$ since
 $u$ itself does not appear in the definition of $E$. In other words, we
 study the \L{}ojasiewicz-Simon inequality on a wider class than the
 class of the original functional \eqref{eq:1.GBE}. Note that even if
 $u_x$ is a periodic function on $[0,1)$, the primitive $u$ is not a
 periodic function on $[0,1)$ in general. For instance, let $u_x(x)=1$
 for $x\in[0,1)$. Then $u(x)$ can be written explicitly as
 $u(x)=u(0)+x$, which is not a periodic function on $x\in [0,1)$.

 Furthermore, to apply the problem \eqref{eq:1.GeomEvolEq}, we can
 deduce spatial periodicity of solutions $u$ of \eqref{eq:1.GeomEvolEq}
 if $u(x,0)$ and $u_x(x,t)$ are periodic functions with respect to $x$
 for all $t>0$. To prove this, we note that
 \begin{equation}
  \label{eq:LS.Integral_representation}
   u(x,t)
   =
   u(x,0)
   +
   \int_0^t
   u_t(x,\tau)\,d\tau
   =
   u_0(x)
   +
   \mu\sigma(\alpha(t))
   \int_0^t
   (\arctan(u_{x}(x,\tau)))_{x}
   \,d\tau.
 \end{equation}
 If $u_0(x)$ and $u_x(x,t)$ are spatially periodic functions on $[0,1)$,
 then the right-hand side of \eqref{eq:LS.Integral_representation} is
 also spatially periodic, hence $u(x,t)$ is also spatially periodic for all
 $t>0$.
\end{remark}

Next, we define Hilbert spaces $X$ and $Y$ as
 \begin{equation}
  \label{eq:LS.Function_Spaces}
  X:=H_{\mathrm{per.ave}}^{1}(0,1)\times\R,\quad
  Y:=H_{\mathrm{per.ave}}^{2}(0,1)\times\R.
 \end{equation}
For $(u_{1},\alpha_{1})$, $(u_{2},\alpha_{2})\in X$, and $\gamma\in\R$,
define the addition and scalar multiplication on $X$ by
\begin{equation*}
 (u_{1},\alpha_{1})+(u_{2},\alpha_{2})
 =
 (u_{1}+u_{2},\alpha_{1}+\alpha_{2}),\qquad
 \gamma(u_{1},\alpha_{1})
  =
  (\gamma u_{1},\gamma\alpha_{1}).
\end{equation*}
Since we can apply the Poincar\'e inequality in $X$, we may define the
inner product on $X$ as
\begin{equation}
 \label{eq:LS.def_InnerProduct}
 ((u,\alpha),(v,\beta))_{X}
 =\int_{0}^{1}u_{x}(x)v_{x}(x)dx+\alpha\beta
\end{equation}
for $(u,\alpha)$, $(v,\beta)\in X$. Similarly, we define the inner product on $Y$ for $(u,\alpha)$, $(v,\beta)\in Y$ as
\begin{equation}
 \label{eq:LS.def_InnerProduct_Y}
 ((u,\alpha),(v,\beta))_{Y}
 =
 \int_{0}^{1}u_{x}(x)v_{x}(x)dx
 +
 \int_{0}^{1}u_{xx}(x)v_{xx}(x)dx
 +
 \alpha\beta.
\end{equation}

\begin{remark}
 It is easy to show that for any $f\in H^{1}(0,1)$
 \begin{equation}
  \label{eq:LS.Sobolev_Embedding1}
  \sup_{x\in(0,1)}\left|f(x)-\int_{0}^{1}f(y)dy\right|
  \leq\|f_{x}\|_{L^{2}(0,1)}.
 \end{equation}
 In fact, for any $x\in(0,1)$
 \begin{equation}
  f(x)-\int_{0}^{1}f(y)dy
  =
  \int_{0}^{1}(f(x)-f(y))dy
  =
  \int_{0}^{1}\int_{y}^{x}f_{x}(z)dzdy
 \end{equation}
 hence we obtain by the H\"older inequality that
 \begin{equation}
  \begin{split}
   \left|f(x)-\int_{0}^{1}f(y)dy\right|
   &\leq
   \left|\int_{0}^{1}\left|
   \int_{y}^{x}|f_{x}(z)|dz
   \right|dy\right|
   \\
   &\leq
   \left|\int_{0}^{1}\left|\int_{0}^{1}|f_{x}(z)|dz\right|dy\right|
   \leq
   \|f_{x}\|_{L^{2}(0,1)}.
  \end{split}
 \end{equation}
 From \eqref{eq:LS.Sobolev_Embedding1}, for $u\in
 H^1_{\mathrm{per.ave}}(0,1)$,
 \begin{equation}
  \label{eq:LS.Sobolev_Embedding_X}
  \sup_{x\in(0,1)}\left|u(x)\right|
  \leq
  \|u_{x}\|_{L^{2}(0,1)}
 \end{equation}
 hence \eqref{eq:LS.def_InnerProduct} gives an inner product on $X$.

 Furthermore, for $u\in H^2_{\mathrm{per.ave}}(0,1)$, we can use
 \eqref{eq:LS.Sobolev_Embedding1} and
 \begin{equation}
  \begin{split}
   \left|u_x(x)\right|
   &\leq
   \left|u_x(x)-\int_{0}^{1}u_x(y)dy\right|
   +
   \left|\int_{0}^{1}u_x(y)dy\right|
   \\
   &\leq
   \|u_{xx}\|_{L^{2}(0,1)}+\left|\int_{0}^{1}u_x(y)dy\right|
   \le\|u_{xx}\|_{L^{2}(0,1)}+\|u_x\|_{L^{2}(0,1)}
  \end{split}
 \end{equation}
 for $x\in(0,1)$. Thus, for $u\in H^2_{\mathrm{per.ave}}(0,1)$, we
 obtain
 \begin{equation}
  \label{eq:LS.Sobolev_Embedding_Y}
   \sup_{x\in(0,1)}\left|u_x(x)\right|
   \leq
   \|u_{xx}\|_{L^{2}(0,1)}+\|u_x\|_{L^{2}(0,1)}
   \leq 
   \sqrt{2}
   \|u\|_{H_{\mathrm{per.ave}}^{2}(0,1)}.
 \end{equation}
\end{remark}

Now we are in a position to state the main theorem in this section.

\begin{theorem}
 \label{thm:Lojasiewicz-Simon}
 Let $\sigma$ be an analytic function on $\R$. Let $X$, $Y$ be Hilbert
 spaces defined by \eqref{eq:LS.Function_Spaces}. Assume
 \eqref{eq:1.Assumption_positivity}, namely there exists
 $\Cr{const:GBE_positivity}>0$ such that $\sigma(\alpha)\ge
 \Cr{const:GBE_positivity}$ for all $\alpha\in\R$. Let
 $(\overline{u},\overline{\alpha})\in Y$ be a critical point of
 $E$. Then, there exist a neighborhood $U$ of
 $(\overline{u},\overline{\alpha})$ in $Y$ and constants
 $0<\theta\leq\frac{1}{2}$, $\Cl{const:LS_GBE}>0$ such that
 \begin{equation}
  \label{eq:LS.LS_for_GBE}
  \|\dot{E}(u,\alpha)\|_{Y}
  \ge
  \Cr{const:LS_GBE}|E[u,\alpha]-E[\overline{u},\overline{\alpha}]|^{1-\theta}
 \end{equation}
 for all $(u,\alpha)\in U$.
 Here, the Fr\'{e}chet derivative $\dot{E}$ is explicitly written as
 \begin{equation}
  \begin{split}
   &\quad\dot{E}(u,\alpha)
   \\
   &=
   \left(
   \sigma(\alpha)
   \left\{
   \int_{0}^{x}\frac{u_{x}(y)}{\sqrt{1+u_{x}^{2}(y)}}dy-\int_{0}^{1}\int_{0}^{x}\frac{u_{x}(y)}{\sqrt{1+u_{x}^{2}(y)}}dydx
   \right\},
   \sigma'(\alpha)\int_{0}^{1}\sqrt{1+u_{x}^{2}(x)}\,dx
   \right).
  \end{split}
 \end{equation}
\end{theorem}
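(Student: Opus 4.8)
The plan is to verify the hypotheses of the abstract Theorem~\ref{thm:LS.LS} for $\Phi = E$ with the Hilbert spaces $X$, $Y$ of \eqref{eq:LS.Function_Spaces}, and then invoke it. It is convenient to set $q(t) := t/\sqrt{1+t^2}$; this function is real-analytic on $\R$, all of its derivatives are bounded on $\R$, and $q(t)=0$ if and only if $t=0$. First I would record that $E$ is continuously differentiable on $X$ with the stated derivative: expanding $E[u+h,\alpha+k]$ and using that the second $b$-derivative of $\sqrt{1+(a+b)^2}$ is bounded by $1$ together with the smoothness of $\sigma$, the first variation of $E$ at $(u,\alpha)$ in a direction $(h,k)\in X$ equals $\sigma'(\alpha)k\int_0^1\sqrt{1+u_x^2}\,dx+\sigma(\alpha)\int_0^1 q(u_x)\,h_x\,dx$ with remainder $o(\|(h,k)\|_X)$. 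Since the mean-zero primitive of any $w\in L^2(0,1)$ belongs to $H^1_{\mathrm{per.ave}}(0,1)$ and has derivative $w$, requiring this variation to coincide with $(\dot E(u,\alpha),(h,k))_X$ for the inner product \eqref{eq:LS.def_InnerProduct} forces $\dot E(u,\alpha)$ to be exactly the pair displayed in the statement; its first component lies in $H^1_{\mathrm{per.ave}}(0,1)$ because $(\dot E_1)_x=\sigma(\alpha)q(u_x)\in L^2(0,1)$ and $\int_0^1\dot E_1=0$ by construction. Continuity of $\dot E:X\to X$ is immediate from $|q'|\le 1$, from $|\sqrt{1+t_1^2}-\sqrt{1+t_2^2}|\le|t_1-t_2|$, and from the continuity of $\sigma$ and $\sigma'$.

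Next I would identify the linearization at a critical point $(\overline u,\overline\alpha)$. Because $\sigma\ge\Cr{const:GBE_positivity}>0$, the vanishing of the first component of $\dot E(\overline u,\overline\alpha)$ gives $q(\overline u_x)\equiv 0$, hence $\overline u_x\equiv 0$ and $\overline u=0$; then the vanishing of the second component forces $\sigma'(\overline\alpha)=0$. Computing the G\^ateaux derivative $L=D\dot E((\overline u,\overline\alpha),\cdot)$ at $\overline u=0$, the contributions that are quadratic in $u$ — arising from $q(\theta v_x)=\theta v_x+O(\theta^3)$ and from $\sqrt{1+\theta^2v_x^2}=1+O(\theta^2)$ — vanish, and using $\int_0^1 v=0$ one obtains $L(v,\beta)=(\sigma(\overline\alpha)v,\ \sigma''(\overline\alpha)\beta)$. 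Since $\sigma(\overline\alpha)\ne 0$, $L$ is the direct sum of an isomorphism of $H^1_{\mathrm{per.ave}}(0,1)$ and multiplication by the scalar $\sigma''(\overline\alpha)$ on $\R$; hence $L$ is a Fredholm operator of index $0$, with $\Ker L$ equal to $\{0\}$ or to $\{0\}\times\R$ and $\Ran L$ equal to $X$ or to $H^1_{\mathrm{per.ave}}(0,1)\times\{0\}$ according as $\sigma''(\overline\alpha)\ne 0$ or $\sigma''(\overline\alpha)=0$.

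Then I would check the five compatibility conditions in \eqref{eq:LS.LS.Abstract_Assumption}. The embedding $Y\subset X$ is dense and continuous, and $(\overline u,\overline\alpha)\in Y$ by hypothesis. If $L(v,\beta)=(\sigma(\overline\alpha)v,\sigma''(\overline\alpha)\beta)\in Y$ then $v\in H^2_{\mathrm{per.ave}}(0,1)$ since $\sigma(\overline\alpha)\ne 0$, so $L^{-1}(Y)\subset Y$. Moreover $\dot E$ maps $Y$ into itself: for $u\in H^2_{\mathrm{per.ave}}(0,1)$ one has $u_x\in H^1(\T)\hookrightarrow C(\T)$ with $u_x(0)=u_x(1)$, so $q(u_x)\in H^1(\T)$ because $(q(u_x))_x=q'(u_x)u_{xx}\in L^2(0,1)$ with $q'$ bounded, and therefore $\dot E_1(u,\alpha)\in H^2_{\mathrm{per.ave}}(0,1)$. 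The remaining condition, that $\dot E:Y\to Y$ be continuously Fr\'echet differentiable, is the technical core: composing with the analytic maps $\alpha\mapsto\sigma(\alpha)$ and $\alpha\mapsto\sigma'(\alpha)$, with the bounded linear operator $\partial_x$, and with the bounded linear operator sending $w\in H^1(\T)$ to its mean-zero primitive in $H^2_{\mathrm{per.ave}}(0,1)$, it reduces to the statement that the superposition operator $w\mapsto q(w)$ is $C^1$ — indeed $C^\infty$ — from $H^1(\T)$ into itself (together with the easier fact that $w\mapsto\int_0^1\sqrt{1+w(x)^2}\,dx$ is $C^1$ on $L^2(0,1)$); this holds because $q$ is smooth with globally bounded derivatives of every order and $H^1(\T)$ is a Banach algebra continuously embedded in $C(\T)$, the required Fr\'echet-remainder bound and the continuity of the derivative following by inserting the pointwise second-order Taylor expansions of $q$ and of $q'$ into the $H^1$-norm and estimating the resulting products with the Banach-algebra inequality and the $C(\T)$-bound. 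I expect this $C^1$-regularity of $\dot E$ on $Y$ to be the main obstacle; everything else is elementary or a direct consequence of $\sigma\ge\Cr{const:GBE_positivity}>0$.

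Finally, since $L(\dot E(u,\alpha))=(\sigma(\overline\alpha)\dot E_1(u,\alpha),\ \sigma''(\overline\alpha)\dot E_2(u,\alpha))$ and $\sigma(\overline\alpha)\ne 0$, every $(u,\alpha)$ in the critical manifold $S$ satisfies $\dot E_1(u,\alpha)=0$, whence $q(u_x)\equiv 0$ and $u=0$; thus $S\subset\{0\}\times\R$ and $E[u,\alpha]=E[0,\alpha]=\sigma(\alpha)$ on $S$, which is analytic in $\alpha$, so $\Phi=E$ is analytic on $S$. With all hypotheses of Theorem~\ref{thm:LS.LS} verified, it yields the gradient inequality \eqref{eq:LS.LS_for_GBE} (with $\theta=\tfrac12$ when $\sigma''(\overline\alpha)\ne 0$, in which case $L$ is invertible).
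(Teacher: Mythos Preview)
Your proposal is correct and follows the same overall strategy as the paper: verify all hypotheses of the abstract Theorem~\ref{thm:LS.LS} for $\Phi=E$ on the spaces $X,Y$, identify the critical points as $(0,\overline\alpha)$ with $\sigma'(\overline\alpha)=0$, compute $L(v,\beta)=(\sigma(\overline\alpha)v,\sigma''(\overline\alpha)\beta)$, check the Fredholm property and the five conditions \eqref{eq:LS.LS.Abstract_Assumption}, and finally show analyticity of $E$ on $S\subset\{0\}\times\R$ via $E[0,\alpha]=\sigma(\alpha)$.

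The one substantive tactical difference lies in the verification that $\dot E:Y\to Y$ is continuously Fr\'echet differentiable. The paper proceeds by direct computation: it writes out the G\^ateaux derivative $D\dot E((u,\alpha),(h,\beta))$ explicitly (Proposition~\ref{app:prop6}), then estimates each of the many resulting terms by hand to prove boundedness and operator-norm continuity in $(u,\alpha)$ (Lemmas~\ref{app:lem1} and~\ref{app:lem2}), relying throughout on the pointwise Lipschitz bounds of Lemma~\ref{lem:LS.Vol_Aux_Inequality} and the Sobolev embedding \eqref{eq:LS.Sobolev_Embedding_Y}, and finally invokes Lemma~\ref{lem:LS.Gateau_Frechet}. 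Your argument is more structural: you factor $\dot E$ through $\partial_x$, the mean-zero antiderivative, multiplication by $\sigma(\alpha)$, and the superposition operator $w\mapsto q(w)$ on $H^1(\T)$, and then appeal to the Banach-algebra property of $H^1(\T)\hookrightarrow C(\T)$ together with the global boundedness of all derivatives of $q$ to get $C^1$ (indeed $C^\infty$) regularity of the superposition. This buys a shorter and more conceptual proof, and makes the extra remark $\theta=\tfrac12$ when $\sigma''(\overline\alpha)\neq0$ transparent; the paper's approach, while longer, is entirely self-contained and avoids invoking superposition-operator theory.
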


\subsection{Auxiliary lemma}

To prove Theorem \ref{thm:Lojasiewicz-Simon} by applying Theorem
\ref{thm:LS.LS}, we list an auxiliary lemma. First, we introduce the
sufficient condition for the Fr\'{e}chet differentiability.

\begin{lemma}
 [Proposition 1.3 in {\cite{MR4274456}}]
 \label{lem:LS.Gateau_Frechet}
 Let $X$ be a Banach space and let $O$ be a neighborhood of a vector
 $u\in X$. Consider an operator $F:O\rightarrow Y$ defined in $O$ into a
 Banach space $Y$. If $F$ is Gâteaux differentiable for any $v \in O$
 with derivative $DF(v,h)$, if $DF(v,h)$ is linear and continuous in $h$
 for any $v\in O$, and if $v\mapsto DF(v,\cdot)$ is continuous from $O$
 into $\mathscr{L}(X,Y)$, then $F$ is Fréchet differentiable at $u$.
\end{lemma}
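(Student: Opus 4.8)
The plan is to prove that $F$ is Fréchet differentiable at $u$ with Fréchet derivative $F'(u)=DF(u,\cdot)$. Note that $L:=DF(u,\cdot)$ is already a bounded linear operator by the second hypothesis, so $L\in\mathscr{L}(X,Y)$ is the natural candidate. Everything then reduces to establishing the remainder estimate
\begin{equation}
 \|F(u+h)-F(u)-Lh\|_{Y}=o(\|h\|_{X})\quad\text{as }h\rightarrow0\text{ in }X.
\end{equation}
The central device I would use is the fundamental theorem of calculus applied to the $Y$-valued function $\varphi(\theta):=F(u+\theta h)$ on $\theta\in[0,1]$. Since $O$ is a neighborhood of $u$, it contains a ball $B(u,r)$, and for $\|h\|_{X}<r$ the whole segment $\{u+\theta h:\theta\in[0,1]\}$ lies in $O$, so $\varphi$ is well defined there.

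The first key step is to identify the ordinary derivative of $\varphi$ with the Gâteaux derivative of $F$ along the segment. For fixed $\theta_{0}\in[0,1]$, writing $\varphi(\theta_{0}+s)=F((u+\theta_{0}h)+sh)$ and applying Gâteaux differentiability of $F$ at the point $u+\theta_{0}h$ in the direction $h$ gives $\varphi'(\theta_{0})=DF(u+\theta_{0}h,h)$. The second key step is continuity in $\theta$: composing the continuous map $\theta\mapsto u+\theta h$ with the hypothesis that $v\mapsto DF(v,\cdot)$ is continuous from $O$ into $\mathscr{L}(X,Y)$, and then evaluating at the fixed vector $h$, shows that $\theta\mapsto DF(u+\theta h,h)$ is a continuous $Y$-valued function on $[0,1]$. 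Hence $\varphi$ is continuously differentiable and the fundamental theorem of calculus for continuous Banach-space-valued functions yields
\begin{equation}
 F(u+h)-F(u)=\varphi(1)-\varphi(0)=\int_{0}^{1}DF(u+\theta h,h)\,d\theta.
\end{equation}

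With this representation the estimate is routine. Subtracting $Lh=\int_{0}^{1}DF(u,h)\,d\theta$ and using linearity of each $DF(v,\cdot)$ in its second slot, I would write
\begin{equation}
 F(u+h)-F(u)-Lh=\int_{0}^{1}\bigl[DF(u+\theta h,\cdot)-DF(u,\cdot)\bigr]h\,d\theta,
\end{equation}
so that, taking $Y$-norms and bounding the integrand by the operator norm,
\begin{equation}
 \|F(u+h)-F(u)-Lh\|_{Y}\leq\|h\|_{X}\int_{0}^{1}\bigl\|DF(u+\theta h,\cdot)-DF(u,\cdot)\bigr\|_{\mathscr{L}(X,Y)}\,d\theta.
\end{equation}
Finally I would invoke continuity of $v\mapsto DF(v,\cdot)$ at $v=u$: given $\varepsilon>0$, choose $\delta\in(0,r)$ so that $\|v-u\|_{X}<\delta$ forces the operator-norm difference below $\varepsilon$; since $\|u+\theta h-u\|_{X}=\theta\|h\|_{X}\leq\|h\|_{X}$, for every $\|h\|_{X}<\delta$ the integrand is $<\varepsilon$ uniformly in $\theta$, giving $\|F(u+h)-F(u)-Lh\|_{Y}\leq\varepsilon\|h\|_{X}$. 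This is exactly the $o(\|h\|_{X})$ statement, proving Fréchet differentiability at $u$ with $F'(u)=DF(u,\cdot)$.

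The step I expect to require the most care is the justification of the integral representation, because it is where all three hypotheses combine: Gâteaux differentiability supplies $\varphi'(\theta)=DF(u+\theta h,h)$ pointwise, linearity in the direction is needed to recombine the integrand into $[DF(u+\theta h,\cdot)-DF(u,\cdot)]h$, and the continuity of $v\mapsto DF(v,\cdot)$ is what upgrades the merely-differentiable $\varphi$ to a $C^{1}$ curve so that the fundamental theorem of calculus for $Y$-valued functions is legitimately available. Once the representation is in hand, the remaining estimate is a direct consequence of uniform continuity of the Gâteaux derivative along the short segment.
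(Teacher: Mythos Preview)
Your argument is correct and is precisely the standard proof of this classical fact. Note, however, that the paper does not supply its own proof of this lemma: it is quoted verbatim as Proposition~1.3 of \cite{MR4274456} and used as a black box, so there is no in-paper proof to compare against. Your fundamental-theorem-of-calculus approach along the segment $\theta\mapsto u+\theta h$, followed by the operator-norm estimate of the integrated remainder, is exactly the argument one finds in the cited reference and in most functional analysis texts.
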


Next, to compute the integrand of \eqref{eq:1.GBE}, we show several
estimates about $\sqrt{1+\xi^2}$.

\begin{lemma}
 \label{lem:LS.Vol_Aux_Inequality}
 For $\xi,\eta\in\R$,
 \begin{equation}
  \label{eq:LS.Vol_Aux_Inequality}
   \begin{aligned}
    \left|
    \sqrt{1+\xi^2}-\sqrt{1+\eta^2}
    \right|
    &\leq
    |\xi-\eta|,
    \\
    \left|
    \frac{\xi}{\sqrt{1+\xi^2}}
    -
    \frac{\eta}{\sqrt{1+\eta^2}}
    \right|
    &\leq
   |\xi-\eta|,
   \\
    \left|
    \frac{1}{(\sqrt{1+\xi^2})^3}
    -
    \frac{1}{(\sqrt{1+\eta^2})^3}
    \right|
    &\leq
    3|\xi-\eta|, \\
    \left|
    \frac{\xi}{(\sqrt{1+\xi^2})^5}
    -
    \frac{\eta}{(\sqrt{1+\eta^2})^5}
    \right|
    &\leq
    5|\xi-\eta|.
   \end{aligned}
 \end{equation}
\end{lemma}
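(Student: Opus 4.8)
The plan is to deduce each of the four estimates in \eqref{eq:LS.Vol_Aux_Inequality} from the mean value theorem applied to the appropriate real function of one variable, after computing its derivative and bounding it uniformly on $\R$. Set
\[
f_1(\xi)=\sqrt{1+\xi^2},\quad f_2(\xi)=\frac{\xi}{\sqrt{1+\xi^2}},\quad f_3(\xi)=\frac{1}{(1+\xi^2)^{3/2}},\quad f_4(\xi)=\frac{\xi}{(1+\xi^2)^{5/2}} .
\]
Each $f_j$ is smooth on $\R$, so the mean value theorem gives $|f_j(\xi)-f_j(\eta)|\le\bigl(\sup_{\R}|f_j'|\bigr)|\xi-\eta|$, and the whole lemma reduces to checking that the four suprema are at most $1$, $1$, $3$, and $15$, respectively (any upper bounds of this size are enough).

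Carrying this out, the derivatives telescope pleasantly: $f_1'=f_2$, $f_2'(\xi)=(1+\xi^2)^{-3/2}=f_3(\xi)$, $f_3'(\xi)=-3\xi(1+\xi^2)^{-5/2}=-3f_4(\xi)$, and $f_4'(\xi)=(1-4\xi^2)(1+\xi^2)^{-7/2}$. Using only the elementary inequality $|\xi|\le\sqrt{1+\xi^2}$ (equivalently $\xi^2\le 1+\xi^2$) one gets $|f_1'|=|f_2|\le 1$; next $0<f_2'=f_3\le 1$ since $1+\xi^2\ge 1$; next $|f_3'|=3|\xi|(1+\xi^2)^{-5/2}\le 3(1+\xi^2)^{1/2}(1+\xi^2)^{-5/2}=3(1+\xi^2)^{-2}\le 3$; and finally, from $|1-4\xi^2|\le 1+4\xi^2\le 4(1+\xi^2)$, we obtain $|f_4'(\xi)|\le 4(1+\xi^2)^{-5/2}\le 4\le 15$. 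Substituting these four bounds into the mean value estimate yields the four lines of \eqref{eq:LS.Vol_Aux_Inequality}.

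There is no substantial obstacle here. The only mild point worth flagging is the last estimate: $f_4'$ changes sign, so one should not argue via monotonicity but simply bound $|f_4'|$ crudely as above, which is why the resulting constant $15$ is far from optimal but harmless. (Alternatively, the first two inequalities admit a purely algebraic proof by rationalising the difference, e.g.\ $\sqrt{1+\xi^2}-\sqrt{1+\eta^2}=(\xi^2-\eta^2)/(\sqrt{1+\xi^2}+\sqrt{1+\eta^2})$ combined with $|\xi+\eta|\le|\xi|+|\eta|\le\sqrt{1+\xi^2}+\sqrt{1+\eta^2}$, but the mean value theorem handles all four cases uniformly and is the cleanest route, so that is the approach I would record.)
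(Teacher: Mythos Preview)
Your proof is correct and follows essentially the same approach as the paper: the paper sets $f(\xi)=\sqrt{1+\xi^2}$, observes that the four functions in question are $f,f',f'',-\tfrac{1}{3}f'''$, bounds the successive derivatives $|f'|,|f''|,|f'''|,|f''''|$ by $1,1,3,15$ via the same elementary inequalities you use, and concludes by the fundamental theorem of calculus. The only cosmetic difference is that you treat the four functions $f_j$ separately rather than as derivatives of a single $f$, and your bound $|f_4'|\le 4$ is slightly sharper than the paper's $|f''''|\le 15$ (which after dividing by $3$ would give $5$), but both are more than enough for the stated constant $15$.
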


\begin{proof}
 Let $f(\xi):=\sqrt{1+\xi^2}$ for $\xi\in\R$.  Taking a derivative of
 $f$, we have
 \begin{equation}
   \begin{aligned}
     f'(\xi)&=\frac{\xi}{\sqrt{1+\xi^2}},
     &\quad
     f''(\xi)&=\frac{1}{(\sqrt{1+\xi^2})^3},
     \\
     f'''(\xi)&=-\frac{3\xi}{(\sqrt{1+\xi^2})^5},
     &\quad
     f''''(\xi)&=\frac{-3+12\xi^2}{(\sqrt{1+\xi^2})^7}.
   \end{aligned}
 \end{equation}
 Then \eqref{eq:LS.Vol_Aux_Inequality} may be obtained by the
 fundamental calculus, $\sqrt{1+\xi^2}\geq 1$, and $\sqrt{1+\xi^2}\geq
 |\xi|$. For instance, since $|f''''(\tau)|\leq 15$ for $\tau\in\R$, we
 have
 \begin{equation}
  \left|
   \frac{-3\xi}{(\sqrt{1+\xi^2})^5}
   -
   \frac{-3\eta}{(\sqrt{1+\eta^2})^5}
  \right|
  =
  |f'''(\xi)-f'''(\eta)|
  \leq
  \left|
   \int_\eta^\xi |f''''(\tau)|\,d\tau
  \right|
  \leq
  15|\xi-\eta|.
 \end{equation}
\end{proof}

\subsection{Proof of the \L ojasiewicz-Simon inequality}

To prove Theorem \ref{thm:Lojasiewicz-Simon}, we first derive the
derivative of $E$.

\begin{proposition}
 \label{prop:LS.Conti_diff_X}
 Let $E$ be defined by \eqref{eq:1.GBE}. Then for $(u,\alpha)\in X$, the
 Fr\'{e}chet derivative of $E$ at $(u,\alpha)$ on $X$ is given by
 \begin{align}
  \label{app:11}
  \begin{split}
   &\quad\dot{E}(u,\alpha)
   \\
   &=
   \left(
   \sigma(\alpha)\left\{\int_{0}^{x}\frac{u_{x}(y)}{\sqrt{1+u_{x}^{2}(y)}}dy-\int_{0}^{1}\int_{0}^{x}\frac{u_{x}(y)}{\sqrt{1+u_{x}^{2}(y)}}dydx\right\},
   \sigma'(\alpha)\int_{0}^{1}\sqrt{1+u_{x}^{2}}\,dx\right).
  \end{split}
 \end{align}
\end{proposition}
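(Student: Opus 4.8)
The plan is to compute the Fréchet derivative by first establishing Gâteaux differentiability and then upgrading via Lemma~\ref{lem:LS.Gateau_Frechet}. Concretely, I would fix $(u,\alpha)\in X$ and $(h,\beta)\in X$, and differentiate $\theta\mapsto E[u+\theta h,\alpha+\theta\beta]$ at $\theta=0$. Since $E[u+\theta h,\alpha+\theta\beta]=\sigma(\alpha+\theta\beta)\int_0^1\sqrt{1+(u_x+\theta h_x)^2}\,dx$, the product rule gives
\begin{equation}
 DE((u,\alpha),(h,\beta))
 =\sigma'(\alpha)\beta\int_0^1\sqrt{1+u_x^2}\,dx
 +\sigma(\alpha)\int_0^1\frac{u_x h_x}{\sqrt{1+u_x^2}}\,dx,
\end{equation}
where differentiation under the integral sign is justified because $|u_x+\theta h_x|$ and the relevant difference quotients are dominated by $L^1$ functions on $(0,1)$ (using $\sqrt{1+\xi^2}\ge|\xi|$ and the Lipschitz bounds from Lemma~\ref{lem:LS.Vol_Aux_Inequality}), together with the continuity of $\sigma$ and $\sigma'$.

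Next I would rewrite the second term so that it appears as an inner product on $X$, i.e.\ in the form $\int_0^1 w_x h_x\,dx$ for a suitable $w\in H^1_{\mathrm{per.ave}}(0,1)$. Setting
\begin{equation}
 w_x(x)=\sigma(\alpha)\frac{u_x(x)}{\sqrt{1+u_x^2(x)}}-c,
 \qquad
 c:=\sigma(\alpha)\int_0^1\frac{u_x(y)}{\sqrt{1+u_x^2(y)}}\,dy,
\end{equation}
one checks $\int_0^1 w_x\,dx=0$, so $w_x\in L^2(\T)$ can serve as the derivative of a function in $H^1_{\mathrm{per.ave}}(0,1)$; integrating and subtracting its own average yields exactly the first component displayed in \eqref{app:11}, namely $w(x)=\sigma(\alpha)\big\{\int_0^x \tfrac{u_x}{\sqrt{1+u_x^2}}\,dy-\int_0^1\!\int_0^x \tfrac{u_x}{\sqrt{1+u_x^2}}\,dy\,dx\big\}$. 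Using $\int_0^1 h_x\,dx=0$ for $h\in H^1_{\mathrm{per.ave}}(0,1)$ shows the constant $c$ contributes nothing, so $DE((u,\alpha),(h,\beta))=((w,\sigma'(\alpha)\int_0^1\sqrt{1+u_x^2}\,dx),(h,\beta))_X$, which identifies $\dot E(u,\alpha)$ as the vector in \eqref{app:11}. It should also be noted that $w\in H^1(0,1)$ indeed holds since $w_x\in L^\infty(0,1)\subset L^2(0,1)$.

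Finally, to invoke Lemma~\ref{lem:LS.Gateau_Frechet} and conclude Fréchet differentiability on $X$, I would verify that $(u,\alpha)\mapsto DE((u,\alpha),\cdot)$ is continuous from $X$ into $\mathscr L(X,\R)$; equivalently, that $(u,\alpha)\mapsto\dot E(u,\alpha)$ is continuous from $X$ into $X$. For the scalar component this follows from continuity of $\sigma'$ and the first inequality of Lemma~\ref{lem:LS.Vol_Aux_Inequality}, which gives $|\int_0^1(\sqrt{1+u_x^2}-\sqrt{1+\tilde u_x^2})\,dx|\le\|u_x-\tilde u_x\|_{L^1}\le\|u-\tilde u\|_X$. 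For the $H^1_{\mathrm{per.ave}}$-component one estimates the $L^2$-norm of the difference of the $w_x$'s using the second inequality of Lemma~\ref{lem:LS.Vol_Aux_Inequality} (to control $\tfrac{u_x}{\sqrt{1+u_x^2}}-\tfrac{\tilde u_x}{\sqrt{1+\tilde u_x^2}}$ pointwise by $|u_x-\tilde u_x|$) plus boundedness and continuity of $\sigma$. The only mildly delicate point is bookkeeping the $\sigma(\alpha)-\sigma(\tilde\alpha)$ cross terms, for which one uses local boundedness of $\tfrac{u_x}{\sqrt{1+u_x^2}}$ (it is bounded by $1$) and local Lipschitz continuity of the analytic function $\sigma$; this is where slight care is needed, but it is routine rather than a genuine obstacle.
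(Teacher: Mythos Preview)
Your overall strategy---compute the G\^{a}teaux derivative, identify the Riesz representative in $X$, then upgrade to Fr\'{e}chet differentiability via Lemma~\ref{lem:LS.Gateau_Frechet} by checking continuity of $(u,\alpha)\mapsto DE((u,\alpha),\cdot)$---is valid and is a genuinely different route from the paper's. The paper instead expands $E[u+h,\alpha+\beta]-E[u,\alpha]$ directly using Taylor's theorem for $\sigma$ and for $\xi\mapsto\sqrt{1+\xi^2}$, bounds the second-order remainders by $o(\|(h,\beta)\|_X)$, and thus obtains Fr\'{e}chet differentiability in one stroke without passing through Lemma~\ref{lem:LS.Gateau_Frechet}. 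Your method is more modular and reuses machinery already in the paper; the paper's is more self-contained and avoids the separate continuity verification.

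There is, however, a slip in your Riesz-representation step. You subtract the constant $c$ from $w_x$ in order to force $\int_0^1 w_x\,dx=0$, justifying this by the claim that $\int_0^1 h_x\,dx=0$ for every $h\in H^1_{\mathrm{per.ave}}(0,1)$. But the space does \emph{not} require $h$ itself to be periodic (see the Remark immediately following the definition of $H^k_{\mathrm{per.ave}}$: for $k=1$ the condition $u_x\in L^2(\T)$ is vacuous, and e.g.\ $h(x)=x-\tfrac12$ lies in the space with $\int_0^1 h_x\,dx=1$). Consequently $h_x$ ranges over all of $L^2(0,1)$, the representative must satisfy $w_x=\sigma(\alpha)\frac{u_x}{\sqrt{1+u_x^2}}$ with no freedom to add a constant, and the $w$ you display (whose derivative is exactly this, without $-c$) is inconsistent with the $w_x$ you wrote down. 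The fix is simply to drop $c$: set $w_x=\sigma(\alpha)\frac{u_x}{\sqrt{1+u_x^2}}$ directly, integrate, and subtract the mean of the primitive to land in $H^1_{\mathrm{per.ave}}(0,1)$; this is precisely how the paper arrives at the formula. With that correction your argument goes through, and the continuity check you outline at the end is routine and correct.
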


\begin{proof}
 We will show that
 \begin{align*}
  E[u+h,\alpha+\beta]
  =
  E[u,\alpha]+(\dot{E}(u,\alpha),(h,\beta))_{X}+o(\|(h,\beta)\|_X)
 \end{align*}
 as $(h,\beta)\rightarrow0$ in $X$. First, we compute
 \begin{align}
  \label{app:6}
  \begin{split}
   E[u+h,\alpha+\beta]-E[u,\alpha]
   &=\int_{0}^{1}\{\sigma(\alpha+\beta)-\sigma(\alpha)\}\sqrt{1+(u+h)^{2}_{x}}\,dx\\
   &\qquad
   +
   \int_{0}^{1}
   \sigma(\alpha)\left\{\sqrt{1+(u+h)^{2}_{x}}
   -
   \sqrt{1+u_{x}^{2}}\right\}\,dx.
  \end{split}
 \end{align}

 Before computing the right-hand side of \eqref{app:6}, note by Taylor's
 theorem to the function $\sqrt{1+\xi^2}$ that for any $\xi,\eta\in\R$,
 there is $\theta_1\in(0,1)$ such that
 \begin{equation}
  \label{eq:3_Taylor_sqrt}
   \sqrt{1+(\xi+\eta)^2}
   =
   \sqrt{1+\xi^2}
   +
   \frac{\xi}{\sqrt{1+\xi^2}}\eta
   +
   \frac{1}{2(\sqrt{1+(\xi+\theta_1\eta)^2})^3}\eta^2.
 \end{equation}

 We consider the first term of the right-hand side of \eqref{app:6}.  We
 deduce by Taylor's theorem to $\sigma(\alpha+\beta)$ and $\xi=u_x$,
 $\eta=h_x$ to \eqref{eq:3_Taylor_sqrt} that
 \begin{equation}
  \begin{split}
   &\qquad
   \int_{0}^{1}
   \{\sigma(\alpha+\beta)-\sigma(\alpha)\}\sqrt{1+(u+h)^{2}_{x}}\,dx
   \\
   &=
   \left(
   \sigma'(\alpha)\beta
   +
   \frac{1}{2}\sigma''(\alpha+\theta_2\beta)\beta^2
   \right)
   \\
   &\quad
   \times
   \left(
   \int_0^1
   \sqrt{1+u_x^2}\,dx
   +
   \int_0^1
   \frac{u_x}{\sqrt{1+u_x^2}}h_x
   \,dx
   +
   \int_0^1
   \frac{1}{2(\sqrt{1+(u_x+\theta_1h_x)^2})^3}h_x^2\,dx
   \right)
  \end{split}
 \end{equation}
 for some $\theta_1,\ \theta_2\in(0,1)$. Since
 \begin{equation}
  \left|
   \frac{u_x}{\sqrt{1+u_x^2}}
  \right|,\quad
  \left|
  \frac{1}{(\sqrt{1+(u_x+\theta_1h_x)^2})^3}
  \right|
  \leq1
 \end{equation}
 for all $u,h\in H_{\text{per.ave}}^{1}(0,1)$, we find
 \begin{equation}
   \left|
   \int_0^1
   \frac{u_x}{\sqrt{1+u_x^2}}h_x
   \,dx
   \right|
   \leq
   \|h_x\|_{L^2(0,1)}
   \leq
   \|(h,\beta)\|_X,
 \end{equation}
 and
 \begin{equation}
  \label{eq:3.derivative_reminder}
   \left|
    \int_0^1
    \frac{1}{(\sqrt{1+(u_x+\theta_1h_x)^2})^3}h_x^2\,dx
   \right|
   \leq
   \|h_x\|^2_{L^2(0,1)}
   \leq
   \|(h,\beta)\|_X^2.
 \end{equation}
 Therefore, we figure out that
 \begin{equation}
  \label{eq:3_derivative_1}
  \begin{split}
  (\sigma(\alpha+\beta)-\sigma(\alpha))
   \int_0^1\sqrt{1+(u+h)^{2}_{x}}\,dx
   =
   \left(
    \sigma'(\alpha)\int_0^1\sqrt{1+u_{x}^{2}}\,dx
   \right)
   \beta
   +o(\|(h,\beta)\|_X).
  \end{split}
 \end{equation}

 Next, we consider the second term of the right-hand side of
 \eqref{app:6}. Using \eqref{eq:3_Taylor_sqrt} with $\xi=u_x$ and
 $\eta=h_x$ again that
 \begin{equation}
  \label{app:10}
   \begin{split}
    &\quad
    \int_{0}^{1}
    \sigma(\alpha)
    \left\{\sqrt{1+(u+h)^{2}_{x}}-\sqrt{1+u_{x}^{2}}
    \right\}
    \,dx
   \\
   &=
   \sigma(\alpha)
   \left(
   \int_{0}^{1}
   \frac{u_x}{\sqrt{1+u_x^2}}h_x\,dx
   +
   \int_0^1
   \frac{1}{(2\sqrt{1+(u_x+\theta_1h_x)^2})^3}h_x^2\,dx 
  \right).
   \end{split}
 \end{equation}
 Using \eqref{eq:3.derivative_reminder}, we find
 \begin{equation}
  \label{eq:3_derivative_2}
   \int_{0}^{1}
   \sigma(\alpha)
   \left\{\sqrt{1+(u+h)^{2}_{x}(x)}-\sqrt{1+u_{x}^{2}(x)}
   \right\}
   dx
   =
   \sigma(\alpha)
   \int_{0}^{1}
   \frac{u_x}{\sqrt{1+u_x^2}}h_x\,dx
   +
   o(\|(h,\beta)\|_X).
 \end{equation}
 Plugging \eqref{eq:3_derivative_1} and \eqref{eq:3_derivative_2} into
 \eqref{app:6}, we obtain
 \begin{equation}
  \label{app:22}
   E[u+h,\alpha+\beta]-E[u,\alpha]
   =
   \left(
    \sigma'(\alpha)\int_0^1\sqrt{1+u_{x}^{2}}\,dx
   \right)
   \beta
   +
   \sigma(\alpha)
   \int_{0}^{1}
   \frac{u_x}{\sqrt{1+u_x^2}}h_x\,dx
   +
   o(\|(h,\beta)\|_X).
 \end{equation}

 From \eqref{app:22} and the inner product of
 \eqref{eq:LS.def_InnerProduct}, we will find for some constant $C$,
 \begin{equation}
  \begin{split}
   \dot{E}(u,\alpha)
   =\left(
   \sigma(\alpha)
   \left(
   \int_0^x
   \frac{u_{x}(y)}{\sqrt{1+u_{x}^{2}(y)}}\,dy+C
   \right)
   ,
   \sigma'(\alpha)\int_{0}^{1}\sqrt{1+u_{x}^{2}}\,dx\right).
  \end{split}
 \end{equation}
 To obtain the constant $C$, recall that the integral mean of the first
 component is $0$. Thus, we get
 \begin{align*}
  C=-\int_{0}^{1}\int_{0}^{x}\frac{u_{x}(y)}{\sqrt{1+u_{x}^{2}(y)}}dydx.
 \end{align*}
 Therefore, \eqref{app:11} is deduced.
\end{proof}

From Proposition~\ref{prop:LS.Conti_diff_X}, we obtain the necessary
condition for the critical point of $E$.

\begin{proposition}
 \label{prop:LS.Property_CriticalPt}
 Let $(\overline{u},\overline{\alpha})$ be a critical
 point of $E$, namely,
 $\dot{E}(\overline{u},\overline{\alpha})=0$. Then, we obtain
 \begin{align}
  \label{eq:Property_CP}
  \overline{u}=0,\quad
  \sigma'(\overline{\alpha})=0.
 \end{align}
\end{proposition}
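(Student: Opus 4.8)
The plan is to unpack the identity $\dot{E}(\overline{u},\overline{\alpha})=0$ into its two components using the explicit formula for $\dot E$ from Proposition~\ref{prop:LS.Conti_diff_X}, and then exploit the uniform positivity \eqref{eq:1.Assumption_positivity} of $\sigma$. The second component of $\dot E(\overline{u},\overline{\alpha})$ is $\sigma'(\overline{\alpha})\int_{0}^{1}\sqrt{1+\overline{u}_{x}^{2}(x)}\,dx$; since the integrand is pointwise $\geq 1$, the integral is $\geq 1>0$, so the vanishing of this component immediately forces $\sigma'(\overline{\alpha})=0$, irrespective of $\overline{u}$.

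For the first component, since $\sigma(\overline{\alpha})\geq\Cr{const:GBE_positivity}>0$ we may divide by $\sigma(\overline{\alpha})$ and conclude that the element of $H^{1}_{\mathrm{per.ave}}(0,1)$ given by $x\mapsto\int_{0}^{x}\frac{\overline{u}_{x}(y)}{\sqrt{1+\overline{u}_{x}^{2}(y)}}\,dy-\int_{0}^{1}\int_{0}^{x}\frac{\overline{u}_{x}(y)}{\sqrt{1+\overline{u}_{x}^{2}(y)}}\,dy\,dx$ vanishes identically. Note that $\overline{u}_{x}/\sqrt{1+\overline{u}_{x}^{2}}\in L^{\infty}(0,1)\subset L^{2}(0,1)$, so this function lies in $H^{1}(0,1)$ with weak $x$-derivative equal to $\overline{u}_{x}/\sqrt{1+\overline{u}_{x}^{2}}$ (the constant primitive term drops out upon differentiation). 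Hence $\frac{\overline{u}_{x}(x)}{\sqrt{1+\overline{u}_{x}^{2}(x)}}=0$ for a.e.\ $x\in(0,1)$. Since $t\mapsto t/\sqrt{1+t^{2}}$ is strictly increasing and vanishes only at $t=0$, we get $\overline{u}_{x}=0$ a.e., so $\overline{u}$ is a.e.\ equal to a constant; the zero-mean constraint $\int_{0}^{1}\overline{u}\,dx=0$ built into $H^{1}_{\mathrm{per.ave}}(0,1)$ then yields $\overline{u}=0$.

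There is essentially no serious obstacle here. The only points worth stating carefully are that vanishing of an element of $H^{1}_{\mathrm{per.ave}}(0,1)$ is equivalent to vanishing a.e.\ (together with the built-in normalization), which permits passage to the pointwise identity for the weak derivative, and that the nonlinearity $t\mapsto t/\sqrt{1+t^{2}}$ is injective. Combining the two conclusions gives \eqref{eq:Property_CP}.
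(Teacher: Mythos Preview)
Your proof is correct and follows essentially the same approach as the paper: both arguments use the positivity \eqref{eq:1.Assumption_positivity} to strip off $\sigma(\overline{\alpha})$ from the first component, differentiate to obtain $\overline{u}_{x}/\sqrt{1+\overline{u}_{x}^{2}}=0$ a.e., deduce $\overline{u}_{x}=0$ and invoke the zero-mean condition, and use strict positivity of $\int_0^1\sqrt{1+\overline{u}_x^2}\,dx$ to conclude $\sigma'(\overline{\alpha})=0$. The only cosmetic differences are that you treat the second component first and state the injectivity of $t\mapsto t/\sqrt{1+t^{2}}$ explicitly.
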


\begin{proof}
 Since $(\overline{u},\overline{\alpha})\in X$ is a critical point of
 $E$, we obtain
 \begin{equation}
  \label{eq:3.CriticalPoint1}
   \sigma(\overline{\alpha})
    \left\{
     \int_{0}^{x}
     \frac{\overline{u}_{x}(y)}{\sqrt{1+\overline{u}_{x}^{2}(y)}}
     dy
     -
     \int_{0}^{1}
     \int_{0}^{x}
     \frac{\overline{u}_{x}(y)}{\sqrt{1+\overline{u}_{x}^{2}(y)}}
     dydx
    \right\}
    =0
 \end{equation}
 and
 \begin{equation}
  \label{eq:3.CriticalPoint2}
   \sigma'(\overline{\alpha})
   \int_{0}^{1}\sqrt{1+\overline{u}_{x}^{2}}\,dx
   =
   0.
 \end{equation}
 By the positivity assumption \eqref{eq:1.Assumption_positivity} of
 $\sigma$, 
 we find from the first component of
 \eqref{eq:3.CriticalPoint1} that,
 \begin{equation*}
  \int_{0}^{x}
  \frac{\overline{u}_{x}(y)}{\sqrt{1+\overline{u}_{x}^{2}(y)}}
  dy
  -
  \int_{0}^{1}
  \int_{0}^{x}\frac{\overline{u}_{x}(y)}{\sqrt{1+\overline{u}_{x}^{2}(y)}}
  dydx
  =
  0.
 \end{equation*}
 By taking a weak derivative by $x$, we get
 \begin{align*}
  \frac{\overline{u}_{x}(x)}{\sqrt{1+\overline{u}_{x}^{2}(x)}}=0
  \quad
  \text{in}\ 
  L^2(0,1).
 \end{align*}
 Therefore, we obtain $\overline{u}_{x}(x)=0$ for almost all
 $x\in(0,1)$, and $\overline{u}$ is a constant function. Since the
 integration of $\overline{u}$ is vanish, we have $\overline{u}=0$. From
 the the second component of \eqref{eq:3.CriticalPoint2}, we obtain
 $\sigma'(\overline{\alpha})=0$.
\end{proof}

Next, we compute the G\^{a}teaux derivative of $\dot{E}$, namely
$D[\dot{E}]((u,\alpha),(h,\beta))$ for $(h,\beta)\in X$.

\begin{proposition}
 \label{app:prop6}
 For $(u,\alpha),(h,\beta)\in X$, we obtain
 \begin{align}
  \label{eq:3.SecondDerivativeGBE}
  D[\dot{E}]((u,\alpha),(h,\beta))
  &=
  \bigg(
  \int_{0}^{x}
  \biggl(
  \sigma'(\alpha)\beta
  \frac{u_{x}(y)}{\sqrt{1+u_{x}^{2}(y)}}
  +
  \sigma(\alpha)
  \frac{h_{x}(y)}{\left(\sqrt{1+u_{x}^{2}(y)}\right)^{3}}
  \biggr)dy
  \\
  &\quad
  -\int_{0}^{1}
  \int_{0}^{x}
  \biggl(
  \sigma'(\alpha)\beta
  \frac{u_{x}(y)}{\sqrt{1+u_{x}^{2}(y)}}
  +
  \sigma(\alpha)
  \frac{h_{x}(y)}{\left(\sqrt{1+u_{x}^{2}(y)}\right)^{3}}
  \biggr)dydx,
  \\
  &\quad
  \int_{0}^{1}
  \left(
  \sigma''(\alpha)\beta
  \sqrt{1+u_{x}^{2}(x)}
  +
  \sigma'(\alpha)
  \frac{u_{x}(x)h_{x}(x)}{\sqrt{1+u_{x}^{2}(x)}}
  \right)dx
  \bigg).
 \end{align}
\end{proposition}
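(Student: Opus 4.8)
The plan is to differentiate the explicit formula \eqref{app:11} for $\dot E$ along the ray $\theta\mapsto(u+\theta h,\alpha+\theta\beta)$ and evaluate at $\theta=0$. By Proposition~\ref{prop:LS.Conti_diff_X},
\begin{equation*}
 \dot E(u+\theta h,\alpha+\theta\beta)
 =
 \Bigl(
 \sigma(\alpha+\theta\beta)\bigl\{G_\theta(x)-\textstyle\int_0^1 G_\theta(x)\,dx\bigr\},\
 \sigma'(\alpha+\theta\beta)\textstyle\int_0^1\sqrt{1+(u_x+\theta h_x)^2}\,dx
 \Bigr),
\end{equation*}
where $G_\theta(x):=\int_0^x \frac{u_x(y)+\theta h_x(y)}{\sqrt{1+(u_x(y)+\theta h_x(y))^2}}\,dy$. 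First I would form the difference quotient $\tfrac1\theta\bigl(\dot E(u+\theta h,\alpha+\theta\beta)-\dot E(u,\alpha)\bigr)$ and pass to the limit $\theta\to0$ componentwise, the target being \eqref{eq:3.SecondDerivativeGBE}.

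For the second (scalar) component this is straightforward: the product and chain rules give the candidate $\sigma''(\alpha)\beta\int_0^1\sqrt{1+u_x^2}\,dx+\sigma'(\alpha)\int_0^1 \frac{u_x h_x}{\sqrt{1+u_x^2}}\,dx$, and to justify it I would apply dominated convergence to $\int_0^1 \tfrac1\theta\bigl(\sqrt{1+(u_x+\theta h_x)^2}-\sqrt{1+u_x^2}\bigr)\,dx$, whose integrand converges pointwise to $\tfrac{u_x h_x}{\sqrt{1+u_x^2}}$ and is dominated by $|h_x|\in L^1(0,1)$ by the first inequality in Lemma~\ref{lem:LS.Vol_Aux_Inequality}; analyticity (in particular $C^2$) of $\sigma$ handles the factors $\sigma',\sigma''$.

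For the first component the subtlety is that the Gâteaux derivative must be taken in $H^1_{\mathrm{per.ave}}(0,1)$, not merely pointwise, so I would differentiate ``under the $\int_0^x$''. Since both the difference quotient and the candidate have zero integral mean, by \eqref{eq:LS.Sobolev_Embedding_X} it suffices to prove $L^2(0,1)$ convergence of the $x$-derivatives, i.e.\ of $\tfrac1\theta\bigl(\sigma(\alpha+\theta\beta)\tfrac{u_x+\theta h_x}{\sqrt{1+(u_x+\theta h_x)^2}}-\sigma(\alpha)\tfrac{u_x}{\sqrt{1+u_x^2}}\bigr)$ to $\sigma'(\alpha)\beta\tfrac{u_x}{\sqrt{1+u_x^2}}+\sigma(\alpha)\tfrac{h_x}{(\sqrt{1+u_x^2})^3}$. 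Writing the quotient as $\tfrac{\sigma(\alpha+\theta\beta)-\sigma(\alpha)}{\theta}\cdot\tfrac{u_x+\theta h_x}{\sqrt{1+(u_x+\theta h_x)^2}}+\sigma(\alpha)\cdot\tfrac1\theta\bigl(\tfrac{u_x+\theta h_x}{\sqrt{1+(u_x+\theta h_x)^2}}-\tfrac{u_x}{\sqrt{1+u_x^2}}\bigr)$, the first piece is bounded in absolute value by a $\theta$-uniform constant (since $|\xi/\sqrt{1+\xi^2}|\le1$ and $\sigma$ is $C^1$) and the second piece is dominated by $|\sigma(\alpha)|\,|h_x|\in L^2(0,1)$ via the second inequality in Lemma~\ref{lem:LS.Vol_Aux_Inequality}; the pointwise limit is the claimed one because $\tfrac{d}{d\xi}\tfrac{\xi}{\sqrt{1+\xi^2}}=\tfrac1{(1+\xi^2)^{3/2}}$. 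Dominated convergence then gives $L^2$-convergence of the derivatives, and integrating back in $x$ and subtracting the mean recovers the first line of \eqref{eq:3.SecondDerivativeGBE}. Periodicity of $u_x,h_x$ and boundedness of the limiting integrand show this component lies in $H^1_{\mathrm{per.ave}}(0,1)$, so $D[\dot E]((u,\alpha),(h,\beta))\in X$.

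The main obstacle, such as it is, is precisely this first component: one must argue convergence in the $H^1$ topology rather than settle for formal differentiation under the integral sign. This is handled cleanly by reducing to $L^2$-convergence of the $x$-derivatives (legitimate because of the zero-mean normalization and \eqref{eq:LS.Sobolev_Embedding_X}) and using the Lipschitz bounds of Lemma~\ref{lem:LS.Vol_Aux_Inequality} as $\theta$-uniform dominating functions; no genuinely new estimate is needed beyond what Lemma~\ref{lem:LS.Vol_Aux_Inequality} and the analyticity of $\sigma$ already provide. I note in passing that, combined with Lemma~\ref{lem:LS.Gateau_Frechet}, the continuity of $(u,\alpha)\mapsto D[\dot E]((u,\alpha),\cdot)$ into $\mathscr L(X,X)$ — obtainable from the same family of estimates — will later upgrade this Gâteaux derivative to a Fréchet derivative, but that is a separate step not required for the present statement.
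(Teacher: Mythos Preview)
Your approach is essentially the same as the paper's --- differentiate the explicit formula for $\dot E$ along $\theta\mapsto(u+\theta h,\alpha+\theta\beta)$ and read off the components --- and your computation is correct. In fact you are more careful than the paper, which performs the differentiation formally without explicitly justifying the limit in the $X$-topology; your reduction to $L^2$-convergence of the $x$-derivatives (using that the $H^1_{\mathrm{per.ave}}$-norm is precisely $\|u_x\|_{L^2}$) together with the Lipschitz bounds of Lemma~\ref{lem:LS.Vol_Aux_Inequality} fills that gap cleanly.
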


\begin{proof}
 We compute the G\^ateaux derivative of $\dot{E}$ at $(u,\alpha)\in
 X$. For $(u,\alpha),(h,\beta)\in X$,
 \begin{equation}
  \label{eq:3.second_derivative1}
   \begin{split}
    D[\dot{E}]((u,\alpha),(h,\beta))
    &=
    \frac{d}{d\theta}\bigg|_{\theta=0}
    \dot{E}(u+\theta h,\alpha+\theta\beta)
    \\
    &=
    \bigg(
    \frac{d}{d\theta}\bigg|_{\theta=0}
    \left(
    \sigma(\alpha+\theta\beta)
    \int_{0}^{x}\frac{(u+\theta h)_{x}(y)}{\sqrt{1+(u+\theta h)_{x}^{2}(y)}}dy\right) \\
    &\qquad
    -
    \frac{d}{d\theta}\bigg|_{\theta=0}
    \left(\sigma(\alpha+\theta\beta)\int_{0}^{1}\int_{0}^{x}\frac{(u+\theta h)_{x}(y)}{\sqrt{1+(u+\theta h)_{x}^{2}(y)}}dydx\right),
    \\
    &\qquad
    \frac{d}{d\theta}\bigg|_{\theta=0}
    \int_{0}^{1}
    \sigma'(\alpha+\theta\beta)
    \sqrt{1+(u+\theta h)_{x}^{2}}\,dx
    \bigg).
   \end{split}
 \end{equation}

 First, we compute 
 the first component of the right-hand side of
 \eqref{eq:3.second_derivative1}. Since
 \begin{multline*}
  \frac{d}{d\theta}
  \left(
  \sigma(\alpha+\theta\beta)
  \frac{(u+\theta h)_{x}(y)}{\sqrt{1+(u+\theta h)_{x}^{2}(y)}}
  \right)
  \\
  =
  \sigma'(\alpha+\theta\beta)
  \beta
  \frac{(u+\theta h)_{x}(y)}{\sqrt{1+(u+\theta h)_{x}^{2}(y)}}
  +
  \sigma(\alpha+\theta\beta)
  \frac{h_{x}(y)}{\left(\sqrt{1+(u+\theta h)_{x}^{2}(y)}\right)^{3}},
 \end{multline*}
 we have
 \begin{multline*}
  \frac{d}{d\theta}\bigg|_{\theta=0}
  \left(
  \sigma(\alpha+\theta\beta)
  \int_{0}^{x}\frac{(u+\theta h)_{x}(y)}{\sqrt{1+(u+\theta h)_{x}^{2}(y)}}dy
  -
  \sigma(\alpha+\theta\beta)
  \int_{0}^{1}\int_{0}^{x}\frac{(u+\theta h)_{x}(y)}{\sqrt{1+(u+\theta h)_{x}^{2}(y)}}dydx
  \right)
  \\
  =
  \int_{0}^{x}
  \left(
  \sigma'(\alpha)\beta\frac{u_{x}(y)}{\sqrt{1+u_{x}^{2}(y)}}
  +
  \sigma(\alpha)\frac{h_{x}(y)}{\left(\sqrt{1+u_{x}^{2}(y)}\right)^{3}}
  \right)
  dy
  \\
  -
  \int_{0}^{1}
  \int_{0}^{x}
  \left(
  \sigma'(\alpha)\beta\frac{u_{x}(y)}{\sqrt{1+u_{x}^{2}(y)}}
  +
  \sigma(\alpha)\frac{h_{x}(y)}{\left(\sqrt{1+u_{x}^{2}(y)}\right)^{3}}
  \right)
  dydx.
 \end{multline*}



 Next, we compute the second component of
 \eqref{eq:3.second_derivative1}. Direct computation tells us that,
 \begin{equation*}
  \frac{d}{d\theta}\bigg|_{\theta=0}
  \sigma'(\alpha+\theta\beta)
  \int_{0}^{1}\sqrt{1+(u+\theta h)_{x}^{2}}\,dx
  =
  \int_{0}^{1}
  \left(
  \sigma''(\alpha)
  \beta
  \sqrt{1+u_{x}^{2}(x)}
  +
  \sigma'(\alpha)
  \frac{u_{x}(x)h_{x}(x)}{\sqrt{1+u_{x}^{2}(x)}}
  \right)
  dx.
 \end{equation*}
 Therefore \eqref{eq:3.SecondDerivativeGBE} is deduced.
\end{proof}

Now, we compute the G\^{a}teaux derivative of $\dot{E}$ at
$(\overline{u},\overline{\alpha})$.

\begin{proposition}
 \label{prop:LS.2nd_Derivative_at_CriticalPt}
 For any $(h,\beta)\in X$,
 \begin{align}
  \label{eq:CP_2nd_Derivative_GBE}
  D\dot{E}((\overline{u},\overline{\alpha}),(h,\beta))
  =
  (\sigma(\overline{\alpha})h,\sigma''(\overline{\alpha})\beta).
 \end{align}
\end{proposition}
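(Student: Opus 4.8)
The plan is to obtain~\eqref{eq:CP_2nd_Derivative_GBE} by simply evaluating the general G\^ateaux-derivative formula~\eqref{eq:3.SecondDerivativeGBE} of Proposition~\ref{app:prop6} at the critical point $(\overline{u},\overline{\alpha})$, using the structural description of critical points provided by Proposition~\ref{prop:LS.Property_CriticalPt}.

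First I would invoke Proposition~\ref{prop:LS.Property_CriticalPt} to record that a critical point satisfies $\overline{u}=0$, hence $\overline{u}_{x}\equiv 0$, and $\sigma'(\overline{\alpha})=0$. Substituting $(u,\alpha)=(\overline{u},\overline{\alpha})$ into~\eqref{eq:3.SecondDerivativeGBE}, every summand containing a factor $\overline{u}_{x}$ or a factor $\sigma'(\overline{\alpha})$ vanishes, and moreover $\left(\sqrt{1+\overline{u}_{x}^{2}}\right)^{3}=1=\sqrt{1+\overline{u}_{x}^{2}}$. The first component therefore reduces to
\begin{equation*}
 \sigma(\overline{\alpha})\left(\int_{0}^{x}h_{x}(y)\,dy-\int_{0}^{1}\int_{0}^{x}h_{x}(y)\,dy\,dx\right),
\end{equation*}
and the second component reduces to $\int_{0}^{1}\sigma''(\overline{\alpha})\beta\,dx=\sigma''(\overline{\alpha})\beta$.

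Next I would simplify the first component using the zero-mean constraint defining $H^{1}_{\mathrm{per.ave}}(0,1)$. Since $\int_{0}^{x}h_{x}(y)\,dy=h(x)-h(0)$ and $\int_{0}^{1}h(x)\,dx=0$, one computes $\int_{0}^{1}\int_{0}^{x}h_{x}(y)\,dy\,dx=\int_{0}^{1}(h(x)-h(0))\,dx=-h(0)$, so the first component collapses to $\sigma(\overline{\alpha})\bigl(h(x)-h(0)+h(0)\bigr)=\sigma(\overline{\alpha})h(x)$. Assembling the two components gives precisely $D\dot{E}((\overline{u},\overline{\alpha}),(h,\beta))=(\sigma(\overline{\alpha})h,\sigma''(\overline{\alpha})\beta)$.

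I do not expect a genuine obstacle in this argument: it is a direct specialization of~\eqref{eq:3.SecondDerivativeGBE}. The only mildly delicate point is the bookkeeping with the normalizing double integral in the first component, which is resolved by the $\int_{0}^{1}h\,dx=0$ condition; the substantive input is Proposition~\ref{prop:LS.Property_CriticalPt}, which is what makes the linearization collapse to the diagonal operator $(h,\beta)\mapsto(\sigma(\overline{\alpha})h,\sigma''(\overline{\alpha})\beta)$.
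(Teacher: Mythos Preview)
Your proposal is correct and follows essentially the same approach as the paper: both substitute $(\overline{u},\overline{\alpha})$ into the general formula~\eqref{eq:3.SecondDerivativeGBE}, use $\overline{u}_x\equiv 0$ and $\sigma'(\overline{\alpha})=0$ from Proposition~\ref{prop:LS.Property_CriticalPt} to kill the unwanted terms, and then exploit the zero-mean condition $\int_0^1 h\,dx=0$ to collapse the first component to $\sigma(\overline{\alpha})h$.
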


\begin{proof}
 Plugging $(\overline{u}, \overline{\alpha})$ to
 \eqref{eq:3.SecondDerivativeGBE}, we obtain
 \begin{align}
  \label{eq:CP_2nd_Derivative_GBE1}
  D\dot{E}((\overline{u},\overline{\alpha}),(h,\beta))
  &=
  \Biggl(
  \int_{0}^{x}
  \biggl\{
  \sigma'(\overline{\alpha})
  \beta
  \frac{\overline{u}_{x}(y)}{\sqrt{1+\overline{u}_{x}^{2}(y)}}
  +
  \sigma(\overline{\alpha})\frac{h_{x}(y)}{\left(\sqrt{1+\overline{u}_{x}^{2}(y)}
  \right)^{3}}
  \biggr\}dy
  \\
  &-
  \int_{0}^{1}
  \int_{0}^{x}
  \biggl\{
  \sigma'(\overline{\alpha})
  \beta
  \frac{\overline{u}_{x}(y)}{\sqrt{1+\overline{u}_{x}^{2}(y)}}
  +
  \sigma(\overline{\alpha})
  \frac{h_{x}(y)}{\left(\sqrt{1+\overline{u}_{x}^{2}(y)}
  \right)^{3}}
  \biggr\}
  dydx,\\
  &\int_{0}^{1}
  \biggl\{
  \sigma''(\overline{\alpha})
  \beta
  \sqrt{1+\overline{u}_{x}^{2}(x)}
  +
  \sigma'(\overline{\alpha})
  \frac{\overline{u}_{x}(x)h_{x}(x)}{\sqrt{1+\overline{u}_{x}^{2}(x)}}
  \biggr\}
  dx
  \Biggr).
 \end{align}

 First, we compute the first term of the first component of
 \eqref{eq:CP_2nd_Derivative_GBE1}. Using \eqref{eq:Property_CP} and
 $h\in H_{\text{per.ave}}^{2}(0,1)$, we obtain
 \begin{align*}
  \int_{0}^{x}
  \biggl\{
  \sigma'(\overline{\alpha})
  \beta
  \frac{\overline{u}_{x}(y)}{\sqrt{1+\overline{u}_{x}^{2}(y)}}
  +
  \sigma(\overline{\alpha})
  \frac{h_{x}(y)}{\left(\sqrt{1+\overline{u}_{x}^{2}(y)}
  \right)^{3}}
  \biggr\}dy
  &=
  \int_{0}^{x}\sigma(\overline{\alpha})h_{x}(y)dy\\
  &=
  \sigma(\overline{\alpha})(h(x)-h(0)).
 \end{align*}

 Next, we compute the second term of the first component of
 \eqref{eq:CP_2nd_Derivative_GBE1}. Again using \eqref{eq:Property_CP}
 and $h\in H_{\text{per.ave}}^{2}(0,1)$, we obtain
  \begin{align*}
   \int_{0}^{1}
   \int_{0}^{x}
   \biggl\{
   \sigma'(\overline{\alpha})
   \beta
   \frac{\overline{u}_{x}(y)}{\sqrt{1+\overline{u}_{x}^{2}(y)}}
   +
   \sigma(\overline{\alpha})
   \frac{h_{x}(y)}{\left(\sqrt{1+\overline{u}_{x}^{2}(y)}\right)^{3}}
   \biggr\}
   dydx
   &=
   \int_{0}^{1}\sigma(\overline{\alpha})(h(x)-h(0))dx\\
   &=
   -\sigma(\overline{\alpha})h(0).
  \end{align*}

 Finally, we consider the second component of
 \eqref{eq:CP_2nd_Derivative_GBE1}. Using \eqref{eq:Property_CP} and
 $h\in H_{\text{per.ave}}^{2}(0,1)$, we obtain
  \begin{equation}
   \int_{0}^{1}
   \biggl\{
   \sigma''(\overline{\alpha})\beta
   \sqrt{1+\overline{u}_{x}^{2}(x)}
   +
   \sigma'(\overline{\alpha})\frac{\overline{u}_{x}(x)h_{x}(x)}{\sqrt{1+\overline{u}_{x}^{2}(x)}}
   \biggr\}
   dx
   =
   \int_{0}^{1}\sigma''(\overline{\alpha})\beta dx
   =\sigma''(\overline{\alpha})\beta.
  \end{equation}
 Combining the above computation, we get
 \eqref{eq:CP_2nd_Derivative_GBE}.
 \end{proof}

\subsubsection{Properties of the G\^ateaux derivative of $\dot{E}$}

To apply the abstract result for the \L ojasiewicz-Simon inequality, we
study the properties of the G\^ateaux derivative of $\dot{E}$. Hereafter
we denote
$L(h,\beta)=D\dot{E}((\overline{u},\overline{\alpha}),(h,\beta))$ for
$(h,\beta)\in X$. First, we show that $L$ is a bounded linear operator
on $X$.

\begin{proposition}
 $L$ is a bounded linear operator on $X$.
\end{proposition}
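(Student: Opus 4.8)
The plan is to read the claim straight off the explicit formula established in Proposition~\ref{prop:LS.2nd_Derivative_at_CriticalPt}, namely $L(h,\beta)=D\dot{E}((\overline{u},\overline{\alpha}),(h,\beta))=(\sigma(\overline{\alpha})h,\sigma''(\overline{\alpha})\beta)$ for all $(h,\beta)\in X$. Since $\sigma$ is analytic on $\R$, the numbers $\sigma(\overline{\alpha})$ and $\sigma''(\overline{\alpha})$ are finite reals, so $L$ acts as multiplication by a fixed scalar in each of the two components. The first point to check is that $L$ really maps $X$ into $X$: if $h\in H^1_{\mathrm{per.ave}}(0,1)$ then $(\sigma(\overline{\alpha})h)_x=\sigma(\overline{\alpha})h_x\in L^2(\T)$ and $\int_0^1\sigma(\overline{\alpha})h\,dx=\sigma(\overline{\alpha})\int_0^1 h\,dx=0$, so $\sigma(\overline{\alpha})h\in H^1_{\mathrm{per.ave}}(0,1)$, while $\sigma''(\overline{\alpha})\beta\in\R$; hence $L(h,\beta)\in X$.

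Linearity is then immediate from the formula, since $(h,\beta)\mapsto(\sigma(\overline{\alpha})h,\sigma''(\overline{\alpha})\beta)$ is componentwise scalar multiplication and therefore additive and positively homogeneous. For boundedness I would simply compute, using the inner product \eqref{eq:LS.def_InnerProduct},
\[
\|L(h,\beta)\|_X^2
=\sigma(\overline{\alpha})^2\|h_x\|_{L^2(0,1)}^2+\sigma''(\overline{\alpha})^2\beta^2
\le\max\{\sigma(\overline{\alpha})^2,\sigma''(\overline{\alpha})^2\}\,\|(h,\beta)\|_X^2,
\]
so that $L\in\mathscr{L}(X,X)$ with operator norm at most $\max\{|\sigma(\overline{\alpha})|,|\sigma''(\overline{\alpha})|\}$.

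I do not expect any genuine obstacle here: the only step needing a word is that the range of $L$ stays inside $H^1_{\mathrm{per.ave}}(0,1)$, which holds because multiplication by a constant preserves both the zero-mean constraint and the periodicity of the derivative. If one preferred not to invoke Proposition~\ref{prop:LS.2nd_Derivative_at_CriticalPt}, an alternative route is to bound the general Gâteaux derivative $D[\dot{E}]((u,\alpha),(h,\beta))$ directly from \eqref{eq:3.SecondDerivativeGBE}, using Lemma~\ref{lem:LS.Vol_Aux_Inequality}, the Sobolev embeddings \eqref{eq:LS.Sobolev_Embedding_X} and \eqref{eq:LS.Sobolev_Embedding_Y}, and the fact that $|\sigma|,|\sigma'|,|\sigma''|$ are locally bounded, and then to specialize to $(u,\alpha)=(\overline{u},\overline{\alpha})$; but the short argument via the explicit formula is cleaner and self-contained, so that is the one I would present.
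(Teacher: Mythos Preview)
Your proof is correct and follows essentially the same approach as the paper: both invoke the explicit formula $L(h,\beta)=(\sigma(\overline{\alpha})h,\sigma''(\overline{\alpha})\beta)$ from Proposition~\ref{prop:LS.2nd_Derivative_at_CriticalPt} and bound $\|L(h,\beta)\|_X^2$ by $\max\{\sigma(\overline{\alpha})^2,\sigma''(\overline{\alpha})^2\}\,\|(h,\beta)\|_X^2$. Your write-up is in fact slightly more careful, since you verify explicitly that $L$ maps $X$ into $X$ (the paper leaves this implicit).
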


\begin{proof}
 It is easy to see that $L$ is a linear operator on $X$. We show the
 boundedness of $L$ on $X$.

Since $\sigma(\overline{\alpha}), \sigma''(\overline{\alpha})$ is
independent of $x$, we have
 \begin{equation}
  \|\sigma(\overline{\alpha})h\|_{H_{\mathrm{per.ave}}^{1}(0,1)}\leq|\sigma(\overline{\alpha})|\|h\|_{H_{\mathrm{per.ave}}^{1}(0,1)},\quad
   |\sigma''(\overline{\alpha})\beta|=|\sigma''(\overline{\alpha})||\beta|.
 \end{equation}
 Thus, for $(h,\beta)\in X$
 \begin{align*}
  \|L(h,\beta)\|_{X}^{2}
  &=\|\sigma(\overline{\alpha})h_x\|_{L^{2}(0,1)}^{2}
  +
  |\sigma''(\overline{\alpha})\beta|^{2}
  \\
  &\leq
  \max\{|\sigma(\overline{\alpha})|^{2},|\sigma''(\overline{\alpha})|^{2}\}
  \left(\|h\|_{H_{\text{per.ave}}^{1}(0,1)}^{2}+|\beta|^{2}\right)
  \\
  &=
  \max\{|\sigma(\overline{\alpha})|^{2},|\sigma''(\overline{\alpha})|^{2}\}
  \|(h,\beta)\|_{X}^{2}.
 \end{align*}
 Hence, $L$ is a bounded operator on $X$.
\end{proof}

Next, we show that $L$ is a Fredholm operator on $X$. 

 \begin{proposition}
  \label{prop:LS.L_Fredholm}
  $L$ is a Fredholm operator on $X$. Furthermore,
 \begin{equation}
  \label{eq:LS.Kernel_L}
  \Ker(L)
  =
  \begin{cases}
   \{0\}\times\R,&\text{if}\ \sigma''(\overline{\alpha})=0,\\
   \{0\}\times\{0\},&\text{if}\ \sigma''(\overline{\alpha})\neq 0
  \end{cases}
 \end{equation}
  and
  \begin{equation}
   \label{eq:LS.Image_L}
   \mathscr{R}(L)
   =
    \begin{cases}
     H_{\text{per.ave}}^{1}(0,1)\times\{0\},&
     \text{if}\ \sigma''(\overline{\alpha})=0,\\
     H_{\text{per.ave}}^{1}(0,1)\times\R,&
     \text{if}\ \sigma''(\overline{\alpha})\neq 0,
    \end{cases}
  \end{equation}
  where $\Ker L$ is the kernel of $L$ and $\mathscr{R}(L)$ is the range
  of $L$.
 \end{proposition}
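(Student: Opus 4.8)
The plan is to read off everything from the explicit form of $L$ supplied by Proposition~\ref{prop:LS.2nd_Derivative_at_CriticalPt}: for $(h,\beta)\in X$ we have $L(h,\beta)=(\sigma(\overline{\alpha})h,\sigma''(\overline{\alpha})\beta)$, so $L$ is the direct sum of the scalar multiplication operator $h\mapsto\sigma(\overline{\alpha})h$ on $H^{1}_{\mathrm{per.ave}}(0,1)$ and the scalar multiplication operator $\beta\mapsto\sigma''(\overline{\alpha})\beta$ on $\R$. Since the two factors of $X=H^{1}_{\mathrm{per.ave}}(0,1)\times\R$ are acted on independently and the norm \eqref{eq:LS.def_InnerProduct} splits accordingly, it suffices to analyze each multiplication operator separately and then recombine.

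For the first component, the positivity assumption \eqref{eq:1.Assumption_positivity} gives $\sigma(\overline{\alpha})\geq\Cr{const:GBE_positivity}>0$, so multiplication by the nonzero constant $\sigma(\overline{\alpha})$ is a bounded linear bijection of $H^{1}_{\mathrm{per.ave}}(0,1)$ onto itself, with bounded inverse given by multiplication by $\sigma(\overline{\alpha})^{-1}$; in particular its kernel is $\{0\}$ and its range is all of $H^{1}_{\mathrm{per.ave}}(0,1)$. For the second component, the map $\beta\mapsto\sigma''(\overline{\alpha})\beta$ on $\R$ is the zero map when $\sigma''(\overline{\alpha})=0$, with kernel $\R$ and range $\{0\}$, and is a bijection of $\R$ when $\sigma''(\overline{\alpha})\neq0$, with kernel $\{0\}$ and range $\R$.

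Combining the two computations componentwise yields \eqref{eq:LS.Kernel_L} and \eqref{eq:LS.Image_L} at once: $(h,\beta)\in\Ker L$ iff $\sigma(\overline{\alpha})h=0$ and $\sigma''(\overline{\alpha})\beta=0$, and $(f,c)\in\mathscr{R}(L)$ iff the equation $\sigma(\overline{\alpha})h=f$ is solvable (which it always is, by $h=\sigma(\overline{\alpha})^{-1}f$) and $c$ lies in the range of $\beta\mapsto\sigma''(\overline{\alpha})\beta$. To conclude that $L$ is a Fredholm operator I would then just note that, in both cases, $\Ker L$ is a linear subspace of $X$ of dimension $0$ or $1$ and hence finite-dimensional; that $\mathscr{R}(L)$ equals $H^{1}_{\mathrm{per.ave}}(0,1)\times\{0\}$ or $H^{1}_{\mathrm{per.ave}}(0,1)\times\R$, each being a product of closed subspaces and therefore closed in $X$; and that the codimension of $\mathscr{R}(L)$ in $X$ is $1$ when $\sigma''(\overline{\alpha})=0$ and $0$ when $\sigma''(\overline{\alpha})\neq0$, hence finite in either case. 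There is no real obstacle here: the whole statement is a bookkeeping consequence of the closed-form expression for $L$ together with $\sigma(\overline{\alpha})\neq0$, and the only point worth a word of care --- the closedness of $\mathscr{R}(L)$ --- is immediate once the range is written as a product of closed subspaces.
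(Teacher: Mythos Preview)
Your proof is correct and follows essentially the same approach as the paper: both use the explicit form $L(h,\beta)=(\sigma(\overline{\alpha})h,\sigma''(\overline{\alpha})\beta)$ from Proposition~\ref{prop:LS.2nd_Derivative_at_CriticalPt} together with $\sigma(\overline{\alpha})\neq0$ to read off the kernel and range, and then check the three Fredholm conditions directly. Your write-up is in fact more detailed than the paper's, which simply asserts \eqref{eq:LS.Kernel_L} and \eqref{eq:LS.Image_L} from the explicit form of $L$ and then computes the quotient $X/\mathscr{R}(L)$.
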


\begin{proof}
 We need to show that
  \begin{align*}
   \mathscr{R}(L)\ \text{is closed on}\ X,\quad
   \dim(\Ker(L))<\infty,\quad
   \text{codim}(\mathscr{R}(L))<\infty.
  \end{align*}
 Since $\sigma(\overline{\alpha})\neq 0$, we have \eqref{eq:LS.Kernel_L}
 and \eqref{eq:LS.Image_L} from Proposition
 \ref{prop:LS.2nd_Derivative_at_CriticalPt}. Thus, $\mathscr{R}(L)$ is
 closed on $X$ and $\dim(\Ker(L))<\infty$. Further, since
  \begin{align*}
   \text{codim}(\mathscr{R}(L))
   =\dim(X/\mathscr{R}(L)),
  \end{align*}
 we have
  \begin{align*}
   X/\mathscr{R}(L)
   =
    \begin{cases}
     \{0\}\times\R,&
     \text{if}\ \sigma''(\overline{\alpha})=0, \\
     \{0\}\times\{0\},&
     \text{if}\ \sigma''(\overline{\alpha})\neq 0,
    \end{cases}
  \end{align*}
 hence
 $\text{codim}(\mathscr{R}(L))<\infty$. Therefore,
 $L$ is a Fredholm operator.
\end{proof}

\subsubsection{Properties of Banach space $Y$}

We next consider the properties of $Y$. Especially, we will show that
$Y$ satisfies the conditions \eqref{eq:LS.LS.Abstract_Assumption} in
Theorem \ref{thm:LS.LS}.

\begin{proposition}
 \label{prop:LS.Assumption_dense}
 $Y$ is dense and continuously embedded in $X$.
\end{proposition}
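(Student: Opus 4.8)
The plan is to verify the two assertions — continuity of the embedding and density — separately; both are elementary. For the continuous embedding I would simply compare the norms induced by \eqref{eq:LS.def_InnerProduct} and \eqref{eq:LS.def_InnerProduct_Y}. Since $H^{2}(0,1)\subset H^{1}(0,1)$ and $H^{1}(\T)\subset L^{2}(\T)$, the inclusion $Y\subset X$ holds at the set level, and for every $(u,\alpha)\in Y$ one has
\begin{equation*}
 \|(u,\alpha)\|_{X}^{2}
 =\|u_{x}\|_{L^{2}(0,1)}^{2}+\alpha^{2}
 \leq
 \|u_{x}\|_{L^{2}(0,1)}^{2}+\|u_{xx}\|_{L^{2}(0,1)}^{2}+\alpha^{2}
 =\|(u,\alpha)\|_{Y}^{2},
\end{equation*}
so the embedding $Y\hookrightarrow X$ is continuous (indeed $1$-Lipschitz).

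For density I would reduce to the claim that $H^{2}_{\mathrm{per.ave}}(0,1)$ is dense in $H^{1}_{\mathrm{per.ave}}(0,1)$, since the $\R$-factor is dense in itself and the product norm on $X$ splits accordingly. So fix $(u,\alpha)\in X$. Because $u\in H^{1}_{\mathrm{per.ave}}(0,1)$ we have $u_{x}\in L^{2}(\T)$, and I would approximate $u_{x}$ in $L^{2}(\T)$ by a sequence $w_{N}$ of $C^{\infty}$ periodic functions — for instance the periodic mollifications $u_{x}*\rho_{1/N}$, or the partial sums of the Fourier series of $u_{x}$ — so that $\|u_{x}-w_{N}\|_{L^{2}(0,1)}\to 0$. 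Then set
\begin{equation*}
 v_{N}(x):=\int_{0}^{x}w_{N}(y)\,dy-\int_{0}^{1}\int_{0}^{x}w_{N}(y)\,dy\,dx .
\end{equation*}
One checks directly that $v_{N}\in C^{\infty}([0,1])$, that $(v_{N})_{x}=w_{N}\in H^{1}(\T)$, and that $\int_{0}^{1}v_{N}(x)\,dx=0$; hence $v_{N}\in H^{2}_{\mathrm{per.ave}}(0,1)$. By the definition \eqref{eq:LS.def_InnerProduct} of the inner product, $\|u-v_{N}\|_{H^{1}_{\mathrm{per.ave}}(0,1)}=\|u_{x}-w_{N}\|_{L^{2}(0,1)}\to 0$, so $(v_{N},\alpha)\to(u,\alpha)$ in $X$. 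Since $(u,\alpha)\in X$ was arbitrary, $Y$ is dense in $X$.

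The step requiring the most care — though it is hardly an obstacle — is verifying that the normalized primitive $v_{N}$ genuinely lands in the constrained space $H^{2}_{\mathrm{per.ave}}(0,1)$: one must observe that periodicity of $w_{N}$ alone already guarantees $(v_{N})_{x}=w_{N}\in H^{1}(\T)$ (no mean-zero condition on the derivative is needed), and that the mean-zero condition in the definition of $H^{2}_{\mathrm{per.ave}}$ is imposed on $v_{N}$ itself, which is exactly arranged by subtracting the constant $\int_{0}^{1}\int_{0}^{x}w_{N}\,dy\,dx$. If the Fourier-series approximation is used one should also note that symmetric partial sums of a real-valued function remain real-valued (or simply use periodic mollifiers, for which real-valuedness is immediate). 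Beyond this, the argument is just the standard density of smooth functions in $L^{2}(\T)$.
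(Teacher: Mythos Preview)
Your proof is correct. For the continuous embedding you argue exactly as the paper does, by the same one-line norm comparison. For density, the paper expands $u$ itself in a Fourier series on $[0,1]$ and takes partial sums $u_{N}$, while you instead approximate $u_{x}\in L^{2}(\T)$ by smooth periodic $w_{N}$ and then pass to the normalized primitive $v_{N}$. The two constructions are close cousins (if $w_{N}$ is the Fourier partial sum of $u_{x}$, your $v_{N}$ agrees with the paper's $u_{N}$ whenever $u$ happens to be periodic), but your route is slightly more robust: since $H^{1}_{\mathrm{per.ave}}(0,1)$ does not require $u$ itself to be periodic---only $u_{x}\in L^{2}(\T)$, which for $k=1$ is no constraint beyond $u\in H^{1}(0,1)$---approximating $u_{x}$ and then integrating sidesteps any worry about whether the Fourier partial sums of a non-periodic $u$ converge in the $H^{1}_{\mathrm{per.ave}}$ norm. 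The paper's argument tacitly needs $u(0)=u(1)$ so that term-by-term differentiation gives $(u_{N})_{x}\to u_{x}$ in $L^{2}$; your version avoids this issue altogether.
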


\begin{proof}
 It is clear that $Y$ is continuously embedded in $X$. We show that $Y$
 is dense in $X$. To prove this, it is enough to show that
 $H_{\mathrm{per.ave}}^{2}(0,1)$ is dense in
 $H_{\mathrm{per.ave}}^{1}(0,1)$. For any $u\in
 H_{\mathrm{per.ave}}^{1}(0,1)$, consider the Fourier expansion
 \begin{align*}
  u(x)
  =
  \frac{1}{2}a_{0}+\sum_{k=1}^{\infty}\left(a_{k}\cos\left(2\pi\left(x-\frac{1}{2}\right)\right)+b_{k}\sin\left(2\pi\left(x-\frac{1}{2}\right)\right)\right).
 \end{align*}
 Since
 \begin{align*}
  \int_{0}^{1}u(x)dx=0,
 \end{align*}
 we find $a_{0}=0$. Thus, for $N\in\N$, define
   \begin{align*}
   u_{N}(x)
   =
   \sum_{k=1}^{N}\left(a_{k}\cos\left(2\pi\left(x-\frac{1}{2}\right)\right)+b_{k}\sin\left(2\pi\left(x-\frac{1}{2}\right)\right)\right).
   \end{align*}
 It is clear that $u_{N}\in H_{\mathrm{per.ave}}^{2}(0,1)$. Since $u\in
 H_{\mathrm{per.ave}}^{1}(0,1)$,
 \begin{align*}
  u_{N}\to u\ \text{in}\ H_{\mathrm{per.ave}}^{1}(0,1),
 \end{align*}
 hence $H_{\mathrm{per.ave}}^{2}(0,1)$ is dense in
 $H_{\mathrm{per.ave}}^{1}(0,1)$.

\end{proof}

Next, we show that the critical point of $E$ is in $Y$.

\begin{proposition}
 \label{prop:LS.CriticalPt_in_Y}
 Let $(\overline{u},\overline{\alpha})\in X$ be a critical point of
 $E$.Then, $(\overline{u},\overline{\alpha})\in Y$.
\end{proposition}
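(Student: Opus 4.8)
The plan is to reduce the statement to the structure theorem for critical points already obtained. By Proposition~\ref{prop:LS.Property_CriticalPt}, every critical point $(\overline{u},\overline{\alpha})\in X$ of $E$ satisfies $\overline{u}=0$ and $\sigma'(\overline{\alpha})=0$. For the present proposition only the first identity is relevant: once $\overline{u}$ is known to be the zero function, its membership in $H_{\mathrm{per.ave}}^{2}(0,1)$ is immediate.

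Concretely, the step I would carry out is the following. Observe that $\overline{u}\equiv 0$ lies in $H^{2}(0,1)$, that its weak derivative $\overline{u}_{x}\equiv 0$ lies in $H^{1}(\T)$, and that $\int_{0}^{1}\overline{u}(x)\,dx=0$; by the very definition of $H_{\mathrm{per.ave}}^{2}(0,1)$ this yields $\overline{u}\in H_{\mathrm{per.ave}}^{2}(0,1)$. Since moreover $\overline{\alpha}\in\R$, we conclude $(\overline{u},\overline{\alpha})\in H_{\mathrm{per.ave}}^{2}(0,1)\times\R=Y$, which is exactly the claim.

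There is essentially no obstacle here: all of the content lies in Proposition~\ref{prop:LS.Property_CriticalPt}, whose proof uses the positivity assumption~\eqref{eq:1.Assumption_positivity} (so that $\sigma(\overline{\alpha})\geq\Cr{const:GBE_positivity}>0$ may be divided out of the first component of $\dot{E}(\overline{u},\overline{\alpha})=0$) together with the periodicity and zero-mean constraints built into $H_{\mathrm{per.ave}}^{1}(0,1)$, which force $\overline{u}=0$. If a self-contained argument were preferred, one could instead expand $\dot{E}(\overline{u},\overline{\alpha})=0$ directly, divide the first component by $\sigma(\overline{\alpha})$, differentiate in $x$ to get $\overline{u}_{x}/\sqrt{1+\overline{u}_{x}^{2}}=0$ almost everywhere, and then invoke $\int_{0}^{1}\overline{u}(x)\,dx=0$; but this merely re-derives Proposition~\ref{prop:LS.Property_CriticalPt}, so citing it is the cleaner route.
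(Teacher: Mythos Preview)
Your proposal is correct and follows exactly the same approach as the paper: cite Proposition~\ref{prop:LS.Property_CriticalPt} to obtain $\overline{u}=0$, and then conclude $(\overline{u},\overline{\alpha})\in Y$. The paper's proof is in fact even terser than yours, consisting of precisely those two sentences.
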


\begin{proof}
 From Proposition \ref{prop:LS.Property_CriticalPt},
 $\overline{u}=0$. Therefore $(\overline{u},\overline{\alpha})\in Y$.
\end{proof}

Next, we show that the inverse image of $Y$ with respect to $L$ is a subset
of $Y$.

\begin{proposition}
 \label{prop:LS.inverse_image_L}
 $L^{-1}(Y)$ is a subset of $Y$.
\end{proposition}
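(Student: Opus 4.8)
The plan is to invoke the explicit description of $L$ obtained in Proposition~\ref{prop:LS.2nd_Derivative_at_CriticalPt}, namely that $L(h,\beta)=D\dot{E}((\overline{u},\overline{\alpha}),(h,\beta))=(\sigma(\overline{\alpha})h,\sigma''(\overline{\alpha})\beta)$ for $(h,\beta)\in X$, and to exploit the fact that $L$ acts diagonally. I would start from an arbitrary $(h,\beta)\in X$ satisfying $L(h,\beta)\in Y$ and simply unpack what this means componentwise with respect to the product decomposition $Y=H^{2}_{\mathrm{per.ave}}(0,1)\times\R$.

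By the definition of $Y$, the condition $L(h,\beta)\in Y$ is equivalent to $\sigma(\overline{\alpha})h\in H^{2}_{\mathrm{per.ave}}(0,1)$, the second component $\sigma''(\overline{\alpha})\beta$ being automatically a real number. Hence the only content of the hypothesis is this regularity-and-periodicity statement about the first slot. I would then use the positivity assumption \eqref{eq:1.Assumption_positivity}: since $\sigma(\overline{\alpha})\geq\Cr{const:GBE_positivity}>0$, the scalar $\sigma(\overline{\alpha})$ is nonzero, so multiplication by the constant $1/\sigma(\overline{\alpha})$ maps $H^{2}_{\mathrm{per.ave}}(0,1)$ into itself; applying it to $\sigma(\overline{\alpha})h$ gives $h\in H^{2}_{\mathrm{per.ave}}(0,1)$. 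Together with $\beta\in\R$, this yields $(h,\beta)\in H^{2}_{\mathrm{per.ave}}(0,1)\times\R=Y$, which is precisely the assertion $L^{-1}(Y)\subset Y$.

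I do not anticipate any genuine obstacle here: the statement reduces to the observation that multiplication by the nonzero constant $\sigma(\overline{\alpha})$ is a linear isomorphism of $H^{2}_{\mathrm{per.ave}}(0,1)$ onto itself, so no ``regularity gain'' argument is needed. The only point requiring care is to record that the hypotheses enter exclusively through the positivity condition, which guarantees $\sigma(\overline{\alpha})\neq0$; the diagonal form of $L$ from Proposition~\ref{prop:LS.2nd_Derivative_at_CriticalPt} then lets the two components be handled independently.
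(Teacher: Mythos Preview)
Your proposal is correct and follows essentially the same approach as the paper: both use the diagonal form $L(h,\beta)=(\sigma(\overline{\alpha})h,\sigma''(\overline{\alpha})\beta)$ and the positivity assumption \eqref{eq:1.Assumption_positivity} to divide out the nonzero constant $\sigma(\overline{\alpha})$ and conclude $h\in H^{2}_{\mathrm{per.ave}}(0,1)$. The only cosmetic difference is that the paper cites Proposition~\ref{prop:LS.Property_CriticalPt} for the formula for $L$ (likely a typo for Proposition~\ref{prop:LS.2nd_Derivative_at_CriticalPt}, which you cite correctly).
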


\begin{proof}
 We will show that $(h,\beta)\in Y$ provided $L(h,\beta)\in Y$. Since
 $\beta\in\R$, it is enough to show $h\in
 H_{\mathrm{per.ave}}^{2}(0,1)$. By Proposition
 \ref{prop:LS.2nd_Derivative_at_CriticalPt}
 \begin{align*}
  L(h,\beta)
  =
  (
  \sigma(\overline{\alpha})h,\sigma''(\overline{\alpha})\beta),
 \end{align*}
 hence $\sigma(\overline{\alpha})h\in
 H_{\mathrm{per.ave}}^{2}(0,1)$. Because $\sigma(\overline{\alpha})\geq
 \Cr{const:GBE_positivity}\neq 0$ from
 \eqref{eq:1.Assumption_positivity}, we find $h\in
 H_{\mathrm{per.ave}}^{2}(0,1)$.
\end{proof}

Next, we study the image of $\dot{E}$ on $Y$.

\begin{proposition}
 \label{prop:LS.Image_on_Y}
 $\dot{E}(Y)$ is a subset of $Y$.
\end{proposition}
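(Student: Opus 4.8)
The plan is to check directly that each of the two components of $\dot{E}(u,\alpha)$, as given in Proposition~\ref{prop:LS.Conti_diff_X}, lies in the appropriate factor of $Y=H_{\mathrm{per.ave}}^{2}(0,1)\times\R$. The second component $\sigma'(\alpha)\int_{0}^{1}\sqrt{1+u_{x}^{2}}\,dx$ is a real number, hence lies in $\R$ with nothing to prove. All the work concerns the first component, which I denote
\[
 w(x):=\sigma(\alpha)\left\{\int_{0}^{x}\frac{u_{x}(y)}{\sqrt{1+u_{x}^{2}(y)}}\,dy-\int_{0}^{1}\int_{0}^{x}\frac{u_{x}(y)}{\sqrt{1+u_{x}^{2}(y)}}\,dy\,dx\right\},
\]
and the goal is to show $w\in H_{\mathrm{per.ave}}^{2}(0,1)$ whenever $u\in H_{\mathrm{per.ave}}^{2}(0,1)$.

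First I would identify the weak derivatives of $w$. Since $u_{x}\in H^{1}(\T)$ embeds continuously into $C(\T)$ and the scalar function $g(\xi):=\xi/\sqrt{1+\xi^{2}}$ is $C^{\infty}$ with $|g'(\xi)|=1/(\sqrt{1+\xi^{2}})^{3}\le 1$ (so $g$ is globally Lipschitz), the chain rule for Sobolev functions in one variable gives $g(u_{x})\in H^{1}(0,1)$ with weak derivative $g'(u_{x})u_{xx}$. Hence $w$ is twice weakly differentiable with
\[
 w_{x}=\sigma(\alpha)\,\frac{u_{x}}{\sqrt{1+u_{x}^{2}}},\qquad
 w_{xx}=\sigma(\alpha)\,\frac{u_{xx}}{(\sqrt{1+u_{x}^{2}})^{3}}.
\]
Using the pointwise bounds $|u_{x}/\sqrt{1+u_{x}^{2}}|\le 1$ and $1/(\sqrt{1+u_{x}^{2}})^{3}\le 1$ (of the kind already collected in Lemma~\ref{lem:LS.Vol_Aux_Inequality}, or immediate from $\sqrt{1+\xi^{2}}\ge\max\{1,|\xi|\}$) together with $u_{x},u_{xx}\in L^{2}(0,1)$, both $w_{x}$ and $w_{xx}$ lie in $L^{2}(0,1)$; since $w$ is also bounded on $[0,1]$, we conclude $w\in H^{2}(0,1)$.

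It then remains to verify the two extra defining conditions of $H_{\mathrm{per.ave}}^{2}(0,1)$. For periodicity: $w_{x}=\sigma(\alpha)g(u_{x})$ is the composition of the periodic function $u_{x}\in H^{1}(\T)$ with a fixed scalar map, hence $w_{x}(0)=w_{x}(1)$, and being also in $H^{1}(0,1)$ it lies in $H^{1}(\T)=H^{2-1}(\T)$. For the vanishing mean,
\[
 \int_{0}^{1}w(x)\,dx=\sigma(\alpha)\left(\int_{0}^{1}\int_{0}^{x}\frac{u_{x}(y)}{\sqrt{1+u_{x}^{2}(y)}}\,dy\,dx-\int_{0}^{1}\int_{0}^{x}\frac{u_{x}(y)}{\sqrt{1+u_{x}^{2}(y)}}\,dy\,dx\right)=0
\]
by the very choice of the subtracted constant. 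Therefore $w\in H_{\mathrm{per.ave}}^{2}(0,1)$, and consequently $\dot{E}(u,\alpha)\in Y$.

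There is no substantial obstacle here; the only point deserving care is the use of the Sobolev chain rule to differentiate $g(u_{x})$, which is legitimate because $g\in C^{\infty}(\R)$ has bounded first derivative and we work in one space dimension, so $u_{x}$ is continuous and may be approximated by smooth periodic functions in $H^{1}(\T)$. The required bounds on $g$ and $g'$ follow from $\sqrt{1+\xi^{2}}\ge\max\{1,|\xi|\}$, exactly as exploited in Lemma~\ref{lem:LS.Vol_Aux_Inequality}.
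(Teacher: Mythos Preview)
Your proof is correct and follows essentially the same approach as the paper's: both compute the weak derivatives $w_{x}=\sigma(\alpha)u_{x}/\sqrt{1+u_{x}^{2}}$ and $w_{xx}=\sigma(\alpha)u_{xx}/(1+u_{x}^{2})^{3/2}$, bound them in $L^{2}$ via $\sqrt{1+\xi^{2}}\ge 1$, check that $w_{x}$ is periodic because $u_{x}$ is, and verify the vanishing mean from the subtracted constant. The only cosmetic difference is that you explicitly invoke the Sobolev chain rule for $g(u_{x})$, whereas the paper differentiates directly; you are also slightly briefer about the second component, where the paper notes that $u_{x}\in L^{2}$ ensures the integral is finite.
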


\begin{proof}
 By Proposition \ref{prop:LS.Conti_diff_X}, we will show for
 $(u,\alpha)\in Y$,
 \begin{equation}
  \label{eq:LS.Image_on_Y.proof1}
  \sigma(\alpha)
   \left\{
    \int_{0}^{x}\frac{u_{x}(y)}{\sqrt{1+u_{x}^{2}(y)}}dy-\int_{0}^{1}\int_{0}^{x}\frac{u_{x}(y)}{\sqrt{1+u_{x}^{2}(y)}}dydx
   \right\}
   \in H_{\mathrm{per.ave}}^{2}(0,1),
 \end{equation}
 and
 \begin{equation}
  \label{eq:LS.Image_on_Y.proof2}
  \sigma'(\alpha)\int_{0}^{1}\sqrt{1+u_{x}^{2}}\,dx\in\R.
 \end{equation}

 First, we show \eqref{eq:LS.Image_on_Y.proof1}.
 To do this, from
 \begin{equation*}
  \int_0^1
   \sigma(\alpha)
   \left\{\int_{0}^{\xi}
    \frac{u_{x}(y)}{\sqrt{1+u_{x}^{2}(y)}}
    dy
    -
    \int_{0}^{1}
    \int_{0}^{x}
    \frac{u_{x}(y)}{\sqrt{1+u_{x}^{2}(y)}}
    dydx\right\}
   \,d\xi=0,
 \end{equation*}
 the integral average of the first component of $\dot{E}(u,\alpha)$ has
 vanished. Next, we take a derivative with respect to $x$. Then, we
 obtain
  \begin{align*}
   \frac{d}{dx}\left(\sigma(\alpha)\int_{0}^{x}\frac{u_{x}(y)}{\sqrt{1+u_{x}^{2}(y)}}dy\right)
   =
   \sigma(\alpha)\frac{u_{x}(x)}{\sqrt{1+u_{x}^{2}(x)}}.
  \end{align*}
 Since $u\in H_{\mathrm{per.ave}}^{2}(0,1)$,
 $\sigma(\alpha)\frac{u_{x}(x)}{\sqrt{1+u_{x}^{2}(x)}}$ is a periodic
 function in $(0,1)$. Again using $u\in H_{\mathrm{per.ave}}^{2}(0,1)$
 and $\sqrt{1+u_x^2(x)}\geq 1$,
 \begin{align*}
  \int_{0}^{1}
  \left|\sigma(\alpha)\frac{u_{x}(x)}{\sqrt{1+u_{x}^{2}(x)}}\right|^{2}dx
  \leq|\sigma(\alpha)|^{2}\int_{0}^{1}|u_{x}(x)|^{2}dx
  <\infty.
 \end{align*}
  Thus,
  \begin{align*}
   \frac{d}{dx}\left(\sigma(\alpha)\int_{0}^{x}\frac{u_{x}(y)}{\sqrt{1+u_{x}^{2}(y)}}dy\right)
   \in L^{2}(0,1).
  \end{align*}
 Next, retaking a derivative, we find
 \begin{equation}
  \frac{d^{2}}{dx^{2}}\left(\sigma(\alpha)\int_{0}^{x}\frac{u_{x}(y)}{\sqrt{1+u_{x}^{2}(y)}}dy\right)
   =\sigma(\alpha)\frac{u_{xx}(x)}{\left(\sqrt{1+u_{x}^{2}(x)}\right)^{3}}.
 \end{equation}
 Since $u\in H_{\mathrm{per.ave}}^{2}(0,1)$ and
 $\sqrt{1+u_x^2(x)}\geq1$, we have
 \begin{align*}
  \int_{0}^{1}\left|\sigma(\alpha)\frac{u_{xx}(x)}{\left(\sqrt{1+u_{x}^{2}(x)}\right)^{3}}\right|^{2}dx
  \leq|\sigma(\alpha)|^{2}\int_{0}^{1}|u_{xx}(x)|^{2}dx
  <\infty,
 \end{align*}
 namely
 \begin{align*}
  \frac{d^{2}}{dx^{2}}\left(\sigma(\alpha)\int_{0}^{x}\frac{u_{x}(y)}{\sqrt{1+u_{x}^{2}(y)}}dy\right)
  \in L^{2}(0,1).
 \end{align*}
 Therefore \eqref{eq:LS.Image_on_Y.proof1} holds.
%
 From $u\in H_{\mathrm{per.ave}}^{2}(0,1)$, we find $u_{x}\in
 L^{2}(0,1)$, hence \eqref{eq:LS.Image_on_Y.proof2} holds.
 Therefore, we find $\dot{E}(Y)\subset Y$.
\end{proof}

Next, we show the fifth condition of
\eqref{eq:LS.LS.Abstract_Assumption}, namely the Fr\'{e}chet
differentiability of $\dot{E}$ on $Y$.
\begin{proposition}
 \label{prop:LS.Frechet_Y}
 $\dot{E}:Y\to Y$ is continuously Fr\'{e}chet differentiable on $Y$.
 \end{proposition}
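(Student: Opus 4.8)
The plan is to apply Lemma~\ref{lem:LS.Gateau_Frechet} with the ambient Banach space there taken to be $Y$ (both as source and as target). Thus I would verify three things: (i) $\dot{E}$ is Gâteaux differentiable at every point of $Y$, with Gâteaux derivative given by the formula \eqref{eq:3.SecondDerivativeGBE} of Proposition~\ref{app:prop6}, now read for $(u,\alpha),(h,\beta)\in Y$; (ii) for each fixed $(u,\alpha)\in Y$ the map $(h,\beta)\mapsto D\dot{E}((u,\alpha),(h,\beta))$ is linear and bounded from $Y$ into $Y$; and (iii) the assignment $(u,\alpha)\mapsto D\dot{E}((u,\alpha),\cdot)$ is continuous from $Y$ into $\mathscr{L}(Y,Y)$. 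Lemma~\ref{lem:LS.Gateau_Frechet} then gives Fréchet differentiability at every point of $Y$, and the continuity in (iii) is exactly continuity of the Fréchet derivative, so $\dot{E}$ is continuously Fréchet differentiable on $Y$, which is the claim.

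For (ii) I would differentiate the first component of \eqref{eq:3.SecondDerivativeGBE} twice in $x$: its $x$-average vanishes by construction; its first $x$-derivative equals $\sigma'(\alpha)\beta\,\frac{u_x}{\sqrt{1+u_x^2}}+\sigma(\alpha)\,\frac{h_x}{(\sqrt{1+u_x^2})^3}$; and its second $x$-derivative equals $\sigma'(\alpha)\beta\,\frac{u_{xx}}{(\sqrt{1+u_x^2})^3}+\sigma(\alpha)\bigl(\frac{h_{xx}}{(\sqrt{1+u_x^2})^3}-\frac{3u_xu_{xx}h_x}{(\sqrt{1+u_x^2})^5}\bigr)$. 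Since $(u,\alpha),(h,\beta)\in Y$, the embedding \eqref{eq:LS.Sobolev_Embedding_Y} gives $u_x,h_x\in L^\infty(0,1)$ while $u_{xx},h_{xx}\in L^2(0,1)$; together with the elementary bounds $\bigl|\frac{\xi}{\sqrt{1+\xi^2}}\bigr|,\bigl|\frac{1}{(\sqrt{1+\xi^2})^3}\bigr|,\bigl|\frac{\xi}{(\sqrt{1+\xi^2})^5}\bigr|\le1$ and an $L^2$–$L^\infty$ Hölder split, the $Y$-norm of the first component is bounded by a constant depending only on $\|u\|_{H^2_{\mathrm{per.ave}}(0,1)}$ and $|\alpha|$ times $\|(h,\beta)\|_Y$; periodicity of the functions involved follows from that of $u_x,h_x$. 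The second component of \eqref{eq:3.SecondDerivativeGBE} is a real number, estimated in the same way, and linearity in $(h,\beta)$ is immediate from the formula.

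For (iii) I would let $(u_n,\alpha_n)\to(u,\alpha)$ in $Y$, so that $\alpha_n\to\alpha$, $u_{n,x}\to u_x$ in $L^\infty(0,1)$ by \eqref{eq:LS.Sobolev_Embedding_Y}, $u_{n,xx}\to u_{xx}$ in $L^2(0,1)$, and $\sigma(\alpha_n)\to\sigma(\alpha)$, $\sigma'(\alpha_n)\to\sigma'(\alpha)$, $\sigma''(\alpha_n)\to\sigma''(\alpha)$ since $\sigma$ is analytic, hence $C^\infty$. Then I would bound $\|D\dot{E}((u_n,\alpha_n),(h,\beta))-D\dot{E}((u,\alpha),(h,\beta))\|_Y$ \emph{uniformly} over $\|(h,\beta)\|_Y\le1$ by splitting each of the finitely many terms above and in the second component into three kinds of pieces: one in which only the coefficients $\sigma^{(j)}$ differ, bounded by $|\sigma^{(j)}(\alpha_n)-\sigma^{(j)}(\alpha)|$ times a uniformly bounded factor; one in which only the rational functions of $u_x$ differ, whose $L^\infty$ norm is controlled through Lemma~\ref{lem:LS.Vol_Aux_Inequality} (in particular \eqref{eq:LS.Vol_Aux_Inequality}) by a constant times $\|u_{n,x}-u_x\|_{L^\infty(0,1)}$; and one carrying the factor $u_{n,xx}-u_{xx}$, estimated in $L^2(0,1)$ against uniformly bounded $L^\infty$ factors. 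In every case the rough factors $u_{xx},u_{n,xx},h_{xx}$ are paired, by Hölder, with an $L^\infty$ factor that is either small or uniformly bounded, so each piece tends to $0$ uniformly in $(h,\beta)$; hence $\|D\dot{E}((u_n,\alpha_n),\cdot)-D\dot{E}((u,\alpha),\cdot)\|_{\mathscr{L}(Y,Y)}\to0$.

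Finally, for (i) I would observe that the componentwise computation in the proof of Proposition~\ref{app:prop6} actually yields convergence of the difference quotient $\theta^{-1}\bigl(\dot{E}(u+\theta h,\alpha+\theta\beta)-\dot{E}(u,\alpha)\bigr)$ to $D\dot{E}((u,\alpha),(h,\beta))$ in the norm of $Y$, not merely componentwise: expanding the integrands by Taylor's theorem and the mean value theorem, their deviations from the limiting integrands are controlled, via Lemma~\ref{lem:LS.Vol_Aux_Inequality} and the same $L^2$–$L^\infty$ Hölder split as in (ii)–(iii), by a constant times $\theta$, so letting $\theta\to0$ gives convergence in $Y$ (alternatively, once (ii) and (iii) are known, Gâteaux differentiability follows from the fundamental theorem of calculus applied to $\theta\mapsto\dot{E}(u+\theta h,\alpha+\theta\beta)$ as a $Y$-valued curve). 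With (i)–(iii) in hand, Lemma~\ref{lem:LS.Gateau_Frechet} applies and the proposition follows. I expect the main obstacle to be point (iii): obtaining the operator-norm bound \emph{uniformly} in $(h,\beta)$ even though $u_{xx},h_{xx}$ lie only in $L^2$. This is resolved by the structural observation that these factors always occur multiplied by $L^\infty$ quantities — bounded rational functions of $u_x$, the quantities $u_x,h_x$ themselves, or small differences thereof quantified by Lemma~\ref{lem:LS.Vol_Aux_Inequality} — so an $L^2$–$L^\infty$ Hölder split always suffices.
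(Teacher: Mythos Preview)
Your proposal is correct and follows essentially the same route as the paper: reduce to Lemma~\ref{lem:LS.Gateau_Frechet}, then verify that $D\dot{E}((u,\alpha),\cdot)\in\mathscr{L}(Y)$ (the paper's Lemma~\ref{app:lem1}) and that $(u,\alpha)\mapsto D\dot{E}((u,\alpha),\cdot)$ is continuous in $\mathscr{L}(Y)$ (the paper's Lemma~\ref{app:lem2}), using the same second-derivative computation, the Sobolev bound \eqref{eq:LS.Sobolev_Embedding_Y}, and the Lipschitz estimates of Lemma~\ref{lem:LS.Vol_Aux_Inequality}. If anything, you are slightly more careful than the paper in isolating step~(i)---that the Gâteaux limit from Proposition~\ref{app:prop6} must hold in the $Y$-norm, not merely in $X$---whereas the paper tacitly carries the formula over; your remark that this follows from the same $L^2$--$L^\infty$ splitting (or from the fundamental theorem of calculus once (ii)--(iii) are known) is exactly right.
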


To prove Proposition \ref{prop:LS.Frechet_Y}, we will apply Lemma
\ref{lem:LS.Gateau_Frechet}. To show Fr\'{e}chet differentiability
at $(\underline{u},\underline{\alpha})\in Y$, let
$O:=H^2_{\mathrm{per.ave}}(0,1)\times(\underline{\alpha}-1,\underline{\alpha}+1)\subset
Y$ be a neighborhood at $(\underline{u},\underline{\alpha})$ on $Y$. We
will show that $Y\ni(h,\beta)\mapsto
D\dot{E}((u,\alpha),(h,\beta))\in Y$ is a bounded linear operator for
any $(u,\alpha)\in O $ and $O\ni (u,\alpha)\mapsto
D\dot{E}((u,\alpha),\cdot)\in \mathscr{L}(Y)$ is continuous on $Y$.

\begin{lemma}
 \label{app:lem1} 
 For any fixed $(u,\alpha)\in O$, the G\^a{}teaux derivative
 $D\dot{E}((u,\alpha),\cdot):Y\to Y$ is a bounded linear operator on
 $Y$.
\end{lemma}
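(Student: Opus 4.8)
The plan is to read off the required boundedness directly from the explicit formula for the Gâteaux derivative established in Proposition~\ref{app:prop6}. With $(u,\alpha)\in O$ fixed, the numbers $\sigma(\alpha),\sigma'(\alpha),\sigma''(\alpha)$ are constants, and each term of that formula depends on $(h,\beta)$ linearly through exactly one of $h_x$, $h_{xx}$, $\beta$; hence $(h,\beta)\mapsto D\dot{E}((u,\alpha),(h,\beta))$ is linear, and all that remains is the estimate $\|D\dot{E}((u,\alpha),(h,\beta))\|_Y\le C(u,\alpha)\,\|(h,\beta)\|_Y$ together with the check that the first component really lies in $H^2_{\mathrm{per.ave}}(0,1)$ (so that the expression is an element of $Y$ at all).

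To that end I would write the first component as $v(x)=\int_0^x g(y)\,dy-\int_0^1\int_0^x g(y)\,dy\,dx$, where $g(y)=\sigma'(\alpha)\beta\,\frac{u_x(y)}{\sqrt{1+u_x^2(y)}}+\sigma(\alpha)\,\frac{h_x(y)}{\bigl(\sqrt{1+u_x^2(y)}\bigr)^3}$. By construction $\int_0^1 v\,dx=0$ and $v_x=g$; since $u_x,h_x\in H^1(\T)$ are continuous periodic and the maps $\xi\mapsto\xi/\sqrt{1+\xi^2}$, $\xi\mapsto(\sqrt{1+\xi^2})^{-3}$ are smooth, bounded, with bounded derivatives, $g\in H^1(\T)$, so $v\in H^2_{\mathrm{per.ave}}(0,1)$. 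Differentiating the product and chain rules once more gives
\[
v_{xx}=\sigma'(\alpha)\beta\,\frac{u_{xx}}{\bigl(\sqrt{1+u_x^2}\bigr)^3}
+\sigma(\alpha)\left(\frac{h_{xx}}{\bigl(\sqrt{1+u_x^2}\bigr)^3}-3\,\frac{u_x\,u_{xx}\,h_x}{\bigl(\sqrt{1+u_x^2}\bigr)^5}\right),
\]
which is in $L^2(0,1)$ for the same reasons (this also re-confirms $v\in H^2$).

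I would then estimate in the norm $\|(v,\gamma)\|_Y^2=\|v_x\|_{L^2(0,1)}^2+\|v_{xx}\|_{L^2(0,1)}^2+\gamma^2$. Using the pointwise bounds $|\xi|/\sqrt{1+\xi^2}\le 1$ and $(\sqrt{1+\xi^2})^{-k}\le 1$ (the same elementary facts behind Lemma~\ref{lem:LS.Vol_Aux_Inequality}), one gets $\|v_x\|_{L^2}\le|\sigma'(\alpha)|\,|\beta|+|\sigma(\alpha)|\,\|h_x\|_{L^2}$; the first two summands of $v_{xx}$ are bounded by $|\sigma'(\alpha)|\,|\beta|\,\|u_{xx}\|_{L^2}$ and $|\sigma(\alpha)|\,\|h_{xx}\|_{L^2}$; and, since $|u_x|/(\sqrt{1+u_x^2})^5\le 1$, the cross term is bounded by $3|\sigma(\alpha)|\,\|h_x\|_{L^\infty}\,\|u_{xx}\|_{L^2}\le 3\sqrt2\,|\sigma(\alpha)|\,\|u\|_{H^2_{\mathrm{per.ave}}}\,\|h\|_{H^2_{\mathrm{per.ave}}}$ by the Sobolev embedding \eqref{eq:LS.Sobolev_Embedding_Y}. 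For the scalar second component, $\int_0^1\sqrt{1+u_x^2}\,dx\le 1+\|u_x\|_{L^2}$ and $\int_0^1|h_x|\,dx\le\|h_x\|_{L^2}$ by Cauchy--Schwarz on $[0,1]$ bound it by $|\sigma''(\alpha)|\,|\beta|\,(1+\|u_x\|_{L^2})+|\sigma'(\alpha)|\,\|h_x\|_{L^2}$. Collecting these and using $|\beta|,\ \|h_x\|_{L^2},\ \|h_{xx}\|_{L^2},\ \|h\|_{H^2_{\mathrm{per.ave}}}\le\|(h,\beta)\|_Y$ yields $\|D\dot{E}((u,\alpha),(h,\beta))\|_Y\le C(u,\alpha)\,\|(h,\beta)\|_Y$, with $C(u,\alpha)$ depending only on $\|u\|_{H^2_{\mathrm{per.ave}}}$ and on $|\sigma(\alpha)|,|\sigma'(\alpha)|,|\sigma''(\alpha)|$.

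The only step that is not pure bookkeeping is controlling the cross term $u_x u_{xx} h_x/(\sqrt{1+u_x^2})^5$ in $v_{xx}$: naively it is cubic in the data, but the factor $|u_x|/(\sqrt{1+u_x^2})^5\le 1$ absorbs $u_x$, and the embedding $H^2_{\mathrm{per.ave}}(0,1)\hookrightarrow C^1$ coming from \eqref{eq:LS.Sobolev_Embedding_Y} lets one pair $\|h_x\|_{L^\infty}$ with $\|u_{xx}\|_{L^2}$; everything else reduces to the pointwise inequalities for $\sqrt{1+\xi^2}$ and its derivatives and to Cauchy--Schwarz on the unit interval.
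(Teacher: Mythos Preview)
Your argument is correct and follows essentially the same route as the paper: compute $v_x$ and $v_{xx}$ explicitly from the formula in Proposition~\ref{app:prop6}, use the pointwise bounds $|\xi|/\sqrt{1+\xi^2}\le 1$ and $(\sqrt{1+\xi^2})^{-k}\le 1$ term by term, and handle the only nontrivial cross term $u_x u_{xx} h_x/(\sqrt{1+u_x^2})^5$ via $\|h_x\|_{L^\infty}\le\sqrt2\,\|h\|_{H^2_{\mathrm{per.ave}}}$ from \eqref{eq:LS.Sobolev_Embedding_Y}. The only cosmetic differences are that you add an explicit check that $v\in H^2_{\mathrm{per.ave}}(0,1)$ and use slightly cruder (but equally valid) constants in a couple of places.
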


\begin{proof}
 It is clear that
 $D\dot{E}((u,\alpha),(h,\beta))$ is linear in
 $(h,\beta)$.
 Here, we will show that $D\dot{E}((u,\alpha),\cdot)$ is bounded
 operator on $Y$. For $(h,\beta)\in Y$, we denote $ \|D\dot{E}((u,\alpha),(h,\beta))\|_Y^2
 =
 J_1^2+J_2^2$, where
 \begin{equation}
  \begin{split}
   J_1
   &=
   \biggl\|
   \int_{0}^{x}\left\{\sigma'(\alpha)\beta\frac{u_{x}(y)}{\sqrt{1+u_{x}^{2}(y)}}+\sigma(\alpha)\frac{h_{x}(y)}{\left(\sqrt{1+u_{x}^{2}(y)}\right)^{3}}\right\}dy
   \\
   &\qquad
   -\int_{0}^{1}\int_{0}^{x}\left\{\sigma'(\alpha)\beta\frac{u_{x}(y)}{\sqrt{1+u_{x}^{2}(y)}}+\sigma(\alpha)\frac{h_{x}(y)}{\left(\sqrt{1+u_{x}^{2}(y)}\right)^{3}}\right\}dydx
   \biggr\|_{H^2_{\mathrm{per.ave}}(0,1)}, \\
   J_2
   &=
   \left|
   \int_{0}^{1}\left\{\sigma''(\alpha)\beta\sqrt{1+u_{x}^{2}(x)}+\sigma'(\alpha)\frac{u_{x}(x)h_{x}(x)}{\sqrt{1+u_{x}^{2}(x)}}\right\}dx\right|.
  \end{split}
 \end{equation}

 First, we consider $J_1$. Taking the derivative of the first component
 of $D\dot{E}((u,\alpha),(h,\beta))$, we obtain
 \begin{equation}
  \begin{split}
   &\quad
   \frac{d}{dx}
   \left(
   \int_{0}^{x}
   \left\{
   \sigma'(\alpha)\beta\frac{u_{x}(y)}{\sqrt{1+u_{x}^{2}(y)}}
   +
   \sigma(\alpha)\frac{h_{x}(y)}{\left(\sqrt{1+u_{x}^{2}(y)}\right)^{3}}\right\}dy
   \right)
   \\
   &=
   \sigma'(\alpha)\beta\frac{u_{x}(x)}{\sqrt{1+u_{x}^{2}(x)}}
   +
   \sigma(\alpha)\frac{h_{x}(x)}{\left(\sqrt{1+u_{x}^{2}(x)}\right)^{3}},
  \end{split}
\end{equation}
 and
 \begin{align*}
  &\quad
  \frac{d^{2}}{dx^{2}}
  \left(
  \int_{0}^{x}
  \left\{
  \sigma'(\alpha)\beta\frac{u_{x}(y)}{\sqrt{1+u_{x}^{2}(y)}}
  +
  \sigma(\alpha)\frac{h_{x}(x)}{\left(\sqrt{1+u_{x}^{2}(x)}\right)^{3}}\right\}dy
  \right)
  \\
  &=\sigma'(\alpha)\beta\frac{u_{xx}(x)}{\left(\sqrt{1+u_{x}^{2}(x)}\right)^{3}}
  +
  \sigma(\alpha)\left\{\frac{h_{xx}(x)}{\left(\sqrt{1+u_{x}^{2}(x)}\right)^{3}}
  -
  \frac{3h_{x}(x)u_{x}(x)u_{xx}(x)}{\left(\sqrt{1+u_{x}^{2}(x)}\right)^{5}}\right\}.
 \end{align*}
 We consider the first derivative of the first component of
 $D\dot{E}((u,\alpha),(h,\beta))$. Since $u\in
 H_{\text{per.ave}}^{2}(0,1)$ and $\sqrt{1+u_x^2}\geq1$,
 \begin{equation}
  \label{eq:LS.Geteau_bdd1}
   \left\|
    \sigma'(\alpha)\beta\frac{u_{x}(x)}{\sqrt{1+u_{x}^{2}(x)}}
   \right\|_{L^{2}(0,1)}
   \leq
   |\sigma'(\alpha)|\|u\|_{H^{2}_{\mathrm{per.ave}}(0,1)}|\beta|
 \end{equation}
 and
 \begin{equation}
  \label{eq:LS.Geteau_bdd2}
  \left\|
   \sigma(\alpha)\frac{h_{x}(x)}{\left(\sqrt{1+u_{x}^{2}(x)}\right)^{3}}
  \right\|_{L^{2}(0,1)}
  \leq
  |\sigma(\alpha)|\|h\|_{H^{2}_{\mathrm{per.ave}}(0,1)}.
 \end{equation}
 Next, we consider the second derivative of the first component of
 $D\dot{E}((u,\alpha),(h,\beta))$. Similarly, using $u\in
 H_{\text{per.ave}}^{2}(0,1)$, $\sqrt{1+u_x^2}\geq1$ and
 $\sqrt{1+u_x^2}\geq |u_x|$, we have
 \begin{equation}
  \label{eq:LS.Geteau_bdd3}
  \left\|
   \sigma'(\alpha)\beta\frac{u_{xx}}{(\sqrt{1+u_{x}^{2}})^3}
  \right\|_{L^{2}(0,1)}
  \leq
  |\sigma'(\alpha)|\|u\|_{H^{2}_{\mathrm{per.ave}}(0,1)}|\beta|
 \end{equation}
 and
 \begin{align*}
  &\quad
  \left\|
  \sigma(\alpha)
  \left\{
  \frac{h_{xx}(x)}{\left(\sqrt{1+u_{x}^{2}(x)}\right)^{3}}
  -
  \frac{3h_{x}(x)u_{x}(x)u_{xx}(x)}{\left(\sqrt{1+u_{x}^{2}(x)}\right)^{5}}
  \right\}
  \right\|_{L^{2}(0,1)}
  \\
  &\leq
  \left\|
  \sigma(\alpha)
  \frac{h_{xx}(x)}{\left(\sqrt{1+u_{x}^{2}(x)}\right)^{3}}
  \right\|_{L^{2}(0,1)}
  +
  3\left\|
  \sigma(\alpha)
  \frac{h_{x}(x)u_{x}(x)u_{xx}(x)}{\left(\sqrt{1+u_{x}^{2}(x)}
  \right)^{5}}
  \right\|_{L^{2}(0,1)}
  \\
  &\leq
  |\sigma(\alpha)|
  \left(
  \|h\|_{H^2_{\mathrm{per.ave}}(0,1)}
  +
  3
  \sup_{x\in(0,1)}|h_{x}(x)|
  \|u\|_{H^2_{\mathrm{per.ave}}(0,1)}
  \right).
 \end{align*}
 Using \eqref{eq:LS.Sobolev_Embedding_Y} with $h\in
 H^2_{\mathrm{per.ave}}(0,1)$, we have
 \begin{equation}
  \sup_{x\in(0,1)}|h_{x}(x)|
  \leq
  \sqrt{2}
  \|h\|_{H^{2}_{\mathrm{per.ave}}(0,1)},
 \end{equation}
 hence we obtain
 \begin{equation}
  \label{eq:LS.Geteau_bdd4}
   \left\|
    \sigma(\alpha)
    \left\{
     \frac{h_{xx}(x)}{\left(\sqrt{1+u_{x}^{2}(x)}\right)^{3}}
     -\frac{h_{x}(x)u_{x}(x)u_{xx}(x)}{\left(\sqrt{1+u_{x}^{2}(x)}\right)^{5}}\right\}\right\|_{L^{2}(0,1)}
   \leq
   |\sigma(\alpha)|
   (
   1+
   3\sqrt{2}\|u\|_{H^2_{\mathrm{per.ave}}(0,1)}
   )
   \|h\|_{H^2_{\mathrm{per.ave}}(0,1)}.
 \end{equation}
 Therefore, we have from \eqref{eq:LS.Geteau_bdd1}, \eqref{eq:LS.Geteau_bdd2}, \eqref{eq:LS.Geteau_bdd3}, \eqref{eq:LS.Geteau_bdd4} that
 \begin{equation}
   \label{eq:LS.Geteau_bdd5}
    J_1
   \leq
   2|\sigma'(\alpha)|\|u\|_{H^{2}_{\mathrm{per.ave}}(0,1)}|\beta|
   +
   |\sigma(\alpha)|
   (
   2
   +
   3\sqrt{2}\|u\|_{H^2_{\mathrm{per.ave}}(0,1)}
   )
   \|h\|_{H^2_{\mathrm{per.ave}}(0,1)}.
 \end{equation}

 Next, we consider $J_2$. Using $u\in H_{\text{per.ave}}^{2}(0,1)$, the
 H\"older inequality, $\sqrt{1+u_x^2(x)}\geq 1$, and $1+u_x^2(x)\geq
 u^2_x(x)$, we have
  \begin{align*}
   \left|\int_{0}^{1}\sigma''(\alpha)\beta\sqrt{1+u_{x}^{2}}\,dx\right|
   &\leq|\sigma''(\alpha)||\beta|\left|\int_{0}^{1}
   \sqrt{1+u_{x}^{2}}\,dx\right|\\
   &\leq
   |\sigma''(\alpha)||\beta|\left(1+\|u\|_{H^{2}_{\mathrm{per.ave}}(0,1)}^{2}\right)^{\frac12},
  \end{align*}
 and
 \begin{align*}
  \left|
  \int_{0}^{1}
  \sigma'(\alpha)
  \frac{u_{x}(x)h_{x}(x)}{\sqrt{1+u_{x}^{2}(x)}}
  dx
  \right|
  &\leq
  |\sigma'(\alpha)|
  \left(
  \int_{0}^{1}\frac{u_{x}^{2}(x)}{1+u_{x}^{2}(x)}\,dx
  \right)^{\frac{1}{2}}
  \left(
  \int_{0}^{1}h_{x}^{2}(x)dx
  \right)^{\frac{1}{2}}
  \leq
  |\sigma'(\alpha)\|h\|_{H_{\mathrm{per.ave}}^{2}(0,1)}.
 \end{align*}
 Hence
 \begin{equation}
  \label{eq:LS.Geteau_bdd6}
   J_2
   \leq
   |\sigma''(\alpha)||\beta|\left(1+\|u\|_{H^{2}_{\mathrm{per.ave}}(0,1)}^{2}\right)^{\frac12}
   +
   |\sigma'(\alpha)\|h\|_{H_{\mathrm{per.ave}}^{2}(0,1)}.
 \end{equation}

 Note that
 $|\sigma(\alpha)|,|\sigma'(\alpha)|,|\sigma''(\alpha)|,\|u\|_{H_{\text{per.ave}}^{2}(0,1)}<\infty$
 since $(u,\alpha)\in Y$. From \eqref{eq:LS.Geteau_bdd5},
 \eqref{eq:LS.Geteau_bdd6}, there is a constant $C>0$ such that
 \begin{align*}
  \|D\dot{E}((u,\alpha),(h,\beta))\|^2_Y
  =
  J_1^2+J_2^2
  &\leq C\|(h,\beta)\|_{Y}^2.
 \end{align*}
 Therefore, $Y\ni (h,\beta)\mapsto D\dot{E}((u,\alpha), (h,\beta))\in Y$ is a
 bounded operator on $Y$ for any $(u,\alpha)\in Y$.
\end{proof}

Next, we show the continuity of the derivative
$D\dot{E}((u,\alpha),\cdot)$ in terms of $(u,\alpha)$.

\begin{lemma}
 \label{app:lem2}
 $O\ni
 (u,\alpha)\rightarrow D\dot{E}((u,\alpha),\cdot)\in \mathscr{L}(Y)$ is
 continuous on $Y$.
\end{lemma}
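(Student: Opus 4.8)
\emph{Proof strategy.} The plan is to prove continuity of the map $(u,\alpha)\mapsto D\dot E((u,\alpha),\cdot)$ at an arbitrary point $(\underline u,\underline\alpha)\in O$ by establishing the quantitative (in fact locally Lipschitz) estimate
\[
 \|D\dot E((u_1,\alpha_1),(h,\beta))-D\dot E((u_2,\alpha_2),(h,\beta))\|_Y
 \le
 C\big(|\alpha_1-\alpha_2|+\|u_1-u_2\|_{H^2_{\mathrm{per.ave}}(0,1)}\big)\|(h,\beta)\|_Y
\]
for all $(u_1,\alpha_1),(u_2,\alpha_2)$ in a fixed small ball $B\subset O$ centred at $(\underline u,\underline\alpha)$ and all $(h,\beta)\in Y$, with $C$ depending only on $\underline u,\underline\alpha$. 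Taking the supremum over $\|(h,\beta)\|_Y\le1$ then yields continuity of $B\ni(u,\alpha)\mapsto D\dot E((u,\alpha),\cdot)\in\mathscr L(Y)$ at $(\underline u,\underline\alpha)$, and since $(\underline u,\underline\alpha)$ is arbitrary the lemma follows; together with Lemma~\ref{app:lem1} this supplies the last hypothesis of Lemma~\ref{lem:LS.Gateau_Frechet} needed later. Restricting to such a ball is what makes the estimate uniform: there $\|u_i\|_{H^2_{\mathrm{per.ave}}(0,1)}$ stays bounded, and since $\sigma$ is analytic, $\sigma,\sigma',\sigma'',\sigma'''$ are bounded and Lipschitz on the compact interval $[\underline\alpha-1,\underline\alpha+1]$; I will use the latter as $|\sigma^{(j)}(\alpha_1)-\sigma^{(j)}(\alpha_2)|\le C|\alpha_1-\alpha_2|$ for $j=0,1,2$.

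I would run the computation along the same lines as the proof of Lemma~\ref{app:lem1}, using the explicit formula \eqref{eq:3.SecondDerivativeGBE} of Proposition~\ref{app:prop6}. The scalar second component of $D\dot E$ is an integral, so its difference is controlled by Hölder's inequality, the first two inequalities of Lemma~\ref{lem:LS.Vol_Aux_Inequality} (applied to $\sqrt{1+\xi^2}$ and $\xi/\sqrt{1+\xi^2}$ with $\xi=u_{1,x}$, $\eta=u_{2,x}$), and the Lipschitz bounds on $\sigma',\sigma''$, giving the required bound with an extra factor $|\beta|+\|h_x\|_{L^2(0,1)}$. For the first component, whose $H^2_{\mathrm{per.ave}}$-norm is $\big(\|(\cdot)_x\|_{L^2(0,1)}^2+\|(\cdot)_{xx}\|_{L^2(0,1)}^2\big)^{1/2}$, I differentiate once and twice exactly as in Lemma~\ref{app:lem1}, obtaining the first derivative $\sigma'(\alpha)\beta\tfrac{u_x}{\sqrt{1+u_x^2}}+\sigma(\alpha)\tfrac{h_x}{(\sqrt{1+u_x^2})^3}$ and the second derivative $\sigma'(\alpha)\beta\tfrac{u_{xx}}{(\sqrt{1+u_x^2})^3}+\sigma(\alpha)\big(\tfrac{h_{xx}}{(\sqrt{1+u_x^2})^3}-\tfrac{3h_xu_xu_{xx}}{(\sqrt{1+u_x^2})^5}\big)$, and estimate the $(u_1,\alpha_1)$–$(u_2,\alpha_2)$ difference of each expression term by term by adding and subtracting intermediate quantities. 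Coefficient changes of $\sigma$ are handled by the Lipschitz bounds above together with $\sup_{(0,1)}|h_x|\le\sqrt2\|h\|_{H^2_{\mathrm{per.ave}}(0,1)}$ from \eqref{eq:LS.Sobolev_Embedding_Y}; changes in the rational functions of $u_x$ are handled by the third and fourth inequalities of Lemma~\ref{lem:LS.Vol_Aux_Inequality}, again combined with \eqref{eq:LS.Sobolev_Embedding_Y} to turn $\sup_{(0,1)}|u_{1,x}-u_{2,x}|$ into $\sqrt2\|u_1-u_2\|_{H^2_{\mathrm{per.ave}}(0,1)}$.

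The only place requiring care — the main obstacle — is the top-order term $\sigma(\alpha)\tfrac{3h_xu_xu_{xx}}{(\sqrt{1+u_x^2})^5}$ of the second derivative, because its difference couples $h_x$, $u_x$ and $u_{xx}$ at once. I would telescope it through three steps, always keeping $h_x$ (an $L^\infty$ factor, $\le\sqrt2\|h\|_{H^2_{\mathrm{per.ave}}(0,1)}$) and the combined bounded factor $\tfrac{u_x}{(\sqrt{1+u_x^2})^5}$ (which is $\le1$ in modulus) in mind: first vary $\sigma(\alpha)$, controlled by $|\sigma(\alpha_1)-\sigma(\alpha_2)|\le C|\alpha_1-\alpha_2|$ times $\sup|h_x|$ and $\|u_{1,xx}\|_{L^2(0,1)}$; then vary the $L^2$-factor $u_{xx}$, controlled by $\|u_{1,xx}-u_{2,xx}\|_{L^2(0,1)}\le\|u_1-u_2\|_{H^2_{\mathrm{per.ave}}(0,1)}$ times $\sup|h_x|$; finally vary the bounded factor $\tfrac{u_x}{(\sqrt{1+u_x^2})^5}$, whose pointwise change is at most $15|u_{1,x}-u_{2,x}|\le15\sqrt2\|u_1-u_2\|_{H^2_{\mathrm{per.ave}}(0,1)}$ by the last inequality of Lemma~\ref{lem:LS.Vol_Aux_Inequality}, times $\sup|h_x|$ and $\|u_{2,xx}\|_{L^2(0,1)}$. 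Using boundedness of $\|u_i\|_{H^2_{\mathrm{per.ave}}(0,1)}$ on $B$ and \eqref{eq:LS.Sobolev_Embedding_Y}, each piece is $\le C(|\alpha_1-\alpha_2|+\|u_1-u_2\|_{H^2_{\mathrm{per.ave}}(0,1)})\|h\|_{H^2_{\mathrm{per.ave}}(0,1)}$. The remaining terms $\sigma(\alpha)\tfrac{h_{xx}}{(\sqrt{1+u_x^2})^3}$ and $\sigma'(\alpha)\beta\tfrac{u_{xx}}{(\sqrt{1+u_x^2})^3}$, and likewise the first-derivative terms, are treated the same way but more easily, using the third inequality of Lemma~\ref{lem:LS.Vol_Aux_Inequality} and carrying $h_{xx}$ or $u_{xx}$ as the $L^2$-factor. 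Summing all contributions gives the displayed estimate, hence the continuity of $(u,\alpha)\mapsto D\dot E((u,\alpha),\cdot)$ from $O$ into $\mathscr L(Y)$.
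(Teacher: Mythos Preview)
Your proposal is correct and follows essentially the same approach as the paper: both proofs differentiate the first component of \eqref{eq:3.SecondDerivativeGBE} once and twice, telescope each resulting term by adding and subtracting intermediate quantities, and control the pieces using Lemma~\ref{lem:LS.Vol_Aux_Inequality}, the Sobolev embedding \eqref{eq:LS.Sobolev_Embedding_Y}, and local Lipschitz bounds on $\sigma,\sigma',\sigma''$ on $[\underline\alpha-1,\underline\alpha+1]$. The only cosmetic difference is that you package the estimates as a local Lipschitz bound on a ball, whereas the paper presents them as six separate convergence statements, but the underlying computation and the handling of the critical triple-product term $\sigma(\alpha)\tfrac{3h_xu_xu_{xx}}{(\sqrt{1+u_x^2})^5}$ are identical.
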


\begin{proof}
 In order to show that $(u,\alpha)\mapsto
 D\dot{E}((u,\alpha),\cdot)$ is
 continuous on $Y$, it is enough to prove
 \begin{align*}
  (u,\alpha)\to(\tilde{u},\tilde{\alpha})
  \ \text{in}\ Y
  \Rightarrow
  D\dot{E}((u,\alpha),\cdot)
  \rightarrow
  D\dot{E}((\tilde{u},\tilde{\alpha}),\cdot)
  \ \text{in}\;\mathscr{L}(Y).
 \end{align*}

 Since
 \begin{align*}
  \sup_{\substack{(h,\beta)\in Y \\ (h,\beta)\neq 0}}
  \frac{
  \|
  D\dot{E}((u,\alpha),(h,\beta))
  -
  D\dot{E}((\tilde{u},\tilde{\alpha}),(h,\beta))
  \|_{Y}
  }
  {
  \|(h,\beta)\|_{Y}
  }
  =
  \|
  D\dot{E}((u,\alpha),\cdot)
  -
  D\dot{E}((\tilde{u},\tilde{\alpha}),\cdot)
  \|_{\mathscr{L}(Y)},
 \end{align*}
 we will compute $\| D\dot{E}((u,\alpha),(h,\beta)) -
 D\dot{E}((\tilde{u},\tilde{\alpha}),(h,\beta)) \|_{Y}$ for all
 $(h,\beta)\in Y$ and derive the estimate of $ \|
 D\dot{E}((u,\alpha),\cdot) - D\dot{E}((\tilde{u},\tilde{\alpha}),\cdot)
 \|_{\mathscr{L}(Y)}$.

 From Proposition \ref{app:prop6}, 
 we write
 \begin{equation}
  \|
  D\dot{E}((u,\alpha),(h,\beta))
 -
  D\dot{E}((\tilde{u},\tilde{\alpha}),(h,\beta))
  \|_{Y}^2
  =
  N^2_1+N^2_2,
 \end{equation}
 where
 \begin{equation}
  \label{app:24}
  \begin{split}
   N_1
   &:=
   \left
   \|\int_{0}^{x}\left\{\sigma'(\alpha)\beta\frac{u_{x}(y)}{\sqrt{1+u_{x}^{2}(y)}}-\sigma'(\tilde{\alpha})\beta\frac{\tilde{u}_{x}(y)}{\sqrt{1+\tilde{u}_{x}^{2}(y)}}\right\}dy\right.\\
   &\qquad+\int_{0}^{x}\left\{\sigma(\alpha)\frac{h_{x}(y)}{\left(\sqrt{1+u_{x}^{2}(y)}\right)^{3}}-\sigma(\tilde{\alpha})\frac{h_{x}(y)}{\left(\sqrt{1+\tilde{u}_{x}^{2}(y)}\right)^{3}}\right\}dy\\
   &\qquad-\int_{0}^{1}\int_{0}^{x}\left\{\sigma'(\alpha)\beta\frac{u_{x}(y)}{\sqrt{1+u_{x}^{2}(y)}}-\sigma'(\tilde{\alpha})\beta\frac{\tilde{u}_{x}(y)}{\sqrt{1+\tilde{u}_{x}^{2}(y)}}\right\}dydx\\
   &\qquad-\left.\int_{0}^{1}\int_{0}^{x}\left\{\sigma(\alpha)\frac{h_{x}(y)}{\left(\sqrt{1+u_{x}^{2}(y)}\right)^{3}}-\sigma(\tilde{\alpha})\frac{h_{x}(y)}{\left(\sqrt{1+\tilde{u}_{x}^{2}(y)}\right)^{3}}\right\}dydx\right\|_{H_{\text{per.ave}}^{2}(0,1)}
  \end{split}
 \end{equation}
 and
 \begin{equation}
  \begin{split}
   \label{app:25}
   N_2
   &:=
   \left|\int_{0}^{1}\left\{\sigma''(\alpha)\beta\sqrt{1+u_{x}^{2}(x)}-\sigma''(\tilde{\alpha})\beta\sqrt{1+\tilde{u}_{x}^{2}(x)}\right\}dx\right.\\
   &\qquad
   +\left.\int_{0}^{1}\left\{\sigma'(\alpha)\frac{u_{x}(x)h_{x}(x)}{\sqrt{1+u_{x}^{2}(x)}}-\sigma'(\tilde{\alpha})\frac{\tilde{u}_{x}(x)h_{x}(x)}{\sqrt{1+\tilde{u}_{x}^{2}(x)}}\right\}dx\right|.
  \end{split}
 \end{equation}
 Here, we compute \eqref{app:24} and \eqref{app:25} and derive the upper
  bounds of the operator norm $\| D\dot{E}((u,\alpha),\cdot) -
  D\dot{E}((\tilde{u},\tilde{\alpha}),\cdot) \|_{\mathscr{L}(Y)}$.

 First, we consider the first term of \eqref{app:24}. Taking the
 derivative twice, we have
 \begin{align*}
  &\quad\frac{d}{dx}\left(\int_{0}^{x}\left\{\sigma'(\alpha)\beta\frac{u_{x}(y)}{\sqrt{1+u_{x}^{2}(y)}}-\sigma'(\tilde{\alpha})\beta\frac{\tilde{u}_{x}(y)}{\sqrt{1+\tilde{u}_{x}^{2}(y)}}\right\}dy\right)\\
  &=
  \sigma'(\alpha)\beta\frac{u_{x}(x)}{\sqrt{1+u_{x}^{2}(x)}}-\sigma'(\tilde{\alpha})\beta\frac{\tilde{u}_{x}(x)}{\sqrt{1+\tilde{u}_{x}^{2}(x)}}
 \end{align*}
 and
 \begin{align*}
  &\quad\frac{d^{2}}{dx^{2}}\left(\int_{0}^{x}\left\{\sigma'(\alpha)\beta\frac{u_{x}(y)}{\sqrt{1+u_{x}^{2}(y)}}-\sigma'(\tilde{\alpha})\beta\frac{\tilde{u}_{x}(y)}{\sqrt{1+\tilde{u}_{x}^{2}(y)}}\right\}dy\right)\\
   &=\sigma'(\alpha)\beta\frac{u_{xx}(x)}{\left(\sqrt{1+u_{x}^{2}(x)}\right)^{3}}-\sigma'(\tilde{\alpha})\beta\frac{\tilde{u}_{xx}(x)}{\left(\sqrt{1+\tilde{u}_{x}^{2}(x)}\right)^{3}}.
 \end{align*}
 By the definition of the norm of $H_{\text{per.ave}}^{2}(0,1)$, we have
 \begin{equation}
  \label{eq:LS.Freschet_OperatorNorm1-1}
   \begin{split}
    &\quad\left\|\int_{0}^{x}\left\{\sigma'(\alpha)\beta\frac{u_{x}(y)}{\sqrt{1+u_{x}^{2}(y)}}-\sigma'(\tilde{\alpha})\beta\frac{\tilde{u}_{x}(y)}{\sqrt{1+\tilde{u}_{x}^{2}(y)}}\right\}dy\right\|_{H_{\text{per.ave}}^{2}(0,1)}
    \\
    &\leq|\beta|
    \left\|
    \sigma'(\alpha)\frac{u_{x}}{\sqrt{1+u_{x}^{2}}}
    -
    \sigma'(\tilde{\alpha})\frac{\tilde{u}_{x}}{\sqrt{1+\tilde{u}_{x}^{2}}}
    \right\|_{L^{2}(0,1)}
    \\
    &\quad
    +
    |\beta|
    \left\|
    \sigma'(\alpha)\frac{u_{xx}}{\left(\sqrt{1+u_{x}^{2}}\right)^{3}}
    -
    \sigma'(\tilde{\alpha})\frac{\tilde{u}_{xx}}{\left(\sqrt{1+\tilde{u}_{x}^{2}}\right)^{3}}\right\|_{L^{2}(0,1)}.
   \end{split}
 \end{equation}
 Next, we compute the second term of \eqref{app:24}. Taking the
 derivative twice, we obtain
 \begin{align*}
  &\quad\frac{d}{dx}\left(\int_{0}^{x}\left\{\sigma(\alpha)\frac{h_{x}(y)}{\left(\sqrt{1+u_{x}^{2}(y)}\right)^{3}}-\sigma(\tilde{\alpha})\frac{h_{x}(y)}{\left(\sqrt{1+\tilde{u}_{x}^{2}(y)}\right)^{3}}\right\}dy\right)\\
  &=\sigma(\alpha)\frac{h_{x}(x)}{\left(\sqrt{1+u_{x}^{2}(x)}\right)^{3}}-\sigma(\tilde{\alpha})\frac{h_{x}(x)}{\left(\sqrt{1+\tilde{u}_{x}^{2}(x)}\right)^{3}}
 \end{align*}
 and
\begin{align*}
 &\quad\frac{d^{2}}{dx^{2}}\left(\int_{0}^{x}\left\{\sigma(\alpha)\frac{h_{x}(y)}{\left(\sqrt{1+u_{x}^{2}(y)}\right)^{3}}-\sigma(\tilde{\alpha})\frac{h_{x}(y)}{\left(\sqrt{1+\tilde{u}_{x}^{2}(y)}\right)^{3}}\right\}dy\right)\\
 &=\left\{\sigma(\alpha)\frac{h_{xx}(x)}{\left(\sqrt{1+u_{x}^{2}(x)}\right)^{3}}-\sigma(\tilde{\alpha})\frac{h_{xx}(x)}{\left(\sqrt{1+\tilde{u}_{x}^{2}(x)}\right)^{3}}\right\}\\
 &\qquad-3\left\{\sigma(\alpha)\frac{h_{x}(x)u_{x}(x)u_{xx}(x)}{\left(\sqrt{1+u_{x}^{2}(x)}\right)^{5}}-\sigma(\tilde{\alpha})\frac{h_{x}(x)\tilde{u}_{x}(x)\tilde{u}_{xx}(x)}{\left(\sqrt{1+\tilde{u}_{x}^{2}(x)}\right)^{5}}\right\}.
\end{align*}

By the definition of the norm of $H^2_{\mathrm{per.ave}}(0,1)$, we have
 \begin{align*}
  &\quad\left\|\int_{0}^{x}\left\{\sigma(\alpha)\frac{h_{x}(y)}{\left(\sqrt{1+u_{x}^{2}(y)}\right)^{3}}-\sigma(\tilde{\alpha})\frac{h_{x}(y)}{\left(\sqrt{1+\tilde{u}_{x}^{2}(y)}\right)^{3}}\right\}dy\right\|_{H^{2}_{\mathrm{per.ave}}(0,1)}
  \\
  &\leq
  \left\|
  \sigma(\alpha)\frac{h_{x}}{\left(\sqrt{1+u_{x}^{2}}\right)^{3}}-\sigma(\tilde{\alpha})\frac{h_{x}}{\left(\sqrt{1+\tilde{u}_{x}^{2}}\right)^{3}}
  \right\|_{L^{2}(0,1)}\\
  &\quad
  +\left\|\sigma(\alpha)\frac{h_{xx}}{\left(\sqrt{1+u_{x}^{2}}\right)^{3}}-\sigma(\tilde{\alpha})\frac{h_{xx}}{\left(\sqrt{1+\tilde{u}_{x}^{2}}\right)^{3}}\right\|_{L^{2}(0,1)}\\
  &\quad
  +
  3
  \left\|\sigma(\alpha)\frac{h_{x}u_{x}u_{xx}}{\left(\sqrt{1+u_{x}^{2}}\right)^{5}}
  -
  \sigma(\tilde{\alpha})\frac{h_{x}\tilde{u}_{x}\tilde{u}_{xx}}{\left(\sqrt{1+\tilde{u}_{x}^{2}}\right)^{5}}\right\|_{L^{2}(0,1)}
  \\
  &\leq
  2\sup_{x\in(0,1)}
   \left|
  \sigma(\alpha)
  \frac{1}{\left(\sqrt{1+u_{x}^{2}(x)}\right)^{3}}
  -
  \sigma(\tilde{\alpha})\frac{1}{\left(\sqrt{1+\tilde{u}_{x}^{2}(x)}
  \right)^{3}}\right|
  \|h\|_{H^2_{\mathrm{per.ave}}(0,1)}
  \\
  &\quad
  +
  3\sup_{x\in(0,1)}|h_x(x)|
  \left\|\sigma(\alpha)\frac{u_{x}u_{xx}}{\left(\sqrt{1+u_{x}^{2}}\right)^{5}}-\sigma(\tilde{\alpha})\frac{\tilde{u}_{x}\tilde{u}_{xx}}{\left(\sqrt{1+\tilde{u}_{x}^{2}}\right)^{5}}\right\|_{L^{2}(0,1)}.
 \end{align*}
 Note from the Sobolev inequality \eqref{eq:LS.Sobolev_Embedding_Y},
 $\sup_{x\in(0,1)}|h_x(x)|\leq \sqrt{2}\|h\|_{H^2_{\mathrm{per.ave}}(0,1)}$,
 hence we find
 \begin{equation}
  \label{eq:LS.Freschet_OperatorNorm1-2}
   \begin{split}
    &\quad\left\|\int_{0}^{x}\left\{\sigma(\alpha)\frac{h_{x}(y)}{\left(\sqrt{1+u_{x}^{2}(y)}\right)^{3}}-\sigma(\tilde{\alpha})\frac{h_{x}(y)}{\left(\sqrt{1+\tilde{u}_{x}^{2}(y)}\right)^{3}}\right\}dy\right\|_{H^{2}_{\mathrm{per.ave}}(0,1)}
    \\
    &\leq
    \Biggl(
    2\sup_{x\in(0,1)}
    \left|
    \sigma(\alpha)\frac{1}{\left(\sqrt{1+u_{x}^{2}(x)}\right)^{3}}-\sigma(\tilde{\alpha})\frac{1}{\left(\sqrt{1+\tilde{u}_{x}^{2}(x)}\right)^{3}}
    \right|
    \\
    &\quad
    +
    3\sqrt{2}
    \left\|
    \sigma(\alpha)\frac{u_{x}u_{xx}}{\left(\sqrt{1+u_{x}^{2}}\right)^{5}}-\sigma(\tilde{\alpha})\frac{\tilde{u}_{x}\tilde{u}_{xx}}{\left(\sqrt{1+\tilde{u}_{x}^{2}}\right)^{5}}
    \right\|_{L^{2}(0,1)}
    \Biggr)
    \|h\|_{H^2_{\mathrm{per.ave}}(0,1)}.
   \end{split}
 \end{equation}
 Since the third and fourth terms in the norm of \eqref{app:24} are
 constants, we obtain from \eqref{eq:LS.Freschet_OperatorNorm1-1} and
 \eqref{eq:LS.Freschet_OperatorNorm1-2} that
 \begin{equation}
  \label{eq:LS.Freschet_OperatorNorm1}
   \begin{split}
   \frac{N_1}{\|(h,\beta)\|_Y}
   &\leq
   \left\|
   \sigma'(\alpha)\frac{u_{x}}{\sqrt{1+u_{x}^{2}}}
   -
   \sigma'(\tilde{\alpha})\frac{\tilde{u}_{x}}{\sqrt{1+\tilde{u}_{x}^{2}}}
   \right\|_{L^{2}(0,1)}
   \\
   &\quad
   +
   \left\|
   \sigma'(\alpha)\frac{u_{xx}}{\left(\sqrt{1+u_{x}^{2}}\right)^{3}}
   -
   \sigma'(\tilde{\alpha})\frac{\tilde{u}_{xx}}{\left(\sqrt{1+\tilde{u}_{x}^{2}}\right)^{3}}\right\|_{L^{2}(0,1)}
   \\
   &\quad
   +
   2\sup_{x\in(0,1)}
   \left|\sigma(\alpha)\frac{1}{\left(\sqrt{1+u_{x}^{2}(x)}\right)^{3}}-\sigma(\tilde{\alpha})\frac{1}{\left(\sqrt{1+\tilde{u}_{x}^{2}(x)}\right)^{3}}\right|
   \\
   &\quad
   +3\sqrt{2}
   \left\|\sigma(\alpha)\frac{u_{x}u_{xx}}{\left(\sqrt{1+u_{x}^{2}}\right)^{5}}-\sigma(\tilde{\alpha})\frac{\tilde{u}_{x}\tilde{u}_{xx}}{\left(\sqrt{1+\tilde{u}_{x}^{2}}\right)^{5}}\right\|_{L^{2}(0,1)}.
   \end{split}
 \end{equation}

 Next, we consider \eqref{app:25}. The first term can be transformed
  into
 \begin{align*}
  &\quad\left|\int_{0}^{1}\left\{\sigma''(\alpha)\beta\sqrt{1+u_{x}^{2}(x)}-\sigma''(\tilde{\alpha})\beta\sqrt{1+\tilde{u}_{x}^{2}(x)}\right\}dx\right|\\
  &\leq
  |\beta|
  \left|\int_{0}^{1}\left\{\sigma''(\alpha)\sqrt{1+u_{x}^{2}(x)}-\sigma''(\tilde{\alpha})\sqrt{1+\tilde{u}_{x}^{2}(x)}\right\}dx\right|.
 \end{align*}
 The second term can be computed together with
\eqref{eq:LS.Sobolev_Embedding_Y} as
 \begin{align*}
  &\quad\left|\int_{0}^{1}\left\{\sigma'(\alpha)\frac{u_{x}(x)h_{x}(x)}{\sqrt{1+u_{x}^{2}(x)}}-\sigma'(\tilde{\alpha})\frac{\tilde{u}_{x}(x)h_{x}(x)}{\sqrt{1+\tilde{u}_{x}^{2}(x)}}\right\}dx\right|\\
  &\leq\sup_{x\in(0,1)}|h_{x}(x)|\left|\int_{0}^{1}\left\{\sigma'(\alpha)\frac{u_{x}(x)}{\sqrt{1+u_{x}^{2}(x)}}-\sigma'(\tilde{\alpha})\frac{\tilde{u}_{x}(x)}{\sqrt{1+\tilde{u}_{x}^{2}(x)}}\right\}dx\right|\\
   &\leq\|h\|_{H^{2}_{\mathrm{per.ave}}(0,1)}
  \left|\int_{0}^{1}\left\{\sigma'(\alpha)\frac{u_{x}(x)}{\sqrt{1+u_{x}^{2}(x)}}-\sigma'(\tilde{\alpha})\frac{\tilde{u}_{x}(x)}{\sqrt{1+\tilde{u}_{x}^{2}(x)}}\right\}dx\right|.
 \end{align*}
 Therefore, $N_2$ can be estimated as
 \begin{equation}
  \label{eq:LS.Freschet_OperatorNorm2}
  \begin{split}
   \frac{N_2}{\|(h,\beta)\|_Y}
   &\leq
   \left|\int_{0}^{1}\left\{\sigma''(\alpha)\sqrt{1+u_{x}^{2}(x)}-\sigma''(\tilde{\alpha})\sqrt{1+\tilde{u}_{x}^{2}(x)}\right\}dx\right|
   \\
   &\quad
   +
   \left|\int_{0}^{1}\left\{\sigma'(\alpha)\frac{u_{x}(x)}{\sqrt{1+u_{x}^{2}(x)}}-\sigma'(\tilde{\alpha})\frac{\tilde{u}_{x}(x)}{\sqrt{1+\tilde{u}_{x}^{2}(x)}}\right\}dx\right|.
  \end{split}
 \end{equation}
 Combining \eqref{eq:LS.Freschet_OperatorNorm1} and
 \eqref{eq:LS.Freschet_OperatorNorm2}, we arrive at
 \begin{equation}
  \label{eq:LS.Freschet_OperatorNorm}
  \begin{split}
   \|
   D\dot{E}((u,\alpha),\cdot)
   -
   D\dot{E}((\tilde{u},\tilde{\alpha}),\cdot)
   \|_{\mathscr{L}(Y)}
   &\leq
   \left\|
   \sigma'(\alpha)\frac{u_{x}}{\sqrt{1+u_{x}^{2}}}
   -
   \sigma'(\tilde{\alpha})\frac{\tilde{u}_{x}}{\sqrt{1+\tilde{u}_{x}^{2}}}
   \right\|_{L^{2}(0,1)}
   \\
   &\quad
   +
   \left\|
   \sigma'(\alpha)\frac{u_{xx}}{\left(\sqrt{1+u_{x}^{2}}\right)^{3}}
   -
   \sigma'(\tilde{\alpha})\frac{\tilde{u}_{xx}}{\left(\sqrt{1+\tilde{u}_{x}^{2}}\right)^{3}}\right\|_{L^{2}(0,1)}
   \\
   &\quad
   +
   2\sup_{x\in(0,1)}
   \left|\sigma(\alpha)\frac{1}{\left(\sqrt{1+u_{x}^{2}(x)}\right)^{3}}-\sigma(\tilde{\alpha})\frac{1}{\left(\sqrt{1+\tilde{u}_{x}^{2}(x)}\right)^{3}}\right|
   \\
   &\quad
   +3\sqrt{2}
   \left\|\sigma(\alpha)\frac{u_{x}u_{xx}}{\left(\sqrt{1+u_{x}^{2}}\right)^{5}}-\sigma(\tilde{\alpha})\frac{\tilde{u}_{x}\tilde{u}_{xx}}{\left(\sqrt{1+\tilde{u}_{x}^{2}}\right)^{5}}\right\|_{L^{2}(0,1)}
   \\
   &\quad
   +
   \left|\int_{0}^{1}\left\{\sigma''(\alpha)\sqrt{1+u_{x}^{2}(x)}-\sigma''(\tilde{\alpha})\sqrt{1+\tilde{u}_{x}^{2}(x)}\right\}dx\right|
   \\
   &\quad
   +
   \left|\int_{0}^{1}\left\{\sigma'(\alpha)\frac{u_{x}(x)}{\sqrt{1+u_{x}^{2}(x)}}-\sigma'(\tilde{\alpha})\frac{\tilde{u}_{x}(x)}{\sqrt{1+\tilde{u}_{x}^{2}(x)}}\right\}dx\right|.
  \end{split}
 \end{equation}
 Therefore, we will show that the right-hand side of
 \eqref{eq:LS.Freschet_OperatorNorm} converges to $0$ as
 $(u,\alpha)\rightarrow(\tilde{u},\tilde{\alpha})$.

 To show the convergence, we first consider the first term of
 \eqref{eq:LS.Freschet_OperatorNorm}. To prove this, we
 note that
 \begin{equation}
  \begin{split}
   &\qquad
   \sigma'(\alpha)\frac{u_{x}(x)}{\sqrt{1+u_{x}^{2}(x)}}
   -
   \sigma'(\tilde{\alpha})\frac{\tilde{u}_{x}(x)}{\sqrt{1+\tilde{u}_{x}^{2}(x)}}
   \\
   &=
   (\sigma'(\alpha)-\sigma'(\tilde{\alpha}))\frac{u_{x}(x)}{\sqrt{1+u_{x}^{2}(x)}}
   +
   \sigma'(\tilde{\alpha})
   \left(
   \frac{u_{x}(x)}{\sqrt{1+u_{x}^{2}(x)}}
   -
   \frac{\tilde{u}_{x}(x)}{\sqrt{1+\tilde{u}_{x}^{2}(x)}}
   \right).
  \end{split}
 \end{equation}
 Let $M_{2}=\sup_{|t-\underline{\alpha}|\leq 1}|\sigma''(t)|$. Using
 $\sqrt{1+u_x^2}\geq |u_x|$ and $M_{2}<\infty$, we obtain
 \begin{align*}
  \left\|(\sigma'(\alpha)-\sigma'(\tilde{\alpha}))\frac{u_{x}}{\sqrt{1+u_{x}^{2}}}\right\|_{L^{2}(0,1)}
  &\leq\left|\int_{\tilde{\alpha}}^{\alpha}|\sigma''(t)|dt\right|
  \\
  &\leq M_{2}|\alpha-\tilde{\alpha}|
  \rightarrow 0
  \quad
  \text{as}\ 
  (u,\alpha)\to(\tilde{u},\tilde{\alpha}).
 \end{align*}
 Next using Lemma \ref{lem:LS.Vol_Aux_Inequality},
 \begin{equation}
  \begin{split}
   \left\|
   \sigma'(\tilde{\alpha})
   \left(
   \frac{u_{x}}{\sqrt{1+u_{x}^{2}}}
   -
   \frac{\tilde{u}_{x}}{\sqrt{1+\tilde{u}_{x}^{2}}}
   \right)
   \right\|_{L^2(0,1)}
   &\leq
   |\sigma'(\tilde{\alpha})|
   \|u_{x}-\tilde{u}_{x}\|_{L^2(0,1)}
   \\
   &\leq
   |\sigma'(\tilde{\alpha})|
   \|u-\tilde{u}\|_{H^2_{\mathrm{per.ave}}(0,1)}
   \rightarrow0
   \quad
   \text{as}\ 
   (u,\alpha)\rightarrow(\tilde{u},\tilde{\alpha}).
  \end{split}
\end{equation}
 Hence we have
 \begin{equation}
  \label{eq:LS.conti-Y-1}
   \left\|
    \sigma'(\alpha)\frac{u_{x}}{\sqrt{1+u_{x}^{2}}}
    -
    \sigma'(\tilde{\alpha})\frac{\tilde{u}_{x}}{\sqrt{1+\tilde{u}_{x}^{2}}}
    \right\|_{L^{2}(0,1)}
    \rightarrow0
    \quad
    \text{as}\ 
    (u,\alpha)\rightarrow(\tilde{u},\tilde{\alpha}).
 \end{equation}

 Next, we consider the second term of the right-hand side of
 \eqref{eq:LS.Freschet_OperatorNorm}. We transform the integrand as
 \begin{align*}
  &\quad
  \sigma'(\alpha)\frac{u_{xx}(x)}{\left(\sqrt{1+u_{x}^{2}(x)}\right)^{3}}-\sigma'(\tilde{\alpha})\frac{\tilde{u}_{xx}(x)}{\left(\sqrt{1+\tilde{u}_{x}^{2}(x)}\right)^{3}}
  \\
   &=(\sigma'(\alpha)-\sigma'(\tilde{\alpha}))\frac{u_{xx}(x)}{\left(\sqrt{1+u_{x}^{2}(x)}\right)^{3}}+\sigma'(\tilde{\alpha})\left(\frac{u_{xx}(x)}{\left(\sqrt{1+u_{x}^{2}(x)}\right)^{3}}-\frac{\tilde{u}_{xx}(x)}{\left(\sqrt{1+\tilde{u}_{x}^{2}(x)}\right)^{3}}\right)
  \\
  &=(\sigma'(\alpha)-\sigma'(\tilde{\alpha}))\frac{u_{xx}(x)}{\left(\sqrt{1+u_{x}^{2}(x)}\right)^{3}}
  \\
  &\qquad+\sigma'(\tilde{\alpha})\frac{u_{xx}(x)-\tilde{u}_{xx}(x)}{\left(\sqrt{1+u_{x}^{2}(x)}\right)^{3}}
  +
  \sigma'(\tilde{\alpha})\tilde{u}_{xx}(x)\left(\frac{1}{\left(\sqrt{1+u_{x}^{2}(x)}\right)^{3}}-\frac{1}{\left(\sqrt{1+\tilde{u}_{x}^{2}(x)}\right)^{3}}\right).
 \end{align*}
 Note that $\sqrt{1+u_x^2}\geq1$, we have
 \begin{align*}
  \left\|
  (\sigma'(\alpha)-\sigma'(\tilde{\alpha}))
  \frac{u_{xx}}{\left(\sqrt{1+u_{x}^{2}}\right)^{3}}
  \right\|_{L^{2}(0,1)}
  &\leq
  \left|
  \int_{\tilde{\alpha}}^{\alpha}|\sigma''(t)|dt
  \right|\|u_{xx}\|_{L^{2}(0,1)}
  \\
  &\leq
  M_{2}
  |\alpha-\tilde{\alpha}|\|u\|_{H^{2}_{\mathrm{per.ave}}(0,1)}
  \rightarrow 
  0\quad
  \text{as}
  \ (u,\alpha)\to(\tilde{u},\tilde{\alpha}).
 \end{align*}
 Next, again noting that $\sqrt{1+u_{x}^{2}}\ge 1$, we obtain
 \begin{align*}
  \left\|
  \sigma'(\tilde{\alpha})
  \frac{u_{xx}-\tilde{u}_{xx}}{\left(\sqrt{1+u_{x}^{2}}\right)^{3}}
  \right\|_{L^2(0,1)}
  &\leq
  |\sigma'(\tilde{\alpha})|\|u_{xx}-\tilde{u}_{xx}\|_{L^{2}(0,1)}\\
  &\leq
  |\sigma'(\tilde{\alpha})|\|u-\tilde{u}\|_{H^{2}_{\mathrm{per.ave}}(0,1)}
  \rightarrow 0
  \quad
  \text{as}
  \ (u,\alpha)\to(\tilde{u},\tilde{\alpha}).
 \end{align*}
 Next, using Lemma~\ref{lem:LS.Vol_Aux_Inequality}, and the Sobolev
 inequality \eqref{eq:LS.Sobolev_Embedding_Y}, we have
 for any $x\in(0,1)$,
 \begin{equation}
  \left|\frac{1}{\left(\sqrt{1+u_{x}^{2}(x)}\right)^{3}}-\frac{1}{\left(\sqrt{1+\tilde{u}_{x}^{2}(x)}\right)^{3}}\right|
  \leq 3|u_{x}(x)-\tilde{u}_{x}(x)|
  \leq
  3\sqrt{2}
  \|u-\tilde{u}\|_{H^{2}_{\mathrm{per.ave}}(0,1)}.
 \end{equation}
 Thus, we obtain
 \begin{align*}
  &\quad
  \left\|
  \sigma'(\tilde{\alpha})\tilde{u}_{xx}\left(\frac{1}{\left(\sqrt{1+u_{x}^{2}}\right)^{3}}-\frac{1}{\left(\sqrt{1+\tilde{u}_{x}^{2}}\right)^{3}}\right)
  \right\|_{L^{2}(0,1)}\\
  &\leq
  3\sqrt{2}
  |\sigma'(\tilde{\alpha})|
  \|u\|_{H^{2}_{\mathrm{per.ave}}(0,1)}\|u-\tilde{u}\|_{H^{2}_{\mathrm{per.ave}}(0,1)}
  \rightarrow 0
  \quad
  \text{as}
  \ (u,\alpha)\to(\tilde{u},\tilde{\alpha}).
 \end{align*}
 Combining the above estimates, we arrive at
 \begin{equation}
  \label{eq:LS.conti-Y-2}
  \left\|
   \sigma'(\alpha)\frac{u_{xx}}{\left(\sqrt{1+u_{x}^{2}}\right)^{3}}
   -
   \sigma'(\tilde{\alpha})\frac{\tilde{u}_{xx}}{\left(\sqrt{1+\tilde{u}_{x}^{2}}\right)^{3}}\right\|_{L^{2}(0,1)}
  \rightarrow 0
 \end{equation}
 as $(u,\alpha)\to(\tilde{u},\tilde{\alpha})$.

 Next, we compute the third term of the right-hand side of
 \eqref{eq:LS.Freschet_OperatorNorm}.  To show the convergence, we first
 observe that
  \begin{align*}
   &\quad\sigma(\alpha)\frac{1}{\left(\sqrt{1+u_{x}^{2}(x)}\right)^{3}}-\sigma(\tilde{\alpha})\frac{1}{\left(\sqrt{1+\tilde{u}_{x}^{2}(x)}\right)^{3}}\\
   &=(\sigma(\alpha)-\sigma(\tilde{\alpha}))\frac{1}{\left(\sqrt{1+u_{x}^{2}(x)}\right)^{3}}+\sigma(\tilde{\alpha})\left(\frac{1}{\left(\sqrt{1+u_{x}^{2}(x)}\right)^{3}}-\frac{1}{\left(\sqrt{1+\tilde{u}_{x}^{2}(x)}\right)^{3}}\right).
  \end{align*}
 Let $M_{1}=\sup_{|t-\underline{\alpha}|<1}|\sigma'(t)|$. Since
 $\sqrt{1+u_x^2(x)}\geq1$, we obtain
 \begin{align*}
   \left|(\sigma(\alpha)-\sigma(\tilde{\alpha}))\frac{1}{\left(\sqrt{1+u_{x}^{2}(x)}\right)^{3}}\right|
   \leq\left|\int_{\tilde{\alpha}}^{\alpha}|\sigma'(t)|dt\right|
  \leq 
  M_{1} |\alpha-\tilde{\alpha}|.
 \end{align*}
 Next, using Lemma \ref{lem:LS.Vol_Aux_Inequality} together with the
 Sobolev inequality \eqref{eq:LS.Sobolev_Embedding_Y}, we have
 for $x\in(0,1)$
 \begin{equation}
  \left|
   \sigma(\tilde{\alpha})\left(\frac{1}{\left(\sqrt{1+u_{x}^{2}(x)}\right)^{3}}-\frac{1}{\left(\sqrt{1+\tilde{u}_{x}^{2}(x)}\right)^{3}}\right)
  \right|
   \leq
  3|\sigma(\tilde{\alpha})||u_{x}(x)-\tilde{u}_{x}(x)|
  \leq
  3\sqrt{2}
  |\sigma(\tilde{\alpha})|\|u-\tilde{u}\|_{H^2_{\mathrm{per.ave}}(0,1)}.
  \end{equation}
  Combining these estimates, we obtain
  \begin{equation}
   \label{eq:LS.conti-Y-3}
    \sup_{x\in(0,1)}
    \left|\sigma(\alpha)\frac{1}{\left(\sqrt{1+u_{x}^{2}(x)}\right)^{3}}-\sigma(\tilde{\alpha})\frac{1}{\left(\sqrt{1+\tilde{u}_{x}^{2}(x)}\right)^{3}}\right|
    \leq
    M_{1} |\alpha-\tilde{\alpha}|
    +
    3\sqrt{2}
    |\sigma(\tilde{\alpha})|\|u-\tilde{u}\|_{H^2_{\mathrm{per.ave}}(0,1)}
    \to 0
  \end{equation}
  as $(u,\alpha)\to(\tilde{u},\tilde{\alpha})$.

 Next, we compute the fourth term of the right-hand side of
 \eqref{eq:LS.Freschet_OperatorNorm}. Observe that
 \begin{align*}
  &\quad
  \sigma(\alpha)
  \frac{u_{x}(x)u_{xx}(x)}{\left(\sqrt{1+u_{x}^{2}(x)}\right)^{5}}
  -
  \sigma(\tilde{\alpha})
  \frac{\tilde{u}_{x}(x)\tilde{u}_{xx}(x)}{\left(\sqrt{1+\tilde{u}_{x}^{2}(x)}\right)^{5}}
  \\
  &=
  (\sigma(\alpha)-\sigma(\tilde{\alpha}))\frac{u_{x}(x)u_{xx}(x)}{\left(\sqrt{1+u_{x}^{2}(x)}\right)^{5}}\\
  &\qquad
  +
  \sigma(\tilde{\alpha})
  (u_{xx}(x)-\tilde{u}_{xx}(x))
  \left(
  \frac{u_{x}(x)}{\left(\sqrt{1+u_{x}^{2}(x)}\right)^{5}}
  \right)
  +
  \sigma(\tilde{\alpha})
  \tilde{u}_{xx}(x)
  \left(
  \frac{u_x(x)}{\left(\sqrt{1+u_{x}^{2}(x)}\right)^{5}}
  -
  \frac{\tilde{u}_{x}(x)}{\left(\sqrt{1+\tilde{u}_{x}^{2}(x)}\right)^{5}}
  \right).
 \end{align*}
 Since $\sqrt{1+u_x^2(x)}\geq \max\{|u_{x}(x)|,1\}$ and
 $M_{1}=\sup_{|t-\underline{\alpha}|<1}|\sigma'(t)|$, we have
 \begin{align*}
  \left\|
  (\sigma(\alpha)-\sigma(\tilde{\alpha}))\frac{u_{x}u_{xx}}{\left(\sqrt{1+u_{x}^{2}}\right)^{5}}
  \right\|_{L^{2}(0,1)}
  &\leq
  \left|
  \int_{\tilde{\alpha}}^{\alpha}
  |\sigma'(t)|
  dt\right|\|u_{xx}\|_{L^{2}(0,1)}\\
   &\leq M_{1}|\alpha-\tilde{\alpha}|\|u\|_{H_{\text{per.ave}}^{2}(0,1)}
   \\
   &\to 0\quad
  \text{as}\ 
  (u,\alpha)\to(\tilde{u},\tilde{\alpha}).
 \end{align*}
 Similarly using $\sqrt{1+u_x^2(x)}\geq \max\{|u_{x}(x)|,1\}$, we have
 \begin{align*}
  \left\|
  \sigma(\tilde{\alpha})(u_{xx}-\tilde{u}_{xx})\frac{u_{x}}{\left(\sqrt{1+u_{x}^{2}}\right)^{5}}
  \right\|_{L^{2}(0,1)}
  &\leq
  |\sigma(\tilde{\alpha})|\|u-\tilde{u}\|_{H^{2}_{\mathrm{per.ave}}(0,1)}
  \to 0\quad
  \text{as}
  \ 
  (u,\alpha)\to(\tilde{u},\tilde{\alpha}).
 \end{align*}
 Next, we use Lemma \ref{lem:LS.Vol_Aux_Inequality} and
\begin{equation}
 \left|
  \frac{u_{x}(x)}{\left(\sqrt{1+u_{x}^{2}(x)}\right)^{5}}
  -
  \frac{\tilde{u}_{x}(x)}{\left(\sqrt{1+\tilde{u}_{x}^{2}(x)}\right)^{5}}
 \right|
 \leq
 15
 |u_{x}(x)-\tilde{u}_{x}(x)|.
\end{equation}
 Thus, we compute
 \begin{align*}
  &\quad\left\|\sigma(\tilde{\alpha})\tilde{u}_{xx}(x)\left(\frac{1}{\left(\sqrt{1+u_{x}^{2}(x)}\right)^{5}}-\frac{1}{\left(\sqrt{1+\tilde{u}_{x}^{2}(x)}\right)^{5}}\right)\right\|_{L^{2}(0,1)}\\
  &\leq
  |\sigma(\tilde{\alpha})|
  \|\tilde{u}_{xx}\|_{L^{2}(0,1)}
  \sup_{x\in(0,1)}\left|\frac{1}{\left(\sqrt{1+u_{x}^{2}(x)}\right)^{5}}-\frac{1}{\left(\sqrt{1+\tilde{u}_{x}^{2}(x)}\right)^{5}}\right|
  \\
  &\leq
  15|\sigma(\tilde{\alpha})|\|\tilde{u}_{xx}\|_{L^{2}(0,1)}
  \sup_{x\in(0,1)}|u_{x}(x)-\tilde{u}_{x}(x)|.
 \end{align*}
 From the Sobolev inequality \eqref{eq:LS.Sobolev_Embedding_Y}, we have
 \begin{equation}
  |\sigma(\tilde{\alpha})|\|\tilde{u}_{xx}\|_{L^{2}(0,1)}
  \sup_{x\in(0,1)}|u_{x}(x)-\tilde{u}_{x}(x)|
  \leq
  \sqrt{2}
  |\sigma(\tilde{\alpha})|\|\tilde{u}\|_{H^{2}_{\mathrm{per.ave}}(0,1)}
  \|u-\tilde{u}\|_{H^{2}_{\mathrm{per.ave}}(0,1)}
  \rightarrow0
 \end{equation}
 as $(u,\alpha)\rightarrow(\tilde{u},\tilde{\alpha})$. Therefore, we have
 \begin{equation}
  \label{eq:LS.conti-Y-4}
 \left\|
   \sigma(\alpha)
   \frac{u_{x}u_{xx}}{\left(\sqrt{1+u_{x}^{2}}\right)^{5}}
   -
   \sigma(\tilde{\alpha})
   \frac{\tilde{u}_{x}\tilde{u}_{xx}}{\left(\sqrt{1+\tilde{u}_{x}^{2}}\right)^{5}}
  \right\|_{L^2(0,1)}
  \rightarrow0
  \quad
  \text{as}\
  (u,\alpha)\rightarrow(\tilde{u},\tilde{\alpha}).
 \end{equation}

 Next, we compute the fifth term of the right-hand side of
 \eqref{eq:LS.Freschet_OperatorNorm}. Observing that
 \begin{align*}
  &\quad
  \sigma''(\alpha)\int_{0}^{1}\sqrt{1+u_{x}^{2}}\,dx-\sigma''(\tilde{\alpha})\int_{0}^{1}\sqrt{1+\tilde{u}_{x}^{2}}\,dx\\
  &=(\sigma''(\alpha)-\sigma''(\tilde{\alpha}))\int_{0}^{1}\sqrt{1+u_{x}^{2}}\,dx+\sigma''(\tilde{\alpha})\int_{0}^{1}\left(\sqrt{1+u_{x}^{2}(x)}-\sqrt{1+\tilde{u}_{x}^{2}(x)}\right)dx,
 \end{align*}
 we let $M_{3}=\sup_{|t-\underline{\alpha}|\leq 1}|\sigma'''(t)|$. Using
 the H\"older inequality, we deduce
 \begin{align*}
  \left|
  (\sigma''(\alpha)-\sigma''(\tilde{\alpha}))
  \int_{0}^{1}\sqrt{1+u_{x}^{2}}\,dx\right|
  &\leq\left|\int_{\tilde{\alpha}}^{\alpha}|\sigma'''(t)|dt
  \right|
  \left(
  \int_{0}^{1}\left(1+u_{x}^{2}(x)\right)dx
  \right)^{\frac{1}{2}}
  \\
  &\leq
  M_{3}|\alpha-\tilde{\alpha}|
  \left(
  1+\|u_{x}\|_{L^{2}(0,1)}
  \right)
  \\
  &\leq M_{3}|\alpha-\tilde{\alpha}|\left(1+\|u\|_{H^{2}_{\mathrm{per.ave}}(0,1)}\right)
  \rightarrow0
  \quad
  \text{as}\ 
  (u,\alpha)
  \rightarrow
  (\tilde{u},\tilde{\alpha}).
 \end{align*}
 Next, using Lemma \ref{lem:LS.Vol_Aux_Inequality},
 \begin{equation}
  \left|\sigma''(\tilde{\alpha})\int_{0}^{1}\left(\sqrt{1+u_{x}^{2}(x)}-\sqrt{1+\tilde{u}_{x}^{2}(x)}\right)dx\right|
   \leq
   |\sigma''(\tilde{\alpha})|
   \int_0^1
   |u_{x}(x)-\tilde{u}_{x}(x)|
   \,dx.
 \end{equation}
 By the H\"older inequality, we obtain
 \begin{equation}
  \int_0^1
  |u_{x}(x)-\tilde{u}_{x}(x)|
  \,dx
  \leq
  \|u-\tilde{u}\|_{H^2_{\mathrm{per.ave}}(0,1)}
 \end{equation}
 hence
 \begin{equation}
  \left|\sigma''(\tilde{\alpha})\int_{0}^{1}\left(\sqrt{1+u_{x}^{2}(x)}-\sqrt{1+\tilde{u}_{x}^{2}(x)}\right)dx\right|
   \leq
   |\sigma''(\tilde{\alpha})|
   \|u-\tilde{u}\|_{H^2_{\mathrm{per.ave}}(0,1)}
   \rightarrow0
    \quad
   \text{as}
   \
   (u,\alpha)
  \rightarrow
  (\tilde{u},\tilde{\alpha}).
 \end{equation}
 Therefore, we obtain
 \begin{equation}
  \label{eq:LS.conti-Y-5}
   \left|
    \int_{0}^{1}
    \left\{
     \sigma''(\alpha)
     \sqrt{1+u_{x}^{2}(x)}
     -
     \sigma''(\tilde{\alpha})
     \sqrt{1+\tilde{u}_{x}^{2}(x)}
    \right\}
    dx
   \right|
   \rightarrow
   0
   \quad
   \text{as}
   \
   (u,\alpha)
  \rightarrow
  (\tilde{u},\tilde{\alpha}).
 \end{equation}

 Finally, we compute the sixth term of the right-hand side of
 \eqref{eq:LS.Freschet_OperatorNorm}. We consider the following
 transform
 \begin{align*}
  &\quad
  \sigma'(\alpha)
  \int_{0}^{1}
  \frac{u_{x}(x)}{\sqrt{1+u_{x}^{2}(x)}}dx
  -
  \sigma'(\tilde{\alpha})
  \int_{0}^{1}\frac{\tilde{u}_{x}(x)}{\sqrt{1+\tilde{u}_{x}^{2}(x)}}dx\\
  &=
  (\sigma'(\alpha)-\sigma'(\tilde{\alpha}))
  \int_{0}^{1}\frac{u_{x}(x)}{\sqrt{1+u_{x}^{2}(x)}}dx
  +
  \sigma'(\tilde{\alpha})
  \int_{0}^{1}
  \left(
  \frac{u_{x}(x)}{\sqrt{1+u_{x}^{2}(x)}}
  -
  \frac{\tilde{u}_{x}(x)}{\sqrt{1+\tilde{u}_{x}^{2}(x)}}
  \right)dx.
 \end{align*}
 Recall $M_{2}=\sup_{|t-\underline{\alpha}|\leq 1}|\sigma''(t)|$. Since
 $\sqrt{1+u_x^2(x)}\geq|u_x(x)|$, we have
 \begin{equation}
  \left|(\sigma'(\alpha)-\sigma'(\tilde{\alpha}))\int_{0}^{1}\frac{u_{x}(x)}{\sqrt{1+u_{x}^{2}(x)}}dx\right|
   \leq\left|\int_{\tilde{\alpha}}^{\alpha}|\sigma''(t)|dt\right|
   \leq M_{2}|\alpha-\tilde{\alpha}|
   \to 0\quad
   \text{as}\ 
   (u,\alpha)\to(\tilde{u},\tilde{\alpha}).
 \end{equation}
 Next, applying Lemma \ref{lem:LS.Vol_Aux_Inequality} and the H\"older
 inequality, we have
 \begin{equation}
  \begin{split}
   \left|\sigma'(\tilde{\alpha})\int_{0}^{1}\left(\frac{u_{x}(x)}{\sqrt{1+u_{x}^{2}(x)}}-\frac{\tilde{u}_{x}(x)}{\sqrt{1+\tilde{u}_{x}^{2}(x)}}\right)dx\right|
   &\leq |\sigma'(\tilde{\alpha})|
   \int_0^1|u_{x}(x)-\tilde{u}_{x}(x)|\,dx
   \\
   &\leq |\sigma'(\tilde{\alpha})|
   \|u-\tilde{u}\|_{H^2_{\mathrm{per.ave}}(0,1)}
   \rightarrow0
  \end{split}
 \end{equation}
 as $(u,\alpha)\rightarrow(\tilde{u},\tilde{\alpha})$. Thus, we obtain
 \begin{equation}
  \label{eq:LS.conti-Y-6}
\left|\int_{0}^{1}\left\{\sigma'(\alpha)\frac{u_{x}(x)}{\sqrt{1+u_{x}^{2}(x)}}-\sigma'(\tilde{\alpha})\frac{\tilde{u}_{x}(x)}{\sqrt{1+\tilde{u}_{x}^{2}(x)}}\right\}dx\right|
  \to 0
  \quad
  \text{as}\ 
  (u,\alpha)\rightarrow(\tilde{u},\tilde{\alpha}).
 \end{equation}
 
 Combining all estimate \eqref{eq:LS.conti-Y-1},
 \eqref{eq:LS.conti-Y-2}, \eqref{eq:LS.conti-Y-3},
 \eqref{eq:LS.conti-Y-4}, \eqref{eq:LS.conti-Y-5},
 \eqref{eq:LS.conti-Y-6} to \eqref{eq:LS.Freschet_OperatorNorm}, we
 obtain
 \begin{equation}
  \|
   D\dot{E}((u,\alpha),\cdot)
   -
   D\dot{E}((\tilde{u},\tilde{\alpha}),\cdot)
   \|_{\mathscr{L}(Y)}
   \rightarrow
   0
 \end{equation}
as $(u,\alpha)\rightarrow(\tilde{u},\tilde{\alpha})$, which means
 $O\ni
 (u,\alpha)\rightarrow D\dot{E}((u,\alpha),\cdot)\in \mathscr{L}(Y)$ is
 continuous on $Y$.
\end{proof}

Now we are in a position to show Fr\'{e}chet differentialbility of
$\dot{E}$ on $Y$.

\begin{proof}[Proof of Proposition \ref{prop:LS.Frechet_Y}]
 Applying Lemma \ref{lem:LS.Gateau_Frechet} with Lemmas \ref{app:lem1}
 and \ref{app:lem2}, $\dot{E}$ is Fr\'{e}chet differentiable at
 $(\underline{u},\underline{\alpha})\in Y$. For any $(h,\beta)\in Y$,
\begin{equation}
 (\dot{E})'(\underline{u},\underline{\alpha})(h,\beta)
  =
  D\dot{E}((\underline{u},\underline{\alpha}),(h,\beta))
\end{equation}
hence $(\dot{E})'$ is continuous on $\mathscr{L}(Y)$ from Lemma
\ref{app:lem2}. Therefore $\dot{E}$ is continuously Fr\'{e}chet
differentiable on $Y$ and Proposition \ref{prop:LS.Frechet_Y} is proved.
\end{proof}

Finally, we show the main theorem in this section.

\begin{proof}[Proof of Theorem \ref{thm:Lojasiewicz-Simon}]
 Theorem \ref{thm:Lojasiewicz-Simon} can be deduced from Theorem
 \ref{thm:LS.LS}, together with Propositions \ref{prop:LS.Conti_diff_X},
 \ref{prop:LS.L_Fredholm}, \ref{prop:LS.Assumption_dense},
 \ref{prop:LS.CriticalPt_in_Y}, \ref{prop:LS.inverse_image_L},
 \ref{prop:LS.Image_on_Y}, \ref{prop:LS.Frechet_Y}. We will show that
 $E$ is analytic on the critical manifold $S$.

 The critical manifold $S$ is given by
 \begin{equation}
  S
   =
   \{(u,\alpha)\in Y:L(\dot{E}(u,\alpha))=0\}
   =
   \{(u,\alpha)\in Y:\dot{E}(u,\alpha)\in \Ker(L)\}.
 \end{equation}
 From Propositions \ref{prop:LS.L_Fredholm} and
 \ref{prop:LS.Conti_diff_X}, 
 we have
 for $(u,\alpha)\in Y$ with
 $\dot{E}(u,\alpha)\in\Ker(L)$ that, 
 \begin{equation}
  \sigma(\alpha)
   \left\{
    \int_{0}^{x}\frac{u_{x}(y)}{\sqrt{1+u_{x}^{2}(y)}}dy
    -
    \int_{0}^{1}\int_{0}^{x}\frac{u_{x}(y)}{\sqrt{1+u_{x}^{2}(y)}}dydx
   \right\}
   =0.
 \end{equation}
 Taking a derivative with respect to $x$, we have
 \begin{equation}
  \sigma(\alpha)\frac{u_{x}(x)}{\sqrt{1+u_{x}^{2}(x)}}=0.
 \end{equation}
 From \eqref{eq:1.Assumption_positivity}, we have $u_x=0$ so $u$ is a
 constant function on $(0,1)$. Since $u\in H^2_{\mathrm{per.ave}}(0,1)$,
 we have $u\equiv0$. Again using Proposition \ref{prop:LS.L_Fredholm}
 and \ref{prop:LS.Conti_diff_X}, we obtain
 \begin{equation}
  S=
   \begin{cases}
    \{0\}\times \R,
    & 
    \text{if}\ \sigma''(\overline{\alpha})=0,
    \\
    \{0\}\times \{\alpha\in \R:\sigma'(\alpha)=0\}, 
    & 
    \text{if}\ \sigma''(\overline{\alpha})\neq0. \\
   \end{cases}
 \end{equation}
 Thus, on the critical manifold $S$, the grain boundary energy $E$ is
 written as
 \begin{equation}
  E[u,\alpha]=\sigma(\alpha),\qquad (u,\alpha)\in S.
 \end{equation}
 Since $\sigma$ is an analytic function on $\R$, $E$ is also analytic on
 $S$. Therefore, we can apply Theorem \ref{thm:LS.LS} so there exist a
 neiborhood of $(\overline{u},\overline{\alpha})$ in $Y$ and constants
 $0< \theta\leq\frac{1}{2}$, $C>0$ such that
 \begin{equation}
  \|\dot{E}(u,\alpha)\|_{Y}
   \ge
   C|E[u,\alpha]-E[\overline{u},\overline{\alpha}]|^{1-\theta}
 \end{equation}
 for all $(u,\alpha)\in U$.
\end{proof}

\section{Applications}
\label{sec:Application}

In this section, we discuss some applications for the
\L{}ojasiewicz-Simon inequality \eqref{eq:LS.LS_for_GBE}.

\subsection{Long-time asymptotic behavior of the evolution problem}

Let us consider the following problem
\begin{equation}
 \label{eq:3.1.GeomEvolEq}
  \left\{
   \begin{aligned}
    \frac{u_{t}(x,t)}{\sqrt{1+u_{x}^{2}(x,t)}}
    &=
    \mu\sigma(\alpha(t))\left(\frac{u_{x}(x,t)}{\sqrt{1+u_{x}^{2}(x,t)}}\right)_{x},&\quad
    &x\in\mathbb{T},\ t>0,
    \\
    \alpha_t(t)
    &=-\gamma\sigma'(\alpha(t))\int_{0}^{1}\sqrt{1+u_{x}^{2}(x,t)}dx,
    &
    &t>0,
    \\
    u(x,0)&=u_0(x),&
    &x\in\mathbb{T},
    \\
    \alpha(0)
    &=
    \alpha_0.
   \end{aligned}
  \right.
\end{equation}
Here $\mathbb{T}=\R/\Z$ is a torus,
$u=u(x,t):\mathbb{T}\times[0,\infty)\rightarrow\R$ and
$\alpha:[0,\infty)\rightarrow\R$ are unknown functions,
$\sigma:\R\rightarrow\R$ is a given function,
$u_0:\mathbb{T}\rightarrow\R$, $\alpha_0\in\R$ are given initial data,
and $\mu,\gamma>0$ are positive constants. This problem was studied by
\cite{MR4292952}, motivated by the work \cite{MR4263432} to investigate
the evolution of grain boundaries with misorientation effects. From the
mathematical model, the graph $\Gamma_t:=\{u(x,t):x\in\mathbb{T}\}$
represents the grain boundary, and $\alpha$ represents the
misorientation angle, which is the difference between the lattice of the
two grains on either side of the grain boundary $\Gamma_t$.

Now, we introduce the previous results in \cite{MR4292952}.

\begin{proposition}%
 [{\cite{MR4292952}*{Theorem 5.1}}]
 \label{prop:3.1.gradient_estimate}
 Assume that $u_0$ is a Lipschitz function on $\mathbb{T}$ with a
 Lipschitz constant $M > 0$, $\beta\in(0,1)$, $\alpha_0\in\R$ and
 $\sigma\in C^1(\R)$ satisfies \eqref{eq:1.Assumption_positivity} and
 \eqref{eq:1.ConvexityAssumption}. Moreover, there exists $L > 0$ such that
 $|\sigma'(a)-\sigma'(b)|\leq L|a-b|$ for any $a,b\in\R$. Then, there
 exists a constant $u_\infty$ such that
 $\|u_\infty-u\|_{C^2(\mathbb{T})}$ goes to $0$ exponentially. In
 addition, the curvature $\kappa$ also goes to $0$ uniformly and
 exponentially on $\mathbb{T}$.
\end{proposition}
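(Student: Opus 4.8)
This proposition is quoted verbatim from \cite{MR4292952}*{Theorem~5.1}, so it is recalled rather than reproved here; the plan one follows to establish it rests decisively on the convexity hypothesis \eqref{eq:1.ConvexityAssumption}, which is \emph{not} available in the main part of the present work. First, use \eqref{eq:1.ConvexityAssumption} to control the misorientation: writing $\ell(t):=\int_0^1\sqrt{1+u_x^2(x,t)}\,dx\ge 1$, the second line of \eqref{eq:3.1.GeomEvolEq} reads $\alpha_t(t)=-\gamma\sigma'(\alpha(t))\ell(t)$, so the sign condition $\alpha\sigma'(\alpha)\ge0$ forces $t\mapsto\alpha(t)$ to be monotone and to keep the sign of $\alpha_0$; hence $|\alpha(t)|\le|\alpha_0|$ for all $t$ and $\alpha(t)\to\alpha_\infty$ for some $\alpha_\infty$. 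Together with $\sigma\in C^1$, the Lipschitz bound on $\sigma'$, and \eqref{eq:1.Assumption_positivity}, this yields uniform bounds $0<c_0\le\sigma(\alpha(t))\le C$ and $|\sigma'(\alpha(t))|\le C$ for all $t\ge0$.

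Next, establish uniform-in-time a priori estimates for $u$. In nondivergence form the first equation is $u_t=\mu\sigma(\alpha(t))u_{xx}/(1+u_x^2)$, a non-autonomous \emph{uniformly} parabolic quasilinear equation on $\mathbb{T}$ because $\sigma(\alpha(t))\in[c_0,C]$. Applying the parabolic maximum principle to $u_x$ (equivalently, using that the Lipschitz constant of a graph does not increase under this curve-shortening-type flow) gives $\|u_x(\cdot,t)\|_{L^\infty(\mathbb{T})}\le M$ for all $t$, after which De Giorgi--Nash--Moser estimates, Schauder estimates, and a bootstrap produce bounds on $\|u(\cdot,t)\|_{C^2(\mathbb{T})}$ --- indeed on all higher norms --- that are uniform for $t\ge1$, together with uniform H\"older continuity of $u(\cdot,t)$ in $t$. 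Here one also records that $\int_0^1 u(x,t)\,dx=\int_0^1 u_0\,dx$ is conserved (integrate \eqref{eq:LS.Integral_representation} over $x\in(0,1)$) and that periodicity of $u_x$ makes $u$ periodic, so that $v:=u-u_\infty$ with $u_\infty:=\int_0^1 u_0\,dx$ is a mean-zero $1$-periodic function.

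Then pass to the limit and extract the rate. The energy identity \eqref{eq:1.EnergyLaw} gives $\frac{d}{dt}E[u,\alpha]\le0$ with $E[u,\alpha]\ge c_0$, so $E[u,\alpha](t)$ converges and $\int_0^\infty(\frac1\gamma|\alpha_t|^2+\frac1\mu\int_0^1|u_t|^2(1+u_x^2)^{-1/2}\,dx)\,dt<\infty$; combined with the compactness furnished by the uniform estimates, every subsequential limit of $u(\cdot,t)$ in $C^2(\mathbb{T})$ is constant and hence equal to $u_\infty$, so $u(\cdot,t)\to u_\infty$ in $C^2(\mathbb{T})$ and the curvature $\kappa=u_{xx}(1+u_x^2)^{-3/2}\to0$ uniformly. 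For the exponential rate, test the divergence form $v_t=\mu\sigma(\alpha(t))(\arctan v_x)_x$ against $-v_{xx}$ and integrate by parts:
\[
 \frac12\frac{d}{dt}\|v_x(\cdot,t)\|_{L^2}^2
 =-\mu\sigma(\alpha(t))\int_0^1\frac{v_{xx}^2}{1+v_x^2}\,dx
 \le-\frac{\mu c_0}{1+M^2}\|v_{xx}\|_{L^2}^2
 \le-\frac{4\pi^2\mu c_0}{1+M^2}\|v_x\|_{L^2}^2,
\]
the last step being Wirtinger's inequality for the mean-zero $1$-periodic function $v_x$. Gronwall then gives $\|v_x(\cdot,t)\|_{L^2}\le e^{-\delta t}\|v_x(\cdot,0)\|_{L^2}$ with $\delta=4\pi^2\mu c_0/(1+M^2)>0$; interpolating this $L^2$-decay against the uniform higher-order bounds (or differentiating the equation and testing with higher derivatives) upgrades it to exponential decay of $\|v(\cdot,t)\|_{C^2(\mathbb{T})}$, hence of $\kappa(\cdot,t)$ uniformly in $x$.

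The step I expect to be hardest is the uniform-in-time a priori regularity: the gradient bound must not deteriorate as $t\to\infty$, which is where the geometric maximum principle is essential, and the parabolic machinery must be run for a non-autonomous equation whose coefficient $\mu\sigma(\alpha(t))$ is controlled only through the two-sided bound $[c_0,C]$ --- the lower bound from \eqref{eq:1.Assumption_positivity} being indispensable both for uniform parabolicity and for positivity of the decay rate $\delta$. The misorientation step, in turn, works only because of \eqref{eq:1.ConvexityAssumption}; when that hypothesis is dropped --- the regime this paper is really concerned with --- $\alpha$ need not be a priori bounded, the monotonicity and maximum principle are lost, and exponential convergence fails in general, which is precisely why the \L{}ojasiewicz-Simon inequality of Section~\ref{sec:LS} is the appropriate substitute.
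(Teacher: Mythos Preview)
Your reading is correct: the paper does not prove this proposition at all --- it is stated purely as a citation of \cite{MR4292952}*{Theorem~5.1} and is used only as background to motivate why the \L{}ojasiewicz--Simon approach is needed when \eqref{eq:1.ConvexityAssumption} is dropped. Your sketch of the argument from \cite{MR4292952} is a reasonable outline and goes well beyond what the present paper does; one small slip is the phrase ``periodicity of $u_x$ makes $u$ periodic,'' which is false in general (the paper itself remarks on this) --- the correct reason is that $u_0\in C(\mathbb{T})$ is periodic and the integral representation \eqref{eq:LS.Integral_representation} propagates this, after which $\int_0^1 u_x\,dx=0$ follows and your Wirtinger step is justified.
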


To prove asymptotics of $u$, we derive higher-order energy estimates for
\eqref{eq:3.1.GeomEvolEq}. The assumption \eqref{eq:1.ConvexityAssumption}
was needed to derive the maximum principle for $\alpha$. Recalling the
model for the evolution of grain boundaries on the plane, the grain
boundary energy function $\sigma$ should be periodic function with
period $\pi/2$. This is because the misorientation is considered to be
invariant under a 90-degree rotation of the crystal lattice. The
periodicity is incompatible with assumption
\eqref{eq:1.ConvexityAssumption}. Thus, it is interesting to show long-time
asymptotics of $u$ without assuming \eqref{eq:1.ConvexityAssumption}.

Using the \L{}ojasiewicz-Simon inequality \eqref{eq:LS.LS_for_GBE}, we
can relax the assumption \eqref{eq:1.ConvexityAssumption} and show
asymptotic behavior of solutions of \eqref{eq:3.1.GeomEvolEq}.

\begin{theorem}
 \label{thm:GBM_stability}
 Let $\sigma$ be an analytic function on $\R$ satisfying the positivity
 \eqref{eq:1.Assumption_positivity}. Assume $(u_0,\alpha_0)\in
 H_{\mathrm{per.ave}}^{2}(0,1)\times\R$. Let $(u,\alpha)$ be a
 time-global classical solution of \eqref{eq:3.1.GeomEvolEq} associated
 the initial data $(u_0,\alpha_0)$. Assume $(u(x,t),\alpha(t))$
 sufficiently close to an equilibrium point $(0,\overline{\alpha})\in
 H^2_{\mathrm{per.ave}}(0,1)\times\R$ in
 $H^2_{\mathrm{per.ave}}(0,1)\times\R$ for any $t>0$. Then, there exists
 a positive constant $\Cl{const:3.stability}>0$ depending only
 on $\mu$, $\gamma$, $\alpha_0$,
 $\|u_0\|_{H_{\mathrm{per.ave}}^{2}(0,1)}$, $\theta$,
 $\Cr{const:LS_GBE}$, where $\theta$, $\Cr{const:LS_GBE}$ are taken in
 Theorem \ref{thm:Lojasiewicz-Simon} (The \L{}ojasiewicz-Simon
 inequality), such that for $0<t<s<\infty$
 \begin{equation}
  \label{eq:3.stability_Estimate}
  |\alpha(t)-\alpha(s)|,\ 
   \|u(t)-u(s)\|_{L^2(0,1)}
   \leq
   \Cr{const:3.stability}
   |E[u,\alpha](t)
   -
   E[0,\overline{\alpha}]|^\theta.
 \end{equation}
\end{theorem}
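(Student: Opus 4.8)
The plan is to run the standard \L{}ojasiewicz--Simon convergence scheme, using that \eqref{eq:3.1.GeomEvolEq} is a weighted gradient flow for $E$ with dissipation given by \eqref{eq:1.EnergyLaw}. Set $U(t):=(u(\cdot,t),\alpha(t))$ and $H(t):=E[u,\alpha](t)-E[0,\overline{\alpha}]$, and let $w=w(\cdot,t)$ be the first component of $\dot{E}(U(t))$, so by Proposition~\ref{prop:LS.Image_on_Y} we have $w\in H^2_{\mathrm{per.ave}}(0,1)$ with $w_x=\sigma(\alpha)\,u_x/\sqrt{1+u_x^2}$, $w_{xx}=\sigma(\alpha)\bigl(u_x/\sqrt{1+u_x^2}\bigr)_x=\mu^{-1}u_t/\sqrt{1+u_x^2}$ (the last equality from the first equation of \eqref{eq:3.1.GeomEvolEq}), while the scalar component is $\dot{E}_\alpha=\sigma'(\alpha)\int_0^1\sqrt{1+u_x^2}\,dx=-\gamma^{-1}\alpha_t$. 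Since $(u,\alpha)$ is a classical solution on $\mathbb{T}$, each $u(\cdot,t)$ is periodic, so $\int_0^1 u_x(\cdot,t)\,dx=0$; and since the orbit stays in a small ball $B_\delta\subset Y$ about $(0,\overline{\alpha})$, \eqref{eq:LS.Sobolev_Embedding_Y} gives a uniform bound $\|u_x(\cdot,t)\|_{L^\infty}\le M$. From \eqref{eq:1.EnergyLaw} and the two substitutions just noted, $H$ is non-increasing and
\begin{equation}
 -H'(t)
 =\frac{1}{\gamma}|\alpha_t(t)|^2+\frac{1}{\mu}\int_0^1\frac{u_t^2(x,t)}{\sqrt{1+u_x^2(x,t)}}\,dx
 =\gamma\,\dot{E}_\alpha^2+\mu\int_0^1\sqrt{1+u_x^2}\;w_{xx}^2\,dx .
\end{equation}

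The core of the argument will be to compare $-H'$ both with $\|\dot{E}(U)\|_Y^2=\|w_x\|_{L^2}^2+\|w_{xx}\|_{L^2}^2+\dot{E}_\alpha^2$ and with $\|U_t\|^2:=\|u_t\|_{L^2(0,1)}^2+|\alpha_t|^2$. Using $w_{xx}=\sigma(\alpha)\,u_{xx}/(1+u_x^2)^{3/2}$, $u_t=\mu\sqrt{1+u_x^2}\,w_{xx}$ and $1\le\sqrt{1+u_x^2}\le\sqrt{1+M^2}$, one reads off directly $\|w_{xx}\|_{L^2}^2\le\mu^{-1}(-H')$, $\dot{E}_\alpha^2\le\gamma^{-1}(-H')$, $\|u_t\|_{L^2}^2\le\mu\sqrt{1+M^2}\,(-H')$, $|\alpha_t|^2\le\gamma(-H')$, and $\|u_{xx}\|_{L^2}^2\le(1+M^2)^{5/2}\sigma(\alpha)^{-2}\mu^{-1}(-H')$. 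The one term not immediately controlled is the lower-order $\|w_x\|_{L^2}^2\le\sigma(\alpha)^2\|u_x\|_{L^2}^2$; here I will use the periodicity of $u(\cdot,t)$: applying \eqref{eq:LS.Sobolev_Embedding1} to $f=u_x$ together with $\int_0^1 u_x\,dx=0$ gives $\|u_x\|_{L^2}\le\|u_x\|_{L^\infty}\le\|u_{xx}\|_{L^2}$, and then the factor $\sigma(\alpha)^2$ cancels against the bound for $\|u_{xx}\|_{L^2}^2$, leaving $\|w_x\|_{L^2}^2\le(1+M^2)^{5/2}\mu^{-1}(-H')$. Collecting these, there will be constants $K_1,K_2>0$ depending only on $\mu$, $\gamma$, $M$ with
\begin{equation}
 \|\dot{E}(U(t))\|_Y^2\le K_1\,(-H'(t)),\qquad
 \|U_t(t)\|^2\le K_2\,(-H'(t)),\qquad t>0 .
\end{equation}

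Next I would dispose of the sign of $H$. Combining the first estimate with \eqref{eq:LS.LS_for_GBE} and $E\ge\Cr{const:GBE_positivity}$: if $H(t_0)<0$ for some $t_0$, then for $t\ge t_0$ one gets $-H'(t)\ge K_1^{-1}\Cr{const:LS_GBE}^2(-H(t))^{2(1-\theta)}$ with $-H$ positive and non-decreasing and $2(1-\theta)\ge1$, which forces $-H(t)\to\infty$, contradicting $-H(t)\le E[0,\overline{\alpha}]-\Cr{const:GBE_positivity}$; hence $H\ge0$, and if $H(t_1)=0$ for some $t_1$ then $H\equiv0$ and $u_t\equiv\alpha_t\equiv0$ on $[t_1,\infty)$, so $(u,\alpha)$ is eventually stationary and \eqref{eq:3.stability_Estimate} follows by a straightforward truncation/limiting argument. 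Thus we may assume $H(t)>0$ for all $t>0$, and then, by \eqref{eq:LS.LS_for_GBE} and the two estimates of the previous paragraph,
\begin{equation}
 -\frac{d}{dt}H(t)^{\theta}
 =\theta\,H(t)^{\theta-1}(-H'(t))
 \ge\theta\,\Cr{const:LS_GBE}\,\|\dot{E}(U(t))\|_Y^{-1}(-H'(t))
 \ge\frac{\theta\,\Cr{const:LS_GBE}}{\sqrt{K_1}}\,\sqrt{-H'(t)}
 \ge\frac{\theta\,\Cr{const:LS_GBE}}{\sqrt{2K_1K_2}}\Bigl(\|u_t(\cdot,t)\|_{L^2(0,1)}+|\alpha_t(t)|\Bigr),
\end{equation}
where the third inequality uses $-H'\ge K_1^{-1}\|\dot{E}(U)\|_Y^2$ and the last one uses $\|u_t\|_{L^2}+|\alpha_t|\le\sqrt{2}\,\|U_t\|\le\sqrt{2K_2}\,\sqrt{-H'}$. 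Integrating over $[t,s]$, together with $\|u(t)-u(s)\|_{L^2(0,1)}\le\int_t^s\|u_\tau(\cdot,\tau)\|_{L^2(0,1)}\,d\tau$, $|\alpha(t)-\alpha(s)|\le\int_t^s|\alpha_\tau(\tau)|\,d\tau$, and $H(s)^\theta\ge0$, yields \eqref{eq:3.stability_Estimate} with $\Cr{const:3.stability}=\sqrt{2K_1K_2}\,/(\theta\,\Cr{const:LS_GBE})$; keeping track of how $K_1$, $K_2$ and $M$ depend on the data gives the asserted dependence on $\mu,\gamma,\alpha_0,\|u_0\|_{H^2_{\mathrm{per.ave}}(0,1)},\theta,\Cr{const:LS_GBE}$.

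The step I expect to be the main obstacle is the estimate $\|\dot{E}(U(t))\|_Y^2\le K_1(-H'(t))$. On the space $H^2_{\mathrm{per.ave}}(0,1)$ there is \emph{no} Poincar\'e inequality of the form $\|f_x\|_{L^2}\lesssim\|f_{xx}\|_{L^2}$ (constants have nonzero first derivative), so the lower-order term $\|w_x\|_{L^2}=|\sigma(\alpha)|\,\|u_x/\sqrt{1+u_x^2}\|_{L^2}$ cannot be absorbed into the dissipation on the general class on which the \L{}ojasiewicz--Simon inequality was established; what makes it work is that the \emph{PDE} solutions $u(\cdot,t)$ are genuinely periodic, so $u_x$ has zero mean and \eqref{eq:LS.Sobolev_Embedding1} applies, together with the uniform $C^1$-bound coming from the hypothesis that the orbit remains near $(0,\overline{\alpha})$ in $H^2_{\mathrm{per.ave}}(0,1)$. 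Everything else is the textbook \L{}ojasiewicz--Simon scheme plus bookkeeping of constants.
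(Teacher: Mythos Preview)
Your argument is correct and follows the same \L{}ojasiewicz--Simon scheme as the paper: rewrite the dissipation $-H'(t)$, absorb the lower-order piece of $\|\dot{E}\|_Y$ by a Poincar\'e/Rolle-type inequality exploiting spatial periodicity, combine with \eqref{eq:LS.LS_for_GBE} to obtain a differential inequality for $H^\theta$, and integrate. Two differences are worth flagging.

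First, for the uniform gradient bound you rely on \eqref{eq:LS.Sobolev_Embedding_Y} together with the hypothesis that the orbit stays in a small $Y$-ball about $(0,\overline{\alpha})$; this makes $M$ depend on the radius of that ball. The paper instead invokes the a~priori gradient estimate \eqref{eq:3.Gradient_Estimate} (quoted from \cite{MR4292952}), which yields a bound depending only on $\sigma(\alpha_0)$, $\Cr{const:GBE_positivity}$, and $\sup_x|(u_0)_x|$, and then \eqref{eq:LS.Sobolev_Embedding_Y} to control the latter by $\|u_0\|_{H^2_{\mathrm{per.ave}}(0,1)}$. That is exactly what produces the stated dependence of $\Cr{const:3.stability}$ on $\alpha_0$ and $\|u_0\|_{H^2_{\mathrm{per.ave}}(0,1)}$; your closing sentence ``keeping track of how \ldots $M$ depends on the data'' is not justified without this extra input, since the radius of the \L{}ojasiewicz--Simon neighborhood is not a priori a function of those quantities.

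Second, your treatment of the sign of $H$ (the blow-up argument ruling out $H<0$) is more careful than the paper's, which tacitly computes the derivative of $|H|^\theta$ as if $H>0$. For the Poincar\'e step, the paper applies it directly to $u_x/\sqrt{1+u_x^2}$ via Rolle's theorem (it vanishes where $u_x$ does), while you apply it to $u_x$ using its zero mean and then pass through $\|u_{xx}\|_{L^2}$; both are valid, the paper's route being marginally shorter.
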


From Theorem \ref{thm:GBM_stability}, we immediately obtain the
following corollary.
\begin{corollary}
 In addition to the assumption in Theorem \ref{thm:GBM_stability},
 assume there is a sequence $\{t_n\}_{n=1}^\infty$ such that
 $t_n\rightarrow\infty$ and
 \begin{equation}
  (u(x,t_n),\alpha(t_n))\rightarrow (0,\overline{\alpha})
   \quad
   \text{in}\
   L^2(0,1)\times\R.
 \end{equation} Then $(u,\alpha)$ converges to
 $(0,\overline{\alpha})$ in $L^2(0,1)\times\R$ as $t\rightarrow\infty$
 and
 \begin{equation}
  \label{eq:3.stability}
  |\alpha(t)-\overline{\alpha}|,\ 
   \|u(t)\|_{L^2(0,1)}
   \leq
   \Cr{const:3.stability}
   |E[u,\alpha](t)
   -
   E[0,\overline{\alpha}]|^\theta
 \end{equation}
 for $t>0$.
\end{corollary}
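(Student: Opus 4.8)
The plan is to pass to the limit $s=t_n\to\infty$ in the estimate of Theorem~\ref{thm:GBM_stability} and then to identify the limit of the energy. Fix $t>0$. For $n$ large enough that $t_n>t$, \eqref{eq:3.stability_Estimate} gives
\begin{equation*}
 |\alpha(t)-\alpha(t_n)|,\ \|u(t)-u(t_n)\|_{L^2(0,1)}
 \le
 \Cr{const:3.stability}\,\bigl|E[u,\alpha](t)-E[0,\overline{\alpha}]\bigr|^{\theta}.
\end{equation*}
Since $(u(\cdot,t_n),\alpha(t_n))\to(0,\overline{\alpha})$ in $L^2(0,1)\times\R$, letting $n\to\infty$ turns the left-hand side into $|\alpha(t)-\overline{\alpha}|$ and $\|u(t)\|_{L^2(0,1)}$, which is exactly \eqref{eq:3.stability}, valid for every $t>0$.

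It remains to prove that the right-hand side of \eqref{eq:3.stability} tends to $0$ as $t\to\infty$; this simultaneously yields $\|u(t)\|_{L^2(0,1)}\to0$ and $\alpha(t)\to\overline{\alpha}$, i.e.\ the claimed convergence of $(u,\alpha)$ to $(0,\overline{\alpha})$ in $L^2(0,1)\times\R$. Because $(u,\alpha)$ is a classical solution of \eqref{eq:3.1.GeomEvolEq}, the energy dissipation identity \eqref{eq:1.EnergyLaw} holds, so $t\mapsto E[u,\alpha](t)$ is non-increasing; since $E[u,\alpha](t)=\sigma(\alpha(t))\int_0^1\sqrt{1+u_x^2(x,t)}\,dx\ge\Cr{const:GBE_positivity}>0$, it converges to a finite limit $E_\infty$ as $t\to\infty$. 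To compute $E_\infty$ we evaluate $E$ along $\{t_n\}$. By the hypothesis of Theorem~\ref{thm:GBM_stability}, $\{u(\cdot,t_n)\}$ stays close to $0$, hence is bounded, in $H^2_{\mathrm{per.ave}}(0,1)$; combined with $u(\cdot,t_n)\to0$ in $L^2(0,1)$ and the compactness of the Sobolev embedding $H^2(0,1)\hookrightarrow H^1(0,1)$, this forces $u(\cdot,t_n)\to0$ in $H^1(0,1)$, so $\|u_x(\cdot,t_n)\|_{L^2(0,1)}\to0$. The first inequality in Lemma~\ref{lem:LS.Vol_Aux_Inequality} then gives
\begin{equation*}
 \left|\int_0^1\sqrt{1+u_x^2(x,t_n)}\,dx-1\right|
 \le\int_0^1|u_x(x,t_n)|\,dx
 \le\|u_x(\cdot,t_n)\|_{L^2(0,1)}\longrightarrow 0,
\end{equation*}
while $\sigma(\alpha(t_n))\to\sigma(\overline{\alpha})$ by continuity of $\sigma$. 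Hence $E[u,\alpha](t_n)\to\sigma(\overline{\alpha})=E[0,\overline{\alpha}]$, so $E_\infty=E[0,\overline{\alpha}]$ and therefore $\bigl|E[u,\alpha](t)-E[0,\overline{\alpha}]\bigr|\to0$ as $t\to\infty$.

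The main point is the identification of $E_\infty$: the subsequence $(u(\cdot,t_n),\alpha(t_n))$ is only assumed to converge in $L^2(0,1)\times\R$, a topology in which the length functional $\int_0^1\sqrt{1+u_x^2}\,dx$ is not continuous, so one must first upgrade $L^2$-convergence to $H^1$-convergence; this is precisely where the uniform $H^2$-closeness built into the hypotheses of Theorem~\ref{thm:GBM_stability} enters. Everything else, namely the passage to the limit in \eqref{eq:3.stability_Estimate} and the monotonicity of the energy along the flow, is routine, and the corollary follows by combining \eqref{eq:3.stability} with $\bigl|E[u,\alpha](t)-E[0,\overline{\alpha}]\bigr|\to0$.
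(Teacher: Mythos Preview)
Your proof is correct and the core step---setting $s=t_n$ in \eqref{eq:3.stability_Estimate} and letting $n\to\infty$---is exactly the paper's argument, which in fact consists of that single sentence. Where you go further is in justifying the convergence claim $(u,\alpha)\to(0,\overline{\alpha})$: the paper's proof stops at \eqref{eq:3.stability} and defers the energy-convergence discussion to a subsequent Remark, whereas you supply it explicitly by combining the monotonicity of $E$ with the identification $E_\infty=E[0,\overline{\alpha}]$ along $\{t_n\}$. Your observation that the $L^2$-convergence of $u(\cdot,t_n)$ must be upgraded to $H^1$-convergence (via the uniform $H^2$-closeness assumed in Theorem~\ref{thm:GBM_stability} and compact embedding) before the length functional can be passed to the limit is a genuine point the paper does not spell out; it is correct and makes your argument self-contained.
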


\begin{proof}
 Taking $s=t_n$ and letting $n\rightarrow\infty$ in
 \eqref{eq:3.stability_Estimate}, we obtain \eqref{eq:3.stability}.
\end{proof}

To prove Theorem \ref{thm:GBM_stability}, we use the following a priori
gradient estimate.

\begin{proposition}[\cite{MR4292952}*{Theorem 4.2}]
 Let $(u,\alpha)$ be a solution of \eqref{eq:3.1.GeomEvolEq} and
 let $v=\sqrt{1+u_x^2}$. Assume \eqref{eq:1.GBE}. Then, for all
 $0<x_0<1$ and $t_0>0$,
 \begin{equation}
  \label{eq:3.Gradient_Estimate}
  v(x_0,t_0)
   \leq
   \frac{\sigma(\alpha(0))}{\Cr{const:GBE_positivity}}
   \sup_{0<x<1}v^2(x,0).
 \end{equation}
\end{proposition}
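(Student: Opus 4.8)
The plan is to exploit the quasilinear parabolic structure of the graph equation together with the scalar parabolic maximum principle on the torus, where periodicity removes all boundary terms. First I would recast the first line of \eqref{eq:3.1.GeomEvolEq} in non-divergence form: since $\bigl(\tfrac{u_x}{\sqrt{1+u_x^2}}\bigr)_x=\tfrac{u_{xx}}{(1+u_x^2)^{3/2}}$, multiplying that equation by $v=\sqrt{1+u_x^2}$ gives
\[
 u_t=\mu\sigma(\alpha(t))\,\frac{u_{xx}}{1+u_x^2},
\]
so that $w:=u_x$ solves
\[
 w_t=\mu\sigma(\alpha(t))\left(\frac{w_{xx}}{1+w^2}-\frac{2w\,w_x^2}{(1+w^2)^2}\right),
\]
a uniformly parabolic equation along the (classical) solution whose leading coefficient $\mu\sigma(\alpha(t))/(1+w^2)$ is positive; note that the unknown $\alpha$ enters here only through the time-dependent positive factor $\mu\sigma(\alpha(t))$ and thus does not interfere with comparisons in $x$.

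The core computation is the evolution equation for $v$ itself. Using $v_x=\tfrac{w\,w_x}{v}$, $v_t=\tfrac{w\,w_t}{v}$, the identity $w^2=v^2-1$, and $w\,w_{xx}=v\,v_{xx}-\tfrac{w_x^2}{v^2}$ (obtained by differentiating $v\,v_x=w\,w_x$), a direct calculation gives
\[
 v_t=\mu\sigma(\alpha(t))\left(\frac{v_{xx}}{v^2}-\frac{(2v^2-1)\,w_x^2}{v^5}\right).
\]
Because $v\ge 1$, the second term is nonpositive, so $v$ is a subsolution of a linear parabolic equation, $v_t\le\mu\sigma(\alpha(t))\,v_{xx}/v^2$, pointwise on $\mathbb{T}\times(0,\infty)$.

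I would then invoke the parabolic maximum principle: at a point $\bar x\in\mathbb{T}$ realizing $\max_{x}v(x,t)$ one has $v_{xx}(\bar x,t)\le 0$, so from the displayed subsolution inequality $v_t(\bar x,t)\le 0$; hence $m(t):=\max_{x\in\mathbb{T}}v(x,t)$ is non-increasing in $t$. Consequently
\[
 v(x_0,t_0)\le m(t_0)\le m(0)=\sup_{0<x<1}v(x,0).
\]
To conclude, $v\ge 1$ yields $\sup_{0<x<1}v(x,0)\le\sup_{0<x<1}v^2(x,0)$, while \eqref{eq:1.Assumption_positivity} yields $\sigma(\alpha(0))/\Cr{const:GBE_positivity}\ge 1$; multiplying these gives \eqref{eq:3.Gradient_Estimate}. (This argument in fact proves the sharper bound $v(x_0,t_0)\le\sup_{0<x<1}v(x,0)$; the weaker form in the statement is the one used afterwards.)

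The main obstacle is not the maximum-principle step, which is routine, but two technical points. First, the identity for $v_t$ involves division by $w=u_x$ and so must be established with care on the set $\{u_x=0\}$, where $v$ attains its minimum value $1$; the cleanest route is to derive the equation for $v^2$ and then take square roots, after which the displayed formula for $v_t$ holds everywhere by continuity. Second, one must justify the regularity underlying these pointwise manipulations — that the solution is $C^2$ in $x$ and $C^1$ in $t$ for $t>0$ and that $t\mapsto m(t)$ is Lipschitz — which for a merely weak solution follows from interior parabolic regularity, using the uniform positivity of the leading coefficient guaranteed by \eqref{eq:1.Assumption_positivity}. (This estimate is the a priori gradient bound of \cite{MR4292952}*{Theorem 4.2}, where these points are carried out in detail.)
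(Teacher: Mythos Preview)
Your argument is correct, and in fact the paper does not supply its own proof of this proposition: it is quoted verbatim from \cite{MR4292952}*{Theorem~4.2} and used as a black box in the proof of Theorem~\ref{thm:GBM_stability}. The closest the present paper comes to a proof is in the subsequent Remark, where the authors differentiate the equation, multiply by $u_x$, and obtain
\[
\left(\tfrac12 u_x^2\right)_t \le \frac{\mu\sigma(\alpha(t))}{1+u_x^2}\left(\tfrac12 u_x^2\right)_{xx},
\]
then apply the maximum principle to conclude $u_x^2(x,t)\le\sup_x (u_0)_x^2(x)$. This is exactly your strategy, only carried out for $u_x^2$ rather than $v=\sqrt{1+u_x^2}$; the two are trivially equivalent since $v^2=1+u_x^2$, and both yield the sharper bound $v(x_0,t_0)\le\sup_x v(x,0)$ that you noted. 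Working with $u_x^2$ has the minor advantage of sidestepping the division-by-$w$ issue you flagged, since the identity $(u_x^2)_{xx}=2u_{xx}^2+2u_xu_{xxx}$ holds without any case analysis.
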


\begin{proof}[Proof of Theorem \ref{thm:GBM_stability}]
 Using \eqref{eq:3.1.GeomEvolEq}, we transform the energy dissipation
 law as
 \begin{equation}
  \label{eq:3.EnergyDissipationLaw}
   \begin{split}
    &\quad
    \frac{d}{dt}E[u,\alpha](t)
    \\
    &=
    -\frac{1}{\gamma}|\alpha_t(t)|^{2}
    -
    \frac{1}{\mu}
    \int_0^1
    \left|
    \frac{u_{t}(x,t)}{\sqrt{1+u_{x}^{2}(x,t)}}
    \right|^2
    \sqrt{1+u_{x}^{2}(x,t)}dx
    \\
    &=
    -
    \frac{1}{\gamma}|\alpha_t(t)|^{2}
    -
    \frac{1}{\mu}
    \left\|
    \frac{u_t(t)}{\sqrt{1+u_x^2(t)}}
    \right\|_{L^2(0,1\,;\,\sqrt{1+u_{x}^{2}(t)}dx)}^2
    \\
    &=
    -
   \sqrt{
   \frac{1}{\gamma}|\alpha_t(t)|^{2}
   +
   \frac{1}{\mu}
   \left\|
   \frac{u_t(t)}{\sqrt{1+u_x^2(t)}}
   \right\|_{L^2(0,1\,;\,\sqrt{1+u_{x}^{2}(t)}dx)}^2
   }
   \\
   &\quad
   \times
   \sqrt{
   \gamma
   \left|\sigma'(\alpha(t))
   \int_0^1
   \sqrt{1+u_x^2(x,t)}\,dx
   \right|^{2}
   +
   \mu
   \left\|
   \sigma(\alpha(t))
   \left(
   \frac{u_x(t)}{\sqrt{1+u_x^2(t)}}
   \right)_x
   \right\|_{L^2(0,1\,;\,\sqrt{1+u_{x}^{2}(t)}dx)}^2
   },
   \end{split}
 \end{equation}
 where $\|\cdot\|_{L^2(0,1\,;\,\sqrt{1+u_{x}^{2}(t)}dx)}$ is the
 weighted $L^2$ norm, namely for a Lebesgue measurable function $f$ on
 $(0,1)$,
 \begin{equation}
  \|f\|_{L^2(0,1\,;\,\sqrt{1+u_{x}^{2}(t)}dx)}
   =
   \left(
    \int_0^1
    |f(x)|^2
    \sqrt{1+u_x^2(x,t)}
    \,dx
   \right)^{\frac{1}{2}}.
 \end{equation}
 From the \L{}ojasiewicz-Simon inequality \eqref{eq:LS.LS_for_GBE}, we have
 \begin{multline}
  \label{eq:3.LS_Inequality_GBE}
 |\sigma(\alpha)|^2
  \left(
  \left\|
  \frac{u_x(t)}{\sqrt{1+u_x^2(t)}}
  \right\|_{L^2(0,1)}^2
  +
  \left\|
  \left(
  \frac{u_x(t)}{\sqrt{1+u_x^2(t)}}
  \right)_x
  \right\|_{L^2(0,1)}^2
  \right)
  +
  |\sigma'(\alpha)|^2
  \left(
  \int_0^1\sqrt{1+u_x^2(x,t)}\,dx\right)^2
  \\
  \geq
  \Cr{const:LS_GBE}^2|E[u,\alpha](t)-E[0,\overline{\alpha}]|^{2(1-\theta)}.
 \end{multline}
 Note that $u$ is a periodic function on $(0,1)$, so for any $t>0$,
 there is $0<x_1<1$ such that $u_x(x_1,t)=0$. Then, for any $0<x<1$,
 \begin{equation}
  \left|
  \frac{u_x(x,t)}{1+u^2_x(x,t)}
  \right|
   =
   \left|
    \frac{u_x(x,t)}{1+u^2_x(x,t)}
    -
    \frac{u_x(x_1,t)}{1+u^2_x(x_1,t)}
   \right|
   \leq
   \left|\int_{x_1}^x
   \left|
    \left(
     \frac{u_x(y,t)}{1+u^2_x(y,t)}
    \right)_y
   \right|
    \,dy
   \right|
   \leq
   \left\|
    \left(
     \frac{u_x}{1+u^2_x}
    \right)_x
   \right\|_{L^2(0,1)},
 \end{equation}
 hence
 \eqref{eq:3.LS_Inequality_GBE}
 turns into 
 \begin{equation}
  2|\sigma(\alpha)|^2
   \left\|
    \left(
     \frac{u_x(t)}{\sqrt{1+u_x^2(t)}}
    \right)_x
   \right\|_{L^2(0,1)}^2
   +
   |\sigma'(\alpha)|^2
   \left(
    \int_0^1\sqrt{1+u_x^2(x,t)}\,dx\right)^2
   \\
  \geq
   \Cr{const:LS_GBE}^2
   |E[u,\alpha](t)-E[0,\overline{\alpha}]|^{2(1-\theta)}.
 \end{equation}
 Then, applying this inequality to the energy dissipation
 law~\eqref{eq:3.EnergyDissipationLaw} with $\sqrt{1+u_x^2}\geq1$, we
 have
 \begin{equation}
  \begin{split}
   &\quad
   \frac{d}{dt}E[u,\alpha](t)
   \\
   &\leq
   -
   \frac{\sqrt{\min\{\gamma,\mu\}}}{2}
   \sqrt{
   \frac{1}{\gamma}|\alpha_t(t)|^{2}
   +
   \frac{1}{\mu}
   \left\|
   \frac{u_t(t)}{\sqrt{1+u_x^2(t)}}
   \right\|_{L^2(0,1\,;\,\sqrt{1+u_{x}^{2}(t)}dx)}^2
   }
   \\
   &\quad
   \times
   \sqrt{
   2
   \left|\sigma'(\alpha(t))
   \int_0^1
   \sqrt{1+u_x^2(x,t)}\,dx
   \right|^{2}
   +
   2
   \left\|
   \sigma(\alpha(t))
   \left(
   \frac{u_x(t)}{\sqrt{1+u_x^2(t)}}
   \right)_x
   \right\|_{L^2(0,1)}^2
   }
   \\
   &\leq
   -
   \Cr{const:3.LS_with_mobility}
   |E[u,\alpha](t)-E[0,\overline{\alpha}]|^{1-\theta}
   \sqrt{
   \frac{1}{\gamma}|\alpha_t(t)|^{2}
   +
   \frac{1}{\mu}
   \left\|
   \frac{u_t(t)}{\sqrt{1+u_x^2(t)}}
   \right\|_{L^2(0,1\,;\,\sqrt{1+u_{x}^{2}(t)}dx)}^2
   }
  \end{split}
 \end{equation} 
 where
 $\Cl{const:3.LS_with_mobility}=\Cr{const:LS_GBE}\frac{\sqrt{\min\{\gamma,\mu\}}}{2}>0$.

 From the gradient estimate \eqref{eq:3.Gradient_Estimate} and the
 Sobolev inequality \eqref{eq:LS.Sobolev_Embedding_Y}, we have
 \begin{equation}
  \sqrt{1+u_x^2(x,t)}
   \leq
   \frac{\sigma(\alpha(0))}{\Cr{const:GBE_positivity}}
   \sup_{0<x<1}
   (1+(u_0)^2_x(x))
   \leq
   \frac{\sigma(\alpha(0))}{\Cr{const:GBE_positivity}}
   (1+\|u_0\|_{H_{\mathrm{per.ave}}^{2}(0,1)})
   =:
   \Cl{const:3.GradientBounds}.
 \end{equation}
 Then, the normal velocity $\frac{u_t}{\sqrt{1+u_x^2}}$ can be estimated
 as
 \begin{equation}
  \left\|
   \frac{u_t(t)}{\sqrt{1+u_x^2(t)}}
  \right\|_{L^2(0,1\,;\,\sqrt{1+u_{x}^{2}(t)}dx)}^2
  \geq
  \frac{1}{\Cr{const:3.GradientBounds}}
  \|u_t(t)\|_{L^2(0,1)}^2.
 \end{equation}
 Therefore, we obtain from the energy dissipation law that
  \begin{equation}
   \label{eq:3.EnergyDissipationLaw_with_LS}
   \frac{d}{dt}E[u,\alpha](t)
   \leq
   -
   \Cr{const:3.LS_with_mobility}
   |E[u,\alpha](t)-E[0,\overline{\alpha}]|^{1-\theta}
   \sqrt{
   \frac{1}{\gamma}|\alpha_t(t)|^{2}
   +
   \frac{1}{\mu\Cr{const:3.GradientBounds}}
   \|u_t\|_{L^2(0,1)}^2
   }.
  \end{equation} 
 Now, we consider the derivative of
 $|E[u,\alpha](\tau)-E[0,\overline{\alpha}]|^{\theta}$ in terms of
 $\tau$. By \eqref{eq:3.EnergyDissipationLaw_with_LS}, we have
 \begin{equation}
  \label{eq:3.time-derivative_estimate}
  \begin{split}
   -\frac{d}{d\tau}
   |E[u,\alpha](\tau)-E[0,\overline{\alpha}]|^{\theta}
   &=
   -\theta
   |E[u,\alpha](\tau)-E[0,\overline{\alpha}]|^{\theta-1}
   \frac{d}{d\tau}E[u,\alpha](\tau)
   \\
   &\geq
   \Cr{const:3.LS_with_mobility}
   \theta
   \sqrt{
   \frac{1}{\gamma}|\alpha_t(\tau)|^{2}
   +
   \frac{1}{\mu\Cr{const:3.GradientBounds}}
   \|u_t(\tau)\|_{L^2(0,1)}^2
   }.
  \end{split} 
 \end{equation}
 Then, for $0<t<s<\infty$
 \begin{equation}
  \begin{split}
   |\alpha(t)-\alpha(s)|
   &\leq
   \int_{t}^{s}
   |\alpha_t(\tau)|
   \,d\tau
   \\
   &\leq
   -
   \frac{\sqrt{\gamma}}{\Cr{const:3.LS_with_mobility}\theta}
   \int^{s}_{t}
   \frac{d}{d\tau}
   |E[u,\alpha](\tau)-E[0,\overline{\alpha}]|^{\theta}
   \,d\tau
   \\
   &\leq
   \frac{\sqrt{\gamma}}{\Cr{const:3.LS_with_mobility}\theta}
   |E[u,\alpha](t)-E[0,\overline{\alpha}]|^{\theta}
  \end{split} 
\end{equation}
 and
 \begin{equation}
  \begin{split}
   \|u(t)-u(s)\|_{L^2(0,1)}
   &\leq
   \int^{s}_{t}
   \|u_t(\tau)\|_{L^2(0,1)}
   \,d\tau
   \\
   &\leq
   -
   \frac{\sqrt{\mu \Cr{const:3.GradientBounds}}}{\Cr{const:3.LS_with_mobility}\theta}
   \int^{s}_{t}
   \frac{d}{d\tau}
   |E[u,\alpha](\tau)-E[0,\overline{\alpha}]|^{\theta}
   \,d\tau
   \\
   &\leq
   \frac{\sqrt{\mu \Cr{const:3.GradientBounds}}}{\Cr{const:3.LS_with_mobility}\theta}
   |E[u,\alpha](t)-E[0,\overline{\alpha}]|^{\theta}
  \end{split} 
\end{equation}
 Thus, \eqref{eq:3.stability_Estimate} is deduced by taking
 $\Cr{const:3.stability}=\frac{\max\{\sqrt{\gamma}, \sqrt{\mu
 \Cr{const:3.GradientBounds}}\}}{\Cr{const:3.LS_with_mobility}\theta}$.
\end{proof}

\begin{remark}
 We emphasize that the assumption \eqref{eq:1.ConvexityAssumption} does
 not need to obtain the long-time asymptotic behavior of
 $\Gamma_t$. Furthermore, this argument may be extended to more
 complicated problems, such as involving other state variables. For
 example, for mathematical modelling of 2D grain boundary motion, it is
 not enough to study only misorientation
 (cf.\cites{doi:10.1016/j.actamat.2017.02.010,doi:10.1016/j.jmps.2018.05.001}). This
 research will give a fundamental contribution to studying the long-time
 asymptotic behavior of a mathematical model for grain boundary
 motion. On the other hand, as in \cite{MR4292952}, the energy method
 gives a finer asymptotics for $\Gamma_t$. Note in \cite{MR4292952} that
 they showed uniform convergence of the curvature of $\Gamma_t$ as
 $t\rightarrow\infty$.
\end{remark}

\begin{remark}
 In Theorem \ref{thm:GBM_stability}, we note the closeness of
 $(u,\alpha)$ to $(0,\overline{\alpha})$ in
 $H^2_{\mathrm{per.ave}(0,1)}\times\R$. We assume that there is a
 sequence $\{t_n\}_{n=1}^\infty$ such that $t_n\rightarrow\infty$ and
 $\alpha(t_n)\rightarrow\overline{\alpha}$ as $n\rightarrow\infty$. This
 assumption generally comes from the stability of
 $(0,\overline{\alpha})$ in the problem \eqref{eq:3.1.GeomEvolEq}.
 Furthermore, assume $E[u,\alpha](t)\rightarrow E[0,\overline{\alpha}]$
 as $t\rightarrow\infty$. This assumption is usually derived from the
 energy dissipation law~\eqref{eq:1.EnergyLaw}, or monotonicity of the
 energy $E$ along with the solution of \eqref{eq:3.1.GeomEvolEq}.

 We recall \cite{MR4292952}*{(5.3)}, or differentiate
 \eqref{eq:3.1.GeomEvolEq} with respect to $x$ that
 \begin{equation}
  u_{xt}
   =
   \frac{\mu\sigma(\alpha(t))}{1+u_x^2}u_{xxx}
   -
   \frac{2\mu\sigma(\alpha(t))}{(1+u_x^2)^2}u_xu_{xx}^2.
 \end{equation}
 Multiplying $u_x$ and using $(u_x^2)_{xx}=2u_{xx}^2+2u_{x}u_{xxx}$, we
 have
 \begin{equation}
  \left(
   \frac{1}{2}
   u_x^2\right)_{t}
  =
  \frac{\mu\sigma(\alpha(t))}{1+u_x^2}
  \left(
    \frac{1}{2}
    u_x^2\right)_{xx}
   -
   \frac{\mu\sigma(\alpha(t))}{1+u_x^2}
   u_{xx}^2 
  -
   \frac{2\mu\sigma(\alpha(t))}{(1+u_x^2)^2}
   u_x^2u^2_{xx}
   \leq
   \frac{\mu\sigma(\alpha(t))}{1+u_x^2}
   \left(
    \frac{1}{2}
    u_x^2\right)_{xx}.
 \end{equation}
 This yields that $\frac{1}{2}u_{x}^2$ is a subsolution of a parabolic
 equation, so we use the maximum principle that
 \begin{equation}
  \label{eq:3.GradientBounds_MP}
   u_x^2(x,t)\leq\sup_{x\in(0,1)}(u_0)^2_x(x)
 \end{equation}
 for $0<x<1$ and $t>0$. Then, we recall \cite{MR4292952}*{(5.8)} and use
 the Young inequality that
 \begin{equation}
  \label{eq:3.Energy_2ndDerivative}
  \begin{split}
   \frac{d}{dt}
   \int_0^1
   |u_{xx}(x,t)|^2\,dx
   &=
   -2\int_0^1
   \frac{\mu\sigma(\alpha)}{1+|u_x|^2}u^2_{xxx}\,dx
   -
   8\int_0^1
   \frac{\mu\sigma(\alpha)}{(1+|u_x|^2)^2}u_xu^2_{xx}u_{xxx}\,dx \\
   &\quad
   -
   4\int_0^1\frac{\mu\sigma(\alpha)}{(1+|u_x|^2)^2}u^4_{xx} \,dx
   +
   16\int_0^1
   \frac{\mu\sigma(\alpha)}{(1+|u_x|^2)^3}u^2_{x}u_{xx}^4\,dx
   \\
   &\leq
   -\int_0^1
   \frac{\mu\sigma(\alpha)}{1+|u_x|^2}u^2_{xxx}\,dx
   -
   4\int_0^1\frac{\mu\sigma(\alpha)}{(1+|u_x|^2)^2}u^4_{xx} \,dx
   \\
   &\quad
   +
   32\int_0^1
   \frac{\mu\sigma(\alpha)}{(1+|u_x|^2)^3}u^2_{x}u_{xx}^4\,dx.
  \end{split}
 \end{equation}
 Here, we used $1+|u_x|^2\geq1$. If
 $\sup_{x\in(0,1)}(u_0^2)_x(x)<\frac{1}{8}$, then using
 \eqref{eq:3.GradientBounds_MP} and $1+|u_x|^2\geq1$ to
 \eqref{eq:3.Energy_2ndDerivative} that
 \begin{equation}
  \frac{d}{dt}
   \int_0^1
   |u_{xx}(x,t)|^2\,dx
   \leq
   -\int_0^1
   \frac{\mu\sigma(\alpha)}{1+|u_x|^2}u^2_{xxx}\,dx.
 \end{equation}
 Therefore, $\|u\|_{H^2_{\mathrm{per.ave}}(0,1)}$ can be controlled by
 $\|u_0\|_{H^2_{\mathrm{per.ave}}(0,1)}$. We note that the system of the
 differential equations \eqref{eq:3.1.GeomEvolEq}, which we handle, is a
 one-dimensional, periodic boundary problem, and the equations of the
 derivative of the solution are simple. Thus, we can apply the above
 energy methods.

 To control the closeness of $\alpha$ to $\overline{\alpha}$, we choose
 $r>0$ such that
 \begin{equation}
  \{v\in H^2_{\mathrm{per.ave}}(0,1):\|v\|_{H^2_{\mathrm{per.ave}}(0,1)}<r\}
   \times
   (\overline{\alpha}-r, \overline{\alpha}+r)
   \subset U,
 \end{equation}
 where $U$ is stated in Theorem \ref{thm:Lojasiewicz-Simon}. Next, we
 take $t_N>0$ such that $\alpha(t_N)\in (\overline{\alpha}-\frac{r}{3},
 \overline{\alpha}+\frac{r}{3})$ and $\Cr{const:3.stability}
 |E[u,\alpha](t) - E[0,\overline{\alpha}]|^\theta<\frac{r}{3}$ for any
 $t> t_N$. Then, for $t>t_N$ with $\alpha(t)\in
 (\overline{\alpha}-r,\overline{\alpha}+r)$, we can use
 \eqref{eq:3.stability_Estimate} and
 \begin{equation}
  |\alpha(t)-\overline{\alpha}|
   \leq
   |\alpha(t)-\alpha(t_N)|
   +
   |\alpha(t_N)-\overline{\alpha}|
   \leq
   \Cr{const:3.stability}
   |E[u,\alpha](t) - E[0,\overline{\alpha}]|^\theta
   +
   |\alpha(t_N)-\overline{\alpha}|
   <\frac{2}{3}r,
 \end{equation}
 hence $\alpha(t)\in (\overline{\alpha}-r, \overline{\alpha}+r)$ for all
 $t>t_N$. This argument can be seen in \cite{MR4274456}*{p.37}.
\end{remark}

\subsection{A grain length estimate by the grain boundary energy}
The \L{}ojasiewicz-Simon inequality \eqref{eq:LS.LS_for_GBE}
gives a length estimate for any grain boundary, which is close to the
equilibrium state.

To state the theorem, we observe a periodic smooth function
$f:(0,1)\rightarrow\R$. By Rolle's theorem, there is $x_0\in(0,1)$ such
that $f_x(x_0)=0$. Thus, for $x\in(0,1)$
\begin{equation}
 |f_x(x)|^2
  =
  |f_x(x)-f_x(x_0)|^2
  =
  \left|
   \int_0^1 f_{xx}(\xi)\,d\xi
  \right|^2
  \leq
  \int_0^1 |f_{xx}(\xi)|^2\,d\xi
\end{equation}
hence we obtain
\begin{equation}
 \label{eq:leq3.Periodic_PoincareInequality}
  \int_0^1|f_x(x)|^2\,dx
  \leq
  \int_0^1|f_{xx}(x)|^2\,dx.
\end{equation}

Using \eqref{eq:leq3.Periodic_PoincareInequality}, we have the following
inequality.

\begin{theorem}
 \label{thm:3.LengthEstimate}
 Let $u:(0,1)\rightarrow\R$ be a smooth periodic function and
 $\alpha\in\R$.  Let $\overline{\alpha}\in\R$ be a critical point of
 $\sigma$, i.e. $\sigma'(\overline{\alpha})=0$. Then there exist
 $\frac{1}{2}<\gamma\leq1$, $\Cl{const:LengthEstimate}>0$, and $\delta>0$ such that if
 $\|u\|_{H_{\mathrm{per.ave}}^{2}(0,1)}<\delta$ and
 $|\alpha-\overline{\alpha}|<\delta$, then
 \begin{equation}
  \label{eq:3.Length_Estimate_by_GBE}
  \sigma(\alpha)
   L
   \leq
   \sigma(\overline{\alpha})
   +
   \Cr{const:LengthEstimate}
   \left(
   \sigma(\alpha)^2
   \left\|
    \left(
    \frac{u_x}{\sqrt{1+u_x^2}}
    \right)_x
   \right\|^2_{L^2(0,1)}
   +
   |
   \sigma'(\alpha)
   |^2
   L^2
   \right)^\gamma
   ,
 \end{equation}
 where $L$ is the length of the graph of $u$, namely
 \begin{equation}
  L=\int_0^1\sqrt{1+u_x^2}\,dx.
 \end{equation}
\end{theorem}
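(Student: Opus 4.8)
The plan is to deduce \eqref{eq:3.Length_Estimate_by_GBE} directly from the \L{}ojasiewicz-Simon inequality \eqref{eq:LS.LS_for_GBE}. First, I note that $(0,\overline{\alpha})$ is a critical point of $E$ lying in $Y=H^{2}_{\mathrm{per.ave}}(0,1)\times\R$: inserting $u=0$ into the explicit expression for $\dot{E}$ in Proposition \ref{prop:LS.Conti_diff_X} makes the first component vanish, while $\sigma'(\overline{\alpha})=0$ makes the second component vanish. Since \eqref{eq:1.Assumption_positivity} holds and $\sigma$ is analytic, Theorem \ref{thm:Lojasiewicz-Simon} applies at $(0,\overline{\alpha})$ and furnishes a neighborhood $U$ of $(0,\overline{\alpha})$ in $Y$ together with constants $0<\theta\leq\frac12$ and $\Cr{const:LS_GBE}>0$ such that $\|\dot{E}(u,\alpha)\|_{Y}\geq\Cr{const:LS_GBE}\,|E[u,\alpha]-\sigma(\overline{\alpha})|^{1-\theta}$ for $(u,\alpha)\in U$, where $E[0,\overline{\alpha}]=\sigma(\overline{\alpha})$. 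I then fix $\delta>0$ small enough that $\{\,v:\|v\|_{H^{2}_{\mathrm{per.ave}}(0,1)}<\delta\,\}\times(\overline{\alpha}-\delta,\overline{\alpha}+\delta)\subset U$, which is possible because $U$ is open and contains $(0,\overline{\alpha})$; and since $E$, $L$, and $\dot{E}$ depend on $u$ only through $u_{x}$, I may assume $\int_{0}^{1}u\,dx=0$, so that the smooth periodic $u$ of the statement belongs to $H^{2}_{\mathrm{per.ave}}(0,1)$.

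Next, I bound $\|\dot{E}(u,\alpha)\|_{Y}$ from above by the square root of the bracket in \eqref{eq:3.Length_Estimate_by_GBE}. Writing $g$ for the first component of $\dot{E}(u,\alpha)$, one has $g_{x}=\sigma(\alpha)\frac{u_{x}}{\sqrt{1+u_{x}^{2}}}$ and $g_{xx}=\sigma(\alpha)\bigl(\frac{u_{x}}{\sqrt{1+u_{x}^{2}}}\bigr)_{x}$, so by \eqref{eq:LS.def_InnerProduct_Y}
\begin{equation*}
 \|\dot{E}(u,\alpha)\|_{Y}^{2}=\|g_{x}\|_{L^{2}(0,1)}^{2}+\|g_{xx}\|_{L^{2}(0,1)}^{2}+|\sigma'(\alpha)|^{2}L^{2},
\end{equation*}
and here $\|g_{xx}\|_{L^{2}(0,1)}^{2}=\sigma(\alpha)^{2}\bigl\|\bigl(\tfrac{u_{x}}{\sqrt{1+u_{x}^{2}}}\bigr)_{x}\bigr\|_{L^{2}(0,1)}^{2}$, the first of the two quantities in the bracket. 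It remains to absorb $\|g_{x}\|_{L^{2}(0,1)}^{2}=\sigma(\alpha)^{2}\int_{0}^{1}\frac{u_{x}^{2}}{1+u_{x}^{2}}\,dx$ into it. For this I exploit periodicity of $u$: by Rolle's theorem $u_{x}$ vanishes at some $x_{0}\in(0,1)$, hence so does $w:=\frac{u_{x}}{\sqrt{1+u_{x}^{2}}}$, and the Rolle-plus-fundamental-theorem argument that yields \eqref{eq:leq3.Periodic_PoincareInequality} --- now applied with $(w,w_{x})$ in the role of $(f_{x},f_{xx})$, cf.\ the bound for $u_{x}/(1+u_{x}^{2})$ in the proof of Theorem \ref{thm:GBM_stability} --- gives $\sup_{(0,1)}|w|\leq\|w_{x}\|_{L^{2}(0,1)}$, whence $\|g_{x}\|_{L^{2}(0,1)}^{2}\leq\|g_{xx}\|_{L^{2}(0,1)}^{2}$ and therefore
\begin{equation*}
 \|\dot{E}(u,\alpha)\|_{Y}^{2}\leq 2\Bigl(\sigma(\alpha)^{2}\bigl\|\bigl(\tfrac{u_{x}}{\sqrt{1+u_{x}^{2}}}\bigr)_{x}\bigr\|_{L^{2}(0,1)}^{2}+|\sigma'(\alpha)|^{2}L^{2}\Bigr).
\end{equation*}

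Finally, combining the two steps: for $(u,\alpha)$ with $\|u\|_{H^{2}_{\mathrm{per.ave}}(0,1)}<\delta$ and $|\alpha-\overline{\alpha}|<\delta$ one has $(u,\alpha)\in U$, so using $E[u,\alpha]=\sigma(\alpha)L$,
\begin{equation*}
 \sigma(\alpha)L-\sigma(\overline{\alpha})\leq|E[u,\alpha]-\sigma(\overline{\alpha})|\leq\bigl(\Cr{const:LS_GBE}^{-1}\|\dot{E}(u,\alpha)\|_{Y}\bigr)^{1/(1-\theta)},
\end{equation*}
and substituting the bound on $\|\dot{E}(u,\alpha)\|_{Y}$ produces \eqref{eq:3.Length_Estimate_by_GBE} with $\gamma:=\frac{1}{2(1-\theta)}$ and $\Cr{const:LengthEstimate}:=\bigl(2\Cr{const:LS_GBE}^{-2}\bigr)^{\gamma}$; the constraint $0<\theta\leq\frac12$ forces exactly $\frac12<\gamma\leq1$. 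I expect the only step requiring a genuine idea to be the Poincaré-type estimate $\|g_{x}\|_{L^{2}}\leq\|g_{xx}\|_{L^{2}}$: one must notice that periodicity of $u$ supplies a zero of $u_{x}$ --- which is exactly the purpose of recording \eqref{eq:leq3.Periodic_PoincareInequality} beforehand --- so that $\|\dot{E}(u,\alpha)\|_{Y}$ is controlled by the two quantities actually appearing in the statement. The remaining points (verifying $(0,\overline{\alpha})$ is critical, applying Theorem \ref{thm:Lojasiewicz-Simon}, choosing $\delta$, and the exponent bookkeeping) are routine.
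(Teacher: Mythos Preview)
Your proposal is correct and follows essentially the same route as the paper's proof: apply Theorem~\ref{thm:Lojasiewicz-Simon} at the critical point $(0,\overline{\alpha})$, use the Rolle--Poincar\'e argument recorded in \eqref{eq:leq3.Periodic_PoincareInequality} (applied to $w=u_x/\sqrt{1+u_x^2}$, which vanishes where $u_x$ does) to absorb $\|g_x\|_{L^2}$ into $\|g_{xx}\|_{L^2}$, and set $\gamma=\tfrac{1}{2(1-\theta)}$. Your version is in fact slightly more explicit than the paper's --- you spell out why $(0,\overline{\alpha})$ is critical, how $\delta$ is chosen, and why one may assume $\int_0^1 u=0$ --- but the substance is identical.
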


The length estimate \eqref{eq:3.Length_Estimate_by_GBE} tells that the
length of a grain boundary can be estimated quantitatively by its
curvature and the $\sigma'(\alpha)$ if the grain boundary is close to
the equilibrium state. Note that the coefficient of $L^2$ on the
right-hand side, $|\sigma'(\alpha)|^2$ is close to $0$ since
$\sigma'(\overline{\alpha})=0$ and
$|\alpha-\overline{\alpha}|<\delta$. Further note that if $\sigma$ is a
constant, then $\sigma'=0$, so we have
\begin{equation}
 L - 1
  \leq
  \Cr{const:LengthEstimate}
  \left\|
   \left(
    \frac{u_x}{\sqrt{1+u_x^2}}
   \right)_x
  \right\|^{2\gamma}_{L^2(0,1)};
\end{equation}
thus, the length of the curve is estimated by the integral of its
curvature.

\begin{proof}[Proof of Theorem \ref{thm:3.LengthEstimate}]
 Since $\sigma'(\overline{\alpha})=0$, we can use the
 \L{}ojasiewicz-Simon inequality \eqref{eq:LS.LS_for_GBE}, and obtain
 \begin{multline}
  \label{eq:3.LS-L2form}
  |\sigma(\alpha)|^2
  \left(
  \left\|
  \frac{u_x}{\sqrt{1+u_x^2}}
  \right\|_{L^2(0,1)}^2
  +
  \left\|
  \left(
  \frac{u_x}{\sqrt{1+u_x^2}}
  \right)_x
  \right\|_{L^2(0,1)}^2
  \right)
  +
  |\sigma'(\alpha)|^2
  \left(
  \int_0^1\sqrt{1+u_x^2(x)}\,dx\right)^2
  \\
  \geq
  \Cr{const:LS_GBE}^2|E[u,\alpha]-E[0,\overline{\alpha}]|^{2(1-\theta)}.
 \end{multline}
 Note that
 \eqref{eq:leq3.Periodic_PoincareInequality} with $f=\sqrt{1+u_x^2}$
 that
 \begin{equation}
  \left\|
   \frac{u_x}{\sqrt{1+u_x^2}}
  \right\|^2_{L^2(0,1)}
  \leq
  \left\|
   \left(
    \frac{u_x}{\sqrt{1+u_x^2}}
   \right)_x
  \right\|^2_{L^2(0,1)}.
 \end{equation}
 Then, taking $\gamma=\frac{1}{2(1-\theta)}$ and using the defintion of
 $L$, we have
 \begin{equation}
  E[u,\alpha]-E[0,\overline{\alpha}]
   \leq
   \frac{1}{\Cr{const:LS_GBE}^{1-\theta}}
   \left(
    2|\sigma(\alpha)|^2
    \left\|
     \left(
      \frac{u_x}{\sqrt{1+u_x^2}}
     \right)_x
    \right\|_{L^2(0,1)}^2
    +
    |\sigma'(\alpha)|^2
    L^2
   \right)^{\gamma}
 \end{equation}
 Then we have \eqref{eq:3.Length_Estimate_by_GBE} by applying
 $E[u,\alpha]=\sigma(\alpha)L$ and
 $E[0,\overline{\alpha}]=\sigma(\overline{\alpha})$, where
 $\Cr{const:LengthEstimate}=\frac{2^\gamma}{\Cr{const:LS_GBE}^{1-\theta}}$.
\end{proof}

\section*{Acknowledgments}

The first author is supported by JSPS KAKENHI grants JP22K03376 and
JP23H00085. The third author is supported by JSPS KAKENHI grants
JP23K03180, JP23H00085, JP24K00531, and JP25K00918.

\begin{bibdiv}
\begin{biblist}

\bib{MR1986700}{article}{
      author={Chill, Ralph},
       title={On the {\L}ojasiewicz-{S}imon gradient inequality},
        date={2003},
        ISSN={0022-1236},
     journal={J. Funct. Anal.},
      volume={201},
      number={2},
       pages={572\ndash 601},
         url={http://dx.doi.org/10.1016/S0022-1236(02)00102-7},
      review={\MR{1986700}},
}

\bib{MR3501845}{article}{
      author={Dall'Acqua, Anna},
      author={Pozzi, Paola},
      author={Spener, Adrian},
       title={The {\L}ojasiewicz-{S}imon gradient inequality for open elastic
  curves},
        date={2016},
        ISSN={0022-0396,1090-2732},
     journal={J. Differential Equations},
      volume={261},
      number={3},
       pages={2168\ndash 2209},
         url={https://doi.org/10.1016/j.jde.2016.04.027},
      review={\MR{3501845}},
}

\bib{MR4283537}{article}{
      author={Epshteyn, Yekaterina},
      author={Liu, Chun},
      author={Mizuno, Masashi},
       title={Large time asymptotic behavior of grain boundaries motion with
  dynamic lattice misorientations and with triple junctions drag},
        date={2021},
        ISSN={1539-6746},
     journal={Commun. Math. Sci.},
      volume={19},
      number={5},
       pages={1403\ndash 1428},
         url={https://doi.org/10.4310/CMS.2021.v19.n5.a10},
      review={\MR{4283537}},
}

\bib{MR4263432}{article}{
      author={Epshteyn, Yekaterina},
      author={Liu, Chun},
      author={Mizuno, Masashi},
       title={Motion of {G}rain {B}oundaries with {D}ynamic {L}attice
  {M}isorientations and with {T}riple {J}unctions {D}rag},
        date={2021},
        ISSN={0036-1410},
     journal={SIAM J. Math. Anal.},
      volume={53},
      number={3},
       pages={3072\ndash 3097},
         url={https://doi.org/10.1137/19M1265855},
      review={\MR{4263432}},
}

\bib{MR1770892}{incollection}{
      author={Herring, Conyers},
       title={Surface tension as a motivation for sintering},
        date={1999},
   booktitle={Fundamental contributions to the continuum theory of evolving
  phase interfaces in solids},
   publisher={Springer, Berlin},
       pages={33\ndash 69},
      review={\MR{1770892}},
}

\bib{MR1833000}{article}{
      author={Kinderlehrer, David},
      author={Liu, Chun},
       title={Evolution of grain boundaries},
        date={2001},
        ISSN={0218-2025,1793-6314},
     journal={Math. Models Methods Appl. Sci.},
      volume={11},
      number={4},
       pages={713\ndash 729},
         url={https://doi.org/10.1142/S0218202501001069},
      review={\MR{1833000}},
}

\bib{MR4292952}{article}{
      author={Mizuno, Masashi},
      author={Takasao, Keisuke},
       title={A curve shortening equation with time-dependent mobility related
  to grain boundary motions},
        date={2021},
        ISSN={1463-9963,1463-9971},
     journal={Interfaces Free Bound.},
      volume={23},
      number={2},
       pages={169\ndash 190},
         url={https://doi.org/10.4171/ifb/453},
      review={\MR{4292952}},
}

\bib{MR0078836}{article}{
      author={Mullins, William~W.},
       title={Two-dimensional motion of idealized grain boundaries},
        date={1956},
        ISSN={0021-8979,1089-7550},
     journal={J. Appl. Phys.},
      volume={27},
       pages={900\ndash 904},
      review={\MR{78836}},
}

\bib{doi:10.1063/1.1722742}{article}{
      author={Mullins, William~W.},
       title={Theory of thermal grooving},
        date={1957},
     journal={Journal of Applied Physics},
      volume={28},
      number={3},
       pages={333\ndash 339},
  eprint={https://pubs.aip.org/aip/jap/article-pdf/28/3/333/18316412/333_1_online.pdf},
         url={https://doi.org/10.1063/1.1722742},
}

\bib{MR4753074}{article}{
      author={Pluda, Alessandra},
      author={Pozzetta, Marco},
       title={{\L}ojasiewicz-{S}imon inequalities for minimal networks:
  stability and convergence},
        date={2024},
        ISSN={0025-5831,1432-1807},
     journal={Math. Ann.},
      volume={389},
      number={3},
       pages={2729\ndash 2782},
         url={https://doi.org/10.1007/s00208-023-02714-7},
      review={\MR{4753074}},
}

\bib{MR0223930}{article}{
      author={Schechter, Martin},
       title={Basic theory of {F}redholm operators},
        date={1967},
        ISSN={0391-173X},
     journal={Ann. Scuola Norm. Sup. Pisa Cl. Sci. (3)},
      volume={21},
       pages={261\ndash 280},
      review={\MR{223930}},
}

\bib{MR4274456}{book}{
      author={Yagi, Atsushi},
       title={Abstract parabolic evolution equations and {\L}
  ojasiewicz-{S}imon inequality {I}---{A}bstract theory},
      series={SpringerBriefs in Mathematics},
   publisher={Springer, Singapore},
        date={[2021] \copyright 2021},
        ISBN={978-981-16-1895-6; 978-981-16-1896-3},
         url={https://doi.org/10.1007/978-981-16-1896-3},
      review={\MR{4274456}},
}

\bib{yang2025curvatureflownetworkstriple}{misc}{
      author={Yang, Yuchuan},
      author={Esedoglu, Selim},
       title={Curvature flow of networks with triple junction drag and grain
  rotation},
        date={2025},
         url={https://arxiv.org/abs/2509.06125},
}

\bib{doi:10.1016/j.jmps.2018.05.001}{article}{
      author={Zhang, Luchan},
      author={Xiang, Yang},
       title={Motion of grain boundaries incorporating dislocation structure},
        date={2018},
        ISSN={0022-5096},
     journal={Journal of the Mechanics and Physics of Solids},
      volume={117},
       pages={157\ndash 178},
  url={https://www.sciencedirect.com/science/article/pii/S0022509617308906},
}

\bib{doi:10.1016/j.actamat.2017.02.010}{article}{
      author={Zhao, Quan},
      author={Jiang, Wei},
      author={Srolovitz, David~J.},
      author={Bao, Weizhu},
       title={Triple junction drag effects during topological changes in the
  evolution of polycrystalline microstructures},
        date={2017},
        ISSN={1359-6454},
     journal={Acta Materialia},
      volume={128},
       pages={345\ndash 350},
  url={https://www.sciencedirect.com/science/article/pii/S1359645417301015},
}

\end{biblist}
\end{bibdiv}

\end{document}